\newcommand{\NN}{{\mathbb N}}
\def\bege{\begin{equation}} \def\ende{\end{equation}}
\def\begr{\begin{eqnarray}} \def\endr{\end{eqnarray}}
\newcommand{\TT}{{\mathbb T}}
\newcommand{\DD}{{\mathbb D}}
\def\B{\mathcal{B}}
\def\D{\mathbb{D}}
\def\a{\alpha}
\def\b{\beta}
\def\t{\theta}
\def\begr{\begin{eqnarray}} \def\endr{\end{eqnarray}}
\def\msk{\medskip}
\def\ol{\overline}
\newtheorem{Lemma}{Lemma}[section]
\newtheorem{Theorem}[Lemma]{Theorem}
\newtheorem{Proposition}[Lemma]{Proposition}
\newtheorem{Definition}[Lemma]{Definition}
\newtheorem{Remark}[Lemma]{Remark}
\newcounter{other}            
\begin{document}
	\title[Generalized integration operators]{Generalized integration operators on  analytic tent spaces}
	
	\author{Rong Yang, Lian Hu and Songxiao Li$^\dagger$  }
	
	\address{Rong Yang
	\\Institute of Fundamental and Frontier Sciences, University of Electronic Science and Technology of China, 610054, Chengdu, Sichuan, P.R. China.}
	\email{yangrong071428@163.com  }

\address{Lian Hu
	\\Institute of Fundamental and Frontier Sciences, University of Electronic Science and Technology of China, 610054, Chengdu, Sichuan, P.R. China.}
	\email{hl152808@163.com }

	\address{Songxiao Li\\ Department of Mathematics, Shantou University, 515063, Shantou, Guangdong, P.R. China. } \email{jyulsx@163.com}

	\subjclass[2010]{30H20, 47B10, 47B35}
	
	\begin{abstract} In this paper, the boundedness and compactness of generalized integration operators $T_g^{n,k}$ between different analytic tent spaces in the unit disc are completely characterized. 
		
		\thanks{$\dagger$ Corresponding author.}
		\vskip 3mm \noindent{\it Keywords}: Integration operator, tent space,  Hardy space, Bergman space.
	\end{abstract}
	
	\maketitle
	
\section{Introduction}
Let $\mathbb{D}$ denote the unit disc on the complex plane $\mathbb{C}$, $\mathbb{T}$ its boundary, and $H(\mathbb{D})$ be the set of
all analytic functions in $\mathbb{D}$. 
We denote by $\NN$ the set of positive integers.
The Hardy space $H^p(0<p<\infty)$ consists of all $f \in H(\mathbb{D})$ for which
	$$
	\|f\|_{H^p}^p=\sup _{0< r<1} \int_0^{2 \pi}|f(r e^{i \theta})|^p \frac{d\t}{2 \pi}<\infty.
	$$
 Let $0<p<\infty$ and $\alpha>-1$, the weighted Bergman space $A_\alpha^p$ consists of $f\in H(\D)$ such that
	$$\|f\|_{A_\alpha^p}^p=(\alpha+1) \int_{\mathbb{D}}|f(z)|^p(1-|z|^2)^\alpha d A(z)<\infty,$$ where $dA(z)=\frac{1}{\pi}dxdy$ is the normalized Lebesgue area measure on $\D$.

Let $\eta \in \mathbb{T}$ and $\zeta>\frac{1}{2}$. The non-tangential region $\Gamma_\zeta(\eta)$ is defined by
$\Gamma(\eta)=\Gamma_\zeta(\eta) =\{z \in \mathbb{D}:|z-\eta|<\zeta(1-|z|^2) \}.$	Let $0<p, q<\infty$ and $\a>-2$. The tent space $T_p^q(\alpha)$ consists of all measurable functions $f$ on $\mathbb{D}$ with
\begin{align*}
\|f\|_{T_p^q(\alpha)}=\left(\int_\mathbb{T}\left(\int_{\Gamma(\eta)}|f(z)|^p(1-|z|^2)^{\a} d A(z)\right)^{\frac{q}{p}}|d \eta|\right)^{\frac{1}{q}}<\infty .
\end{align*}
In particular, for $\a=0$, we write $T_p^q$ instead of $T_p^q(\a)$. 	
For $p=\infty$ and $0<q<\infty$, the tent space $T_{\infty}^q(\alpha)$ consists of all measurable functions $f$ on $\mathbb{D}$ such that
\begin{align*}
\|f\|_{T_\infty^q(\alpha)}=\left(\int_\mathbb{T} \left(\operatorname{esssup}_{z \in \Gamma(\eta)}|f(z)|\right)^q|d \eta|\right)^{\frac{1}{q}}<\infty. 
\end{align*}
Notice that the definition of $T^q_\infty(\alpha)=T^q_\infty$ is independent of $\a$. 
For $q=\infty$ and $0<p<\infty$, the tent space $T_p^{\infty}(\alpha)$ consists of all measurable functions $f$ on $\mathbb{D}$ for which
$$
\|f\|_{T_p^{\infty}(\alpha)}=\operatorname{esssup} _{\eta \in \mathbb{T}}\left(\sup _{u \in \Gamma(\eta)} \frac{1}{1-|u|^2} \int_{S(u)}|f(z)|^p(1-|z|^2)^{\a+1} d A(z)\right)^{\frac{1}{p}}<\infty,
$$
where
$S(r e^{i \theta})=\left\{\lambda e^{i t}: |t-\theta| \leq \frac{1-r}{2},1-\lambda \leq 1-r\right\}$ for $r e^{i \theta} \in \mathbb{D} \backslash\{0\}$ and $S(0)=\mathbb{D}$.
Denote $T_p^q(\alpha) \cap H(\mathbb{D})$ by $A T_p^q(\alpha)$.
	
Tent spaces were initially introduced by Coifman, Meyer and Stein  in \cite{cms} to study problems in harmonic analysis. They provided a  general framework for questions regarding some important spaces such as Hardy spaces and Bergman spaces. The aperture $\zeta  $ of the non-tangential region $\Gamma_\zeta(\eta)$   is suppressed in the above definition since   any two apertures generate the same function space with equivalent quasinorms.
By the non-tangential maximal function characterization of the Hardy space, $AT^q_\infty=H^q$, which can be see as the limit of $AT_p^q(\a)$ when $p\to\infty$ (see
\cite{z2}). In particular, $AT_p^p(\a-1)=A^p_\a$, the weighted Bergman space.

In \cite{ca1,ca2}, Carleson  characterized the positive Borel measures $\mu$ on the unit disc $\DD$ such that the identity  operator  $I_d:H^{p}  \to   L^{p}({d\mu})$ is bounded 
if and only if $\mu$ is a Carleson measure, 
that is,  
$\sup_{I\subset \mathbb{T}}\frac{\mu(S(I))}{|I|}<\infty$.
Here, $S(I)=\{ z\in\D:1-|I|\le|z|<1,\frac{z}{|z|}\in I  \}$.
In \cite{Dur}, Duren generalized Carleson's result and   descibed the boundedness of the identity  operator  $I_d:H^{p}  \to   L^{q}({d\mu})$ when $0<p\leq q<\infty$.
In \cite{l2}, Luecking investigated the positive Borel measure $\mu$ such that differential operator of order $m$ mapping $H^p$ to $L^q(d\mu)$ boundedly.
In \cite{l3}, Luecking characterized the positive Borel measure $\mu$ such that differential operator of order $m$ mapping the Bergman space $A^p$ and the tent space $AT_p^q(\a)$ to $L^q(d\mu)$ boundedly.
Subsequently, the Carleson measure was extended to higher dimensional cases and more general cases, and became a very important tool for the study of function space and operator theory.
See \cite{ag1,wz2} for more results of the differential operator of order $m$ mapping  the tent space $AT_p^q(\a)$ into $L^q(d\mu)$.

Let $g \in H(\mathbb{D})$, $k\in\NN\cup\{0\}$  and $n\in\NN$ such that $0 \leq k<n$. The generalized integration operator $T_g^{n, k}$ is defined by
$$
T_g^{n, k} f(z)=I^n\left(f^{(k)}(z) g^{(n-k)}(z)\right), \quad f \in H(\mathbb{D}).
$$
Here,  $I^n$ is the $n$-th iteration of the integration operator $I f(z)=\int_0^z f(t)dt$.
The operator $T_g^{n, k}$ was first introduced by Chalmoukis \cite{ch}. In particular, when $n=1$ and $k=0$, we have
$$
T_g^{1,0} f(z)=\int_0^z f(\eta) g^{\prime}(\eta) d \eta=T_g f(z) .
$$

In 1977, the Volterra integration operator $T_g$ was first introduced and stuided by Pommerenke in \cite{p}. It was proved that $T_g$ is bounded on $H^2$ if and only if $g\in BMOA$. 
In 1995, Aleman and Siskakis proved that $T_g$ is bounded  on $H^p$($p\ge1$) if and only if $g\in BMOA$ in \cite{as1}.
In 1997, they proved that $T_g$ is bounded  on $A^p$ if and only if $g\in\B$ in \cite{as2}.
Recall that the Bloch space $\mathcal{B}$ is the class of all $f \in H(\mathbb{D})$ with
$$
\|f\|_{\mathcal{B}}=|f(0)|+\sup _{z \in \mathbb{D}}(1-|z|^2)\left|f^{\prime}(z)\right|<\infty.
$$
See \cite{mppw,hyl,lly,llw,zz,w2} and the
references therein for more study of the operator $T_g$.
In \cite{ch},  Chalmoukis studied the boundedness of  the  operator $T_{g}^{n,k}$ between different Hardy spaces. For example, he showed that $T_g^{n,0}: H^p\to H^p$ is bounded if and only if $g\in BMOA$, $T_g^{n,k}: H^p\to H^p$ is bounded  if and only if $g\in\B$ when $k\geq 1$, while $T_g^{n,k}:H^p\to H^q$ is bounded  if and only if
$\sup\limits_{z\in\D}(1-|z|^2)^{\frac{1}{q}-\frac{1}{p}+n-k}|g^{(n-k)}(z)|<\infty$ when $0<p<q<\infty$. 
In \cite{dlq}, the authors characterized the boundedness, compactness and Schatten class  membership of  the operator $T_g^{n,k}$ on Bergman spaces induced by doubling weights.
From the result just mentioned, we see that $T_g^{n,k}$ has a very different behavior from that of $T_g$.

	Motivated by  \cite{ch,dlq,qz}, in this paper, we study the boundedness and compactness of the operator $T_g^{n,k}:AT_p^q(\a)\to AT_t^s(\b)$. The necessary and sufficient conditions of the operator $T_g^{n,k}$ to be boundedly and compactly are given.

	This paper is organzied as follows. In Section 2, we mainly provide some related properties of tent spaces. In Section 3 and Section 4, we describe the boundedness and compactness of $T_g^{n,k}:AT_p^q(\a)\to AT_t^s(\b)$, respectively.
	
	In this paper, the symbol $E\lesssim F$ means that there exists a positive finite constant $C$ such that $E\le CF$. The value of $C$ may change from one occurrence to another. The expression $E\asymp F$ means that both $E\lesssim F$ and $F\lesssim E$ hold.

	\section{Preliminaries}
	In this section, we state some notations and lemmas, which will be used in the proof of main results in this paper.

	\begin{Lemma}\cite[Lemma 4]{ar}\label{2.3}
		Let $0<p, q<\infty$ and $\lambda>\max \{1, \frac{p}{q}\}$. Then there are constants $C_1=C_1(p, q, \lambda, C)$ and $C_2=C_2(p, q, \lambda, C)$ such that
		$$
		C_1 \int_{\mathbb{T}} \mu(\Gamma(\eta))^{\frac{q}{p}}|d \eta| \leq \int_{\mathbb{T}}\left(\int_{\mathbb{D}}\left(\frac{1-|z|^2}{|1-z \bar{\eta}|}\right)^\lambda d\mu\right)^{\frac{q}{p}}|d \eta| \leq C_2 \int_{\mathbb{T}} \mu(\Gamma(\eta))^{\frac{q}{p}}|d \eta|
		$$
		for every positive measure $\mu$ on $\mathbb{D}$.
	\end{Lemma}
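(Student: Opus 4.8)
The plan is to prove the two-sided estimate by handling the lower and upper bounds separately, and in each case reducing to a pointwise comparison between the integral $\int_{\mathbb{D}}\bigl(\frac{1-|z|^2}{|1-z\bar\eta|}\bigr)^\lambda d\mu(z)$ and the ``tent mass'' $\mu(\Gamma(\eta))$, after which one integrates in $\eta$ and, for the harder direction, invokes a maximal-function argument.

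For the \emph{lower bound} I would argue pointwise in $\eta$: if $z\in\Gamma(\eta)$ then $|1-z\bar\eta|\asymp 1-|z|^2$ (this is the defining feature of the non-tangential region), so the integrand $\bigl(\frac{1-|z|^2}{|1-z\bar\eta|}\bigr)^\lambda$ is bounded below by a positive constant on $\Gamma(\eta)$. Restricting the integral over $\mathbb{D}$ to the subset $\Gamma(\eta)$ therefore gives $\int_{\mathbb{D}}\bigl(\frac{1-|z|^2}{|1-z\bar\eta|}\bigr)^\lambda d\mu \gtrsim \mu(\Gamma(\eta))$ for each $\eta$, and raising to the power $q/p$ and integrating $|d\eta|$ yields the left-hand inequality with no further work. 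Note this direction does not use the hypothesis $\lambda>\max\{1,p/q\}$.

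For the \emph{upper bound} the natural route is to dyadically decompose $\mathbb{D}$ relative to $\eta$: write $\mathbb{D}=\bigcup_{j\ge 0} R_j(\eta)$ where $R_j(\eta)$ is the ``annular'' Whitney-type region on which $|1-z\bar\eta|\asymp 2^{-j}$ (equivalently $z$ lies in the Carleson box $S(I_j)$ of arclength $|I_j|\asymp 2^{-j}$ over $\eta$, minus the next smaller one). On $R_j(\eta)$ the integrand is $\asymp (2^j(1-|z|^2))^\lambda \le 2^{-j(\text{something})}$ times a power, and more importantly $R_j(\eta)\subset S(I_j)$, which is contained in $\Gamma(\eta')$ for all $\eta'$ in an arc of length $\asymp 2^{-j}$ around $\eta$. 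Thus $\mu(R_j(\eta))$ is controlled by an average of $\mu(\Gamma(\eta'))$, i.e. by $2^{j}\int_{|\eta'-\eta|\lesssim 2^{-j}}\mu(\Gamma(\eta'))|d\eta'|$, which is $\lesssim \mathcal{M}\bigl(\eta\mapsto \mu(\Gamma(\eta))\bigr)(\eta)$, the Hardy--Littlewood maximal function. Summing the geometric series in $j$ (here the condition $\lambda>1$ guarantees convergence of the relevant series coming from the factor $(2^j(1-|z|^2))^\lambda$, since on $S(I_j)$ one has $1-|z|^2\lesssim 2^{-j}$) gives the pointwise bound $\int_{\mathbb{D}}\bigl(\frac{1-|z|^2}{|1-z\bar\eta|}\bigr)^\lambda d\mu \lesssim \mathcal{M}\bigl(\mu(\Gamma(\cdot))\bigr)(\eta)$. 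Raising to the power $q/p$, integrating, and applying the Hardy--Littlewood maximal theorem on $L^{q/p}(\mathbb{T})$ — which is exactly where the hypothesis $q/p>1$, i.e. $\lambda>p/q$ being moot but $q/p>1$ being essential, enters — finishes the upper bound. (If $q/p\le 1$ one would instead need the weak-type $(1,1)$ bound combined with a different summation; but under the stated hypothesis $\lambda>\max\{1,p/q\}$ we may choose the decomposition so that the strong maximal inequality applies.)

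The main obstacle is the upper bound, and specifically making the maximal-function domination rigorous: one must be careful that the region $S(I_j)$ over $\eta$ is genuinely contained in $\Gamma(\eta')$ for \emph{all} $\eta'$ in a comparable arc (this requires choosing the aperture of $\Gamma$ large enough, or equivalently using the aperture-independence of the tent-space norm remarked on in the introduction), and that the geometric summation in $j$ is uniform in $\eta$. Once the pointwise maximal bound is in place, the rest is the standard $L^{q/p}$ boundedness of $\mathcal{M}$, which is available precisely because $q/p>1$. I would present the argument as: (i) pointwise lower bound via $|1-z\bar\eta|\asymp 1-|z|^2$ on $\Gamma(\eta)$; (ii) Whitney decomposition and the containment $S(I_j)\subset\bigcap_{|\eta'-\eta|\lesssim 2^{-j}}\Gamma(\eta')$; (iii) geometric summation giving domination by $\mathcal{M}(\mu(\Gamma(\cdot)))$; (iv) integrate and apply the maximal theorem.
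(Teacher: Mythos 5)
The paper does not prove this lemma at all: it is quoted verbatim from Arsenovi\'c \cite[Lemma 4]{ar}, so there is no internal argument to compare against and I can only judge your proposal on its own terms. Your lower bound is correct and complete: on $\Gamma(\eta)$ one has $|1-z\bar\eta|\asymp 1-|z|^2$, so the integrand is bounded below there, and restricting the $\mu$-integral to $\Gamma(\eta)$ and then integrating in $\eta$ gives the left-hand inequality without any use of the hypothesis on $\lambda$.

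The upper bound as you have set it up has genuine gaps. First, the containment you rely on is false: the Carleson box $S(I_j)$ with $|I_j|\asymp 2^{-j}$ is \emph{not} contained in $\Gamma(\eta')$ for all $\eta'$ in an arc of length $\asymp 2^{-j}$ around $\eta$; a point $z\in S(I_j)$ with $1-|z|\ll 2^{-j}$ lies in $\Gamma(\eta')$ only for $\eta'$ in an arc of length $\asymp 1-|z|$, so $\mu(S(I_j))$ is not dominated by $2^{j}\int_{|\eta'-\eta|\lesssim 2^{-j}}\mu(\Gamma(\eta'))\,|d\eta'|$ (carrying out the Fubini argument honestly produces $\int_{\Gamma(\eta')}(1-|z|)^{-1}\,d\mu(z)$, a different and larger quantity). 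Second, your decomposition yields no geometric decay: on the region $|1-z\bar\eta|\asymp 2^{-j}$ one only has $1-|z|^2\lesssim 2^{-j}$, so $(2^{j}(1-|z|^2))^{\lambda}$ is merely bounded by a constant, not by a negative power of $2^{j}$, and the series over $j$ does not converge for the reason you give. The decay must instead come from decomposing in the \emph{ratio} $(1-|z|^2)/|1-z\bar\eta|$: writing $\int_{\mathbb{D}}\bigl(\tfrac{1-|z|^2}{|1-z\bar\eta|}\bigr)^{\lambda}d\mu=\lambda\int_{0}^{2}t^{\lambda-1}\mu(E_t(\eta))\,dt$ with $E_t(\eta)=\{z:1-|z|^2>t|1-z\bar\eta|\}\subset\Gamma_{1/t}(\eta)$ gives the pointwise bound $\sum_{j\ge 0}2^{-j\lambda}\mu(\Gamma_{2^{j}}(\eta))$, and one then needs a change-of-aperture estimate of the form $\|\mu(\Gamma_{\zeta}(\cdot))\|_{L^{s}(\mathbb{T})}\lesssim\zeta^{\max\{1,1/s\}}\|\mu(\Gamma(\cdot))\|_{L^{s}(\mathbb{T})}$ with $s=q/p$, after which Minkowski's inequality (for $s\ge1$) or the subadditivity of $t\mapsto t^{s}$ (for $s<1$) sums the series exactly when $\lambda>\max\{1,p/q\}$. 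Third, you have misread the hypothesis: the lemma assumes $\lambda>\max\{1,p/q\}$, not $q/p>1$, so the strong maximal theorem on $L^{q/p}(\mathbb{T})$ is unavailable when $q\le p$; that is precisely the case in which the condition $\lambda>p/q$ is needed, and your closing parenthetical does not supply an argument for it.
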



	\begin{Lemma}\label{Tfn}
	Let $0<p,q<\infty$, $\a>-2$ and $n\in\NN\cup\{0\}$. Then $f\in AT_p^q(\a)$ if and only if
	$$
	\int_{\mathbb{T}}\left(\int_{\Gamma{(\eta)}}|f^{(n)}(z)|^{p}(1-|z|^2)^{np+\a} dA(z)\right)^{\frac{q}{p}}|d\eta|<\infty.
	$$
	\end{Lemma}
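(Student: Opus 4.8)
The plan is to reduce the general statement to the case $n=1$ and then iterate, so the core of the argument is the equivalence
\[
f\in AT_p^q(\a) \iff \int_{\mathbb{T}}\left(\int_{\Gamma(\eta)}|f'(z)|^p(1-|z|^2)^{p+\a}\,dA(z)\right)^{\frac{q}{p}}|d\eta|<\infty.
\]
Indeed, once this is known, replacing $f$ by $f'$ and $\a$ by $p+\a$ gives the statement for $n=2$, and an obvious induction on $n$ finishes the proof; the base case $n=0$ is trivial. So I would state the reduction in one sentence and then concentrate entirely on the first-derivative equivalence.

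For the forward direction (controlling $f'$ by $f$), the standard device is the subharmonicity / Cauchy-type estimate: for $z\in\D$ one has $|f'(z)|^p \lesssim (1-|z|^2)^{-2-p}\int_{D(z,\rho)}|f(w)|^p\,dA(w)$ for a fixed Euclidean disc $D(z,\rho)$ of radius comparable to $1-|z|$ (here one uses $|f'(z)|\lesssim (1-|z|)^{-1}\sup_{D(z,\rho/2)}|f|$ together with the sub-mean-value property of $|f|^p$). Plugging this in and multiplying by $(1-|z|^2)^{p+\a}$, the weight $(1-|z|^2)^{p+\a}$ is comparable to $(1-|w|^2)^{\a}$ for $w\in D(z,\rho)$, and after an application of Fubini together with the fact that the enlarged cones $\Gamma(\eta)$ absorb the dilated Whitney discs (i.e. $w\in D(z,\rho)$ and $z\in\Gamma_\zeta(\eta)$ force $w\in\Gamma_{\zeta'}(\eta)$ for a slightly larger aperture $\zeta'$), one bounds the left side by $\|f\|_{T_p^q(\a)}^q$ up to a constant, using the aperture-independence of the tent-space quasinorm noted in the introduction.

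For the reverse direction (controlling $f$ by $f'$) one writes $f(z)=f(0)+\int_0^z f'(w)\,dw$ and estimates, for $z\in\Gamma(\eta)$, $|f(z)|\lesssim |f(0)| + (1-|z|)\sup_{D(z,\rho)}|f'| $, or more efficiently uses $|f(z)-f(0)|\lesssim \int_0^{|z|}|f'(tz/|z|)|\,dt$ together with a pointwise bound $|f'(w)|^p \lesssim (1-|w|^2)^{-2-p}\int_{D(w,\rho)}|f'(u)|^p(1-|u|^2)^{??}$—cleaner is to combine the subharmonic estimate with a radial integration. The contribution of the constant term $f(0)$ is harmless because $\int_{\mathbb{T}}\big(\int_{\Gamma(\eta)}(1-|z|^2)^{\a}\,dA\big)^{q/p}|d\eta|<\infty$ for $\a>-2$. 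The cleanest route here is actually to invoke Lemma \ref{2.3}: the condition on $f'$ says the measure $d\mu_{f'}(z)=|f'(z)|^p(1-|z|^2)^{p+\a}\,dA(z)$ has $\int_{\mathbb{T}}\mu_{f'}(\Gamma(\eta))^{q/p}|d\eta|<\infty$, equivalently (by Lemma \ref{2.3}) $\int_{\mathbb{T}}\big(\int_{\D}\big(\tfrac{1-|z|^2}{|1-z\bar\eta|}\big)^\lambda d\mu_{f'}\big)^{q/p}|d\eta|<\infty$, and the analogous reformulation for $f$; then one passes between the two integral quantities using the pointwise estimate relating $\int_{\D}(\cdots)d\mu_f$ and $\int_{\D}(\cdots)d\mu_{f'}$, which is a one-variable weighted estimate on the disc.

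The main obstacle I expect is the careful bookkeeping of apertures and Whitney-disc inclusions in the Fubini step: one must verify that dilating each point $z\in\Gamma(\eta)$ to its Bergman disc $D(z,\rho)$ keeps everything inside a cone of controlled (larger, but fixed) aperture, so that after swapping the order of integration the inner integral is genuinely of the form $\int_{\Gamma_{\zeta'}(\eta)}(\cdots)$ and the aperture-independence of $\|\cdot\|_{T^q_p(\a)}$ can be quoted. The case $p\ge q$ versus $p<q$ may require slightly different handling when estimating $\big(\int_{\Gamma(\eta)}\cdots\big)^{q/p}$ after Fubini (sub-additivity of $t\mapsto t^{q/p}$ when $q\le p$, Minkowski's integral inequality when $q\ge p$), but both are routine once the geometric inclusion is set up; there are no genuinely delicate analytic estimates beyond the standard sub-mean-value property of $|f|^p$ and $|f'|^p$ on Bergman discs.
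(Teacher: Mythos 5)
Your overall architecture (prove the $n=1$ equivalence, then induct by replacing $f$ with $f^{(n-1)}$ and $\a$ with $(n-1)p+\a$, which stays $>-2$) is sound, and it matches what the paper implicitly does. Note, however, that the paper does not prove the $n=1$ case at all: it simply quotes \cite[Theorem 2]{pa}, which asserts $\|f'(\cdot)(1-|\cdot|)\|_{T_p^q(\alpha)}\asymp\|f\|_{T_p^q(\alpha)}$ for analytic $f$. You are attempting to reprove that cited theorem, and your forward half (bounding the $f^{(1)}$-integral by the $f$-integral via the Cauchy estimate, the sub-mean-value property of $|f|^p$ on Bergman discs, Fubini, and aperture enlargement as in Lemma \ref{2.9}) is complete in outline and correct; the handling of $f(0)$ using $\a>-2$ is also right.

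The genuine gap is the reverse direction, i.e. the inequality
$\int_{\Gamma(\eta)}|f(z)-f(0)|^p(1-|z|^2)^{\a}\,dA(z)\lesssim\int_{\Gamma_{\zeta'}(\eta)}|f'(w)|^p(1-|w|^2)^{p+\a}\,dA(w)$
(or its $q/p$-integrated version). You list three candidate strategies without completing any of them. The radial-integration route $|f(z)-f(0)|\le\int_0^{|z|}|f'(tz/|z|)|\,dt$ followed by a weighted Hardy inequality works routinely only for $p\ge1$; for $0<p<1$ the triangle inequality goes the wrong way and one needs the subharmonicity of $|f'|^p$ in an essential, nontrivial way (this is exactly where \cite{pa}, and the Calder\'on/Luecking-type arguments it rests on, do real work). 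Your ``cleanest route'' via Lemma \ref{2.3} is not substantiated either: after rewriting both sides as $\int_{\D}\bigl(\tfrac{1-|z|^2}{|1-z\bar\eta|}\bigr)^{\lambda}d\mu$, passing from $d\mu_f$ to $d\mu_{f'}$ amounts to a Littlewood--Paley norm equivalence for a weighted Bergman space with the non-radial weight $\bigl(\tfrac{1-|z|^2}{|1-z\bar\eta|}\bigr)^{\lambda}(1-|z|^2)^{\a}$, uniformly in $\eta$; that is a theorem requiring a B\'ekoll\'e--Bonami-type verification, not ``a one-variable weighted estimate on the disc.'' As written, the proposal proves only one implication of the lemma; the other implication needs either a correct self-contained argument covering $0<p<1$ or, as the paper does, an explicit appeal to \cite[Theorem 2]{pa}.
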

	
	\begin{proof}
	Form \cite[Theorem 2]{pa}, we have
	$$
	\left\|f^{\prime}(\cdot)(1-|\cdot|)\right\|_{T_p^q(\alpha)} \asymp\|f\|_{T_p^q(\alpha)},\quad f \in H(\D),
	$$
	which get the desired result.
	\end{proof}

	\begin{Lemma}\cite[Lemma 2.3]{wz2}\label{le1}
	Let $0<p\le t<\infty$, $0<s\le q<\infty$, $\a>-2$, and $\b=\frac{t(\a+2)}{p}-2$. Then $AT_p^q(\a)\subset AT_t^s(\b)$.
	\end{Lemma}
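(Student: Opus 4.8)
The plan is to split the inclusion into two independent steps: first raise the inner exponent from $p$ to $t$ (this is where the relation $\b=\frac{t(\a+2)}{p}-2$ enters), then lower the outer exponent from $q$ to $s$.

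\emph{Step 1: $AT_p^q(\a)\subset AT_t^q(\b)$ with $\|f\|_{T_t^q(\b)}\lesssim\|f\|_{T_p^q(\a)}$.} Fix $f\in AT_p^q(\a)$. Since $|f|^p$ is subharmonic, the sub-mean value property over a pseudohyperbolic disc $D(z,r)$ of fixed radius $r\in(0,1)$, together with $|D(z,r)|\asymp(1-|z|^2)^2$ and $1-|w|^2\asymp 1-|z|^2$ for $w\in D(z,r)$, gives
$$
|f(z)|^p\lesssim\frac{1}{(1-|z|^2)^{\a+2}}\int_{D(z,r)}|f(w)|^p(1-|w|^2)^{\a}\,dA(w).
$$
Because $z\in\Gamma_\zeta(\eta)$ implies $D(z,r)\subset\Gamma_{\zeta'}(\eta)$ for some $\zeta'=\zeta'(\zeta,r)>\zeta$, we obtain, for all $z\in\Gamma_\zeta(\eta)$,
$$
|f(z)|^p\lesssim\frac{A(\eta)}{(1-|z|^2)^{\a+2}},\qquad A(\eta):=\int_{\Gamma_{\zeta'}(\eta)}|f(w)|^p(1-|w|^2)^{\a}\,dA(w).
$$
Writing $|f(z)|^t=|f(z)|^{t-p}|f(z)|^p$ (note $t-p\ge 0$) and inserting the bound into the factor $|f(z)|^{t-p}$,
$$
\int_{\Gamma_\zeta(\eta)}|f(z)|^t(1-|z|^2)^{\b}\,dA(z)\lesssim A(\eta)^{\frac{t-p}{p}}\int_{\Gamma_\zeta(\eta)}|f(z)|^p(1-|z|^2)^{\b-(\a+2)\frac{t-p}{p}}\,dA(z).
$$
The choice $\b=\frac{t(\a+2)}{p}-2$ is exactly what forces $\b-(\a+2)\frac{t-p}{p}=\a$, so the remaining integral is at most $A(\eta)$, hence
$$
\int_{\Gamma_\zeta(\eta)}|f(z)|^t(1-|z|^2)^{\b}\,dA(z)\lesssim A(\eta)^{t/p}.
$$
Raising to the power $q/t$ and integrating over $\eta\in\mathbb{T}$ gives $\|f\|_{T_t^q(\b)}^q\lesssim\int_\mathbb{T}A(\eta)^{q/p}|d\eta|\asymp\|f\|_{T_p^q(\a)}^q$, the last step by the independence of the tent quasinorm from the aperture (recorded in the Introduction). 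Also $\b>-2$ since $\b+2=\frac{t}{p}(\a+2)>0$, so $AT_t^q(\b)$ is well defined.

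\emph{Step 2: $AT_t^q(\b)\subset AT_t^s(\b)$ for $s\le q<\infty$.} Since $\mathbb{T}$ has finite measure, applying Jensen's (equivalently Hölder's) inequality to the outer integral, with $\phi(\eta)=\big(\int_{\Gamma(\eta)}|f(z)|^t(1-|z|^2)^{\b}\,dA(z)\big)^{1/t}$, yields
$$
\Big(\int_\mathbb{T}\phi(\eta)^s|d\eta|\Big)^{1/s}\le (2\pi)^{\frac1s-\frac1q}\Big(\int_\mathbb{T}\phi(\eta)^q|d\eta|\Big)^{1/q},
$$
that is, $\|f\|_{T_t^s(\b)}\le(2\pi)^{\frac1s-\frac1q}\|f\|_{T_t^q(\b)}$. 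Chaining Steps 1 and 2 gives $\|f\|_{T_t^s(\b)}\lesssim\|f\|_{T_p^q(\a)}$, which establishes the inclusion. The routine parts are the geometry of pseudohyperbolic discs inside cones and the bookkeeping of apertures; the only genuinely delicate point is that the enlarged-aperture average $A(\eta)$ produced by the pointwise estimate must be reabsorbed with no loss, which is precisely where aperture-independence is invoked. When $t=p$ Step 1 is vacuous ($\b=\a$) and when $q=s$ Step 2 is vacuous, so the general case follows.
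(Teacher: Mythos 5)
Your proof is correct. Note, however, that the paper does not prove this statement at all: it is imported verbatim as \cite[Lemma 2.3]{wz2}, so there is no in-paper argument to compare against. What you have supplied is a complete, self-contained and elementary proof, and it is the standard one for this kind of embedding: Step 1 uses the local sub-mean value estimate $|f(z)|^p\lesssim(1-|z|^2)^{-\a-2}\int_{D(z,r)}|f|^p(1-|w|^2)^\a\,dA(w)$ (the same estimate the paper invokes from \cite[Proposition 4.13]{z1} in its Lemma \ref{fn}) together with the aperture-enlargement fact $D(z,r)\subset\Gamma_{\zeta'}(\eta)$ for $z\in\Gamma_\zeta(\eta)$ (the paper's Lemma \ref{2.9}), and your exponent bookkeeping checks out: $\b-(\a+2)\tfrac{t-p}{p}=\a$ is exactly the stated relation, and the reabsorption of the wider-aperture average $A(\eta)$ is legitimate by aperture-independence of the quasinorm, which the paper records in the introduction and which is quantified by its Lemma \ref{2.3}. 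Step 2 is just H\"older on the probability-like measure space $\mathbb{T}$ and is correct. Two minor points worth making explicit if you write this up: the local estimate is valid for every real $\a$ (not just $\a>-1$) precisely because $1-|w|^2\asymp1-|z|^2$ on $D(z,r)$, which you implicitly use and which is needed here since only $\a>-2$ is assumed; and $A(\eta)<\infty$ for a.e.\ $\eta$ (so that the splitting $|f|^t=|f|^{t-p}|f|^p$ yields a finite bound) follows from $\int_{\mathbb{T}}A(\eta)^{q/p}|d\eta|<\infty$, i.e.\ from $f\in T_p^q(\a)$ with the enlarged aperture.
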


	\begin{Lemma}\cite[Lemma 2.4]{wz2}\label{le2}
	If $0<q<s<\infty$, $0<p<\infty$, $\a>-2$, then $AT_p^q(\a)\subset A_\delta^s$ with bounded inclusion, where $\delta=\frac{s(\a+2)}{p}+\frac{s}{q}-2$.
	\end{Lemma}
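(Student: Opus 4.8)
The plan is to deduce the embedding from Lemma \ref{le1} together with the identification $AT_p^p(\gamma-1)=A^p_\gamma$ recorded in the introduction. First I would enlarge the first tent-space index: since $q<s$ and $q\le q$, Lemma \ref{le1} (applied with the roles $0<p\le p$, $0<q\le ?$ reversed appropriately — more precisely, with second indices $q$ and $s$ and first indices $p$ and $p$) is not quite in the stated form, so instead I would proceed in two moves. Move one: pass from $AT_p^q(\a)$ to a diagonal tent space $AT_r^r(\cdot)$ for a suitable $r$ using the easy nesting $T^q_p(\a)\subset T^s_p(\a)$ when $q\le s$? This is false in general for the second index, so that is not the route either.

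The correct route is the direct one. I would fix $f\in AT_p^q(\a)$ and, using Lemma \ref{Tfn} if needed to move derivatives around, reduce to controlling $\|f\|_{A^s_\delta}$. Write the Bergman norm as an integral over $\DD$ and use the standard ``tent decomposition'' trick: integrating in polar-type coordinates and applying the pointwise estimate that for $z\in\DD$ one has $z\in\Gamma(\eta)$ for $\eta$ in an arc of length $\asymp(1-|z|^2)$ on $\TT$. Concretely, for any nonnegative measurable $h$ on $\DD$,
\begin{equation*}
\int_{\DD}h(z)\,(1-|z|^2)^{-1}\,dA(z)\asymp\int_{\TT}\int_{\Gamma(\eta)}h(z)\,(1-|z|^2)^{-2}\,dA(z)\,|d\eta|.
\end{equation*}
Applying this with $h(z)=|f(z)|^s(1-|z|^2)^{\delta+1}$ turns $\|f\|_{A^s_\delta}^s$ into $\int_\TT F(\eta)\,|d\eta|$ where $F(\eta)=\int_{\Gamma(\eta)}|f(z)|^s(1-|z|^2)^{\delta-1}dA(z)$. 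Now inside each cone $\Gamma(\eta)$ I would invoke the elementary inclusion $L^p\big((1-|z|^2)^{\a}dA\big)\subset L^s\big((1-|z|^2)^{\sigma}dA\big)$ on the cone — valid because $p<s$ and a cone has finite measure against the relevant weight — to bound $F(\eta)^{p/s}$ by a constant times $\big(\int_{\Gamma(\eta)}|f(z)|^p(1-|z|^2)^{\a}dA(z)\big)$, with the exponent bookkeeping forcing exactly $\delta=\frac{s(\a+2)}{p}+\frac{s}{q}-2$; here the extra $\frac{s}{q}$ accounts for the Hölder step in the outer $|d\eta|$ integral when passing from the $q$-integrability of $\eta\mapsto\big(\int_{\Gamma(\eta)}|f|^p(1-|z|^2)^\a dA\big)^{1/p}$ to the $1$-integrability of $F$.

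The main obstacle is the exponent accounting: one must track how the cone-measure constants depend on $|z|$ (they are uniformly bounded since all cones are congruent up to a Möbius change), and then run a Hölder inequality in the $\eta$ variable with exponents $\frac{s}{q}\cdot\frac{q}{s}$ so that the $T_p^q(\a)$-norm appears on the right. Getting the weight exponent to land on $\delta-1$ after the inner $L^p\to L^s$ step, and simultaneously matching the outer integrability, is where the precise value of $\delta$ is pinned down; everything else is the standard Fubini/tent manipulation used already in \cite{wz2}. Once the inequality $\|f\|_{A^s_\delta}\lesssim\|f\|_{AT_p^q(\a)}$ is established, boundedness of the inclusion is immediate, completing the proof.
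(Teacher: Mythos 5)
First, a point of reference: the paper does not prove this lemma at all --- it is quoted verbatim from \cite[Lemma 2.4]{wz2} --- so there is no in-paper proof to match. The case $p=\infty$ is the classical Hardy--Littlewood embedding $H^q\subset A^s_{s/q-2}$ for $q<s$, and the proof in the cited source (like every proof of that classical fact) goes through a level-set/distribution-function argument over tents, not through H\"older. Your proposal, as written, has two steps that point in the wrong direction. (i) The ``elementary inclusion $L^p\big((1-|z|^2)^{\a}dA\big)\subset L^s\big((1-|z|^2)^{\sigma}dA\big)$ on the cone, valid because $p<s$ and the cone has finite measure'' is backwards: on a finite measure space one has $L^s\subset L^p$ for $p<s$, and $L^p\not\subset L^s$ (consider $x^{-1/s}$ on $[0,1]$). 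For analytic $f$ the correct substitute is the subharmonicity/maximal estimate $\sup_{z\in\Gamma(\eta)}|f(z)|^p(1-|z|^2)^{\a+2}\lesssim\int_{\widetilde{\Gamma}(\eta)}|f(w)|^p(1-|w|^2)^{\a}dA(w)$, which is a different mechanism (and enlarges the aperture); note also that the lemma only assumes $q<s$, so $p\geq s$ is permitted and $p<s$ cannot be used. (ii) Even after that repair, the best cone-wise bound you can produce is $F(\eta)\lesssim A_+(\eta)^{s/p}$ with $A_+(\eta)=\int_{\widetilde{\Gamma}(\eta)}|f|^p(1-|w|^2)^{\a}dA(w)$, and your ``H\"older step in the outer $|d\eta|$ integral'' would then need $\int_{\TT}A_+^{s/p}\,|d\eta|\lesssim\big(\int_{\TT}A_+^{q/p}\,|d\eta|\big)^{s/q}$. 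Since $s/q>1$, H\"older on the probability space $\TT$ gives exactly the \emph{reverse} inequality, and the one you need is false for general nonnegative $A_+$. So the proposal does not close.

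The missing idea is how to spend the weight gain $(1-|z|^2)^{s/q-1}$ that survives inside the cone; a plain H\"older in $\eta$ cannot see it. The standard device is: set $u(\eta)=A_+(\eta)$ and $v(z)=\inf\{u(\eta):z\in\Gamma(\eta)\}$, so that $|f(z)|^s(1-|z|^2)^{\delta}\lesssim v(z)^{s/p}(1-|z|^2)^{\frac{s}{q}-2}$ by the maximal estimate above. Then $\{z:v(z)>\lambda\}$ is contained in the tent over $E_\lambda=\{\eta:u(\eta)>\lambda\}$, and one uses $\int_{T(E_\lambda)}(1-|z|^2)^{\frac{s}{q}-2}dA\lesssim\sum_j|I_j|^{s/q}\leq|E_\lambda|^{s/q}$ (this is where $s/q\geq1$ enters), followed by Chebyshev in the form $|E_\lambda|^{\frac{s}{q}-1}\leq\lambda^{-\frac{s-q}{p}}\big(\int_{\TT}u^{q/p}\big)^{\frac{s-q}{q}}$ and integration in $\lambda$, which yields precisely $\big(\int_{\TT}u^{q/p}\big)^{s/q}\asymp\|f\|_{AT_p^q(\a)}^{s}$. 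Your Fubini identity in the displayed equation is fine and your exponent $\delta$ is the right one, but the analytic core of the lemma is this level-set argument (or an equivalent good-$\lambda$/maximal-function scheme), which the proposal replaces by two inequalities that do not hold.
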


	Let $\rho(z,w)=\left| \frac{z-w}{1-\ol{w}z} \right|$ and $\beta(z, w)$ be the hyperbolic metric on $\mathbb{D}$, that is,  
	$	\b(z,w)=\frac{1}{2}\log\frac{1+\rho(z,w)}{1-\rho(z,w)}$, $z,w\in\D.$
If $\beta(z, w)<r$ for any $z, w \in \mathbb{D}$ , then $ 1-|w| \asymp 1-|z|$.
Let $D(z, r)=\{w \in \mathbb{D}: \beta(z, w)<r\}$ be the hyperbolic disc of radius $r>0$ centered at $z \in \mathbb{D}$. 
The following lemma is very important in this paper and will be used frequently.
	
\begin{Lemma}\cite[Lemma 2.3]{w1}\label{2.9}
Let $\zeta>1, r \geq 0$ and $\eta \in \mathbb{T}$. If $\zeta_+=(\zeta+1) e^{2 r}-1$, then
$$D(z, r) \subset \Gamma_{\zeta_+}(\eta)$$ for all $z \in \Gamma_{\zeta}(\eta)$.
\end{Lemma}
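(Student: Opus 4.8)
The plan is to argue directly and elementarily. I would fix $z\in\Gamma_\zeta(\eta)$ and an arbitrary point $w\in D(z,r)$, set $t=\tanh r\in[0,1)$, and show that $|w-\eta|<\zeta_+(1-|w|^2)$ with $\zeta_+=(\zeta+1)e^{2r}-1$. A convenient preliminary step is to express the target constant through $t$: since $\tanh r=\frac{e^{2r}-1}{e^{2r}+1}$, one has $e^{2r}=\frac{1+t}{1-t}$, and hence
$$\zeta_+=(\zeta+1)\frac{1+t}{1-t}-1=\frac{\zeta(1+t)+2t}{1-t}.$$
Writing $\rho=\rho(z,w)=\left|\frac{z-w}{1-\overline{w}z}\right|$, the hypothesis $w\in D(z,r)$, i.e.\ $\beta(z,w)<r$, is exactly the condition $\rho<t$.

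The core of the proof is two comparison estimates for points of a hyperbolic disc, which I would derive from scratch. First, the identity $1-\overline{w}z=(1-|w|^2)+\overline{w}(w-z)$ gives $|1-\overline{w}z|\le(1-|w|^2)+|w-z|$; since $|w-z|=\rho\,|1-\overline{w}z|$, this yields $|1-\overline{w}z|\le\frac{1-|w|^2}{1-\rho}$ and therefore $|w-z|\le\frac{\rho}{1-\rho}(1-|w|^2)$. Second, using the standard identity $|1-\overline{w}z|^2-|z-w|^2=(1-|z|^2)(1-|w|^2)$, equivalently $(1-\rho^2)|1-\overline{w}z|^2=(1-|z|^2)(1-|w|^2)$, together with the bound on $|1-\overline{w}z|$ just obtained, I get $1-|z|^2\le\frac{1+\rho}{1-\rho}(1-|w|^2)$.

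It then remains to combine these with the triangle inequality and the hypothesis $|z-\eta|<\zeta(1-|z|^2)$:
$$|w-\eta|\le|w-z|+|z-\eta|<\frac{\rho}{1-\rho}(1-|w|^2)+\zeta\,\frac{1+\rho}{1-\rho}(1-|w|^2)=\frac{\rho+\zeta(1+\rho)}{1-\rho}(1-|w|^2).$$
Since $\rho\mapsto\frac{\rho+\zeta(1+\rho)}{1-\rho}$ is increasing on $[0,1)$ and $\rho<t$, the last quantity is at most $\frac{t+\zeta(1+t)}{1-t}(1-|w|^2)$, and finally $\frac{t+\zeta(1+t)}{1-t}\le\frac{2t+\zeta(1+t)}{1-t}=\zeta_+$ because $t\ge0$. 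Hence $|w-\eta|<\zeta_+(1-|w|^2)$, i.e.\ $w\in\Gamma_{\zeta_+}(\eta)$; since $w\in D(z,r)$ was arbitrary, $D(z,r)\subset\Gamma_{\zeta_+}(\eta)$.

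I do not expect a serious obstacle here. The points that need care are (i) keeping every estimate expressed in terms of $1-|w|^2$ rather than $1-|z|^2$, so that the two hyperbolic-disc comparison inequalities come out with the sharp factors $\frac{1\pm\rho}{1\mp\rho}$; and (ii) the bookkeeping of the conversion $\tanh r\leftrightarrow e^{2r}$, which is precisely what makes the crude sum of the two contributions collapse to $(\zeta+1)e^{2r}-1$ --- in fact with a bit of slack, so that the hypothesis $\zeta>1$ is not actually used.
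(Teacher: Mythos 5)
Your argument is correct and complete: the two comparison estimates $|w-z|\le\frac{\rho}{1-\rho}(1-|w|^2)$ and $1-|z|^2\le\frac{1+\rho}{1-\rho}(1-|w|^2)$ are derived correctly from the identity $1-\overline{w}z=(1-|w|^2)+\overline{w}(w-z)$ and from $(1-\rho^2)|1-\overline{w}z|^2=(1-|z|^2)(1-|w|^2)$, and the algebra converting $\tanh r$ to $e^{2r}$ does yield $\frac{\zeta(1+t)+2t}{1-t}=\zeta_+$, so the triangle inequality closes the proof. The paper gives no proof of this lemma, citing it to Wu's paper instead; your self-contained elementary derivation is essentially the standard argument given there, and your observation that $\zeta>1$ is never actually needed (only $\zeta>0$) is accurate.
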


\begin{Lemma}\label{fn}
Let $0<p,q<\infty$, $\a>-2$, $f\in AT_p^q(\a)$ and $n\in\NN\cup\{0\}$. Then 
$$
|f^{(n)}(z)|\lesssim\frac{\|f\|_{AT_p^q(\a)}}{(1-|z|^2)^{\frac{\a+2}{p}+\frac{1}{q}+n}},\quad z\in\D.
$$
\end{Lemma}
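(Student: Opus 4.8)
The plan is to deduce this growth estimate from the classical pointwise bound for weighted Bergman spaces, using the embedding in Lemma~\ref{le2}.

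First I reduce to a Bergman space. Fix any $s$ with $q<s<\infty$ and put $\delta=\frac{s(\a+2)}{p}+\frac{s}{q}-2$. Since $\a>-2$ and $s>q$, we have $\delta+1=\frac{s(\a+2)}{p}+\big(\tfrac{s}{q}-1\big)>0$, so $A_\delta^s$ is a genuine weighted Bergman space, and Lemma~\ref{le2} gives $f\in A_\delta^s$ with $\|f\|_{A_\delta^s}\lesssim\|f\|_{AT_p^q(\a)}$. A direct computation also shows $\frac{\delta+2}{s}=\frac{\a+2}{p}+\frac1q$. Hence it suffices to establish the standard estimate
\[
|g^{(n)}(z)|\lesssim\frac{\|g\|_{A_\delta^s}}{(1-|z|^2)^{\frac{\delta+2}{s}+n}},\qquad g\in A_\delta^s,\ z\in\D,
\]
and then apply it with $g=f$.

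For $n=0$ this follows from subharmonicity of $|g|^s$ and the sub-mean value inequality over the Euclidean disc $\{|\xi-z|<\frac{1-|z|}{2}\}$, on which $1-|\xi|\asymp 1-|z|$ (cf.\ the relation recorded just before Lemma~\ref{2.9}); this turns $\int_{\{|\xi-z|<(1-|z|)/2\}}|g|^s\,dA$ into a quantity $\lesssim(1-|z|^2)^{-\delta}\|g\|_{A_\delta^s}^s$, whence $|g(z)|^s\lesssim(1-|z|^2)^{-\delta-2}\|g\|_{A_\delta^s}^s$. For $n\ge1$ one applies the Cauchy formula $g^{(n)}(z)=\frac{n!}{2\pi i}\int_{|\xi-z|=(1-|z|)/2}\frac{g(\xi)}{(\xi-z)^{n+1}}\,d\xi$, bounds $|g(\xi)|$ on the contour by the $n=0$ estimate (again $1-|\xi|\asymp 1-|z|$ there), and picks up the extra factor $(1-|z|^2)^{-n}$ from the denominator and the length of the contour.

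There is no real obstacle: the only points needing attention are checking $\delta>-1$ (so the Bergman space is defined) and the exponent bookkeeping $\frac{\delta+2}{s}=\frac{\a+2}{p}+\frac1q$. I remark that one can instead argue in a self-contained way without Lemma~\ref{le2}: by the same Cauchy-plus-subharmonicity reasoning one gets $|f^{(n)}(z)|^p\lesssim(1-|z|^2)^{-np-2-\a}\int_{D(z,r)}|f(w)|^p(1-|w|^2)^\a\,dA(w)$; by Lemma~\ref{2.9}, $D(z,r)\subset\Gamma_{\zeta_+}(\eta)$ with $\eta=z/|z|$, and since $1-|w|\asymp 1-|z|$ on $D(z,r)$ one in fact has $D(z,r)\subset\Gamma_{\zeta'}(\eta')$ for every $\eta'$ on the arc $\{|\eta'-\eta|<c(1-|z|)\}$; integrating the $\frac{q}{p}$-th power of this bound over that arc, whose length is $\asymp 1-|z|$, against $|d\eta'|$ brings in $\|f\|_{T_p^q(\a)}^q$ and supplies the missing power $(1-|z|^2)^{-p/q}$. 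This truncated-tent inclusion is the step of that alternative route requiring the most care.
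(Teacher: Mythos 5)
Your proposal is correct, but your main argument takes a genuinely different route from the paper's. The paper works directly in the tent space: it applies the subharmonicity estimate \cite[Proposition 4.13]{z1} to $f^{(n)}$ on a hyperbolic disc $D(z,r)$, notes via Lemma~\ref{2.9} that $D(z,r)\subset\Gamma_{\zeta_+}(\eta)$ for every $\eta$ in an arc of length $\asymp 1-|z|$, and averages the $\tfrac{q}{p}$-th power of the local estimate over that arc to produce the factor $(1-|z|^2)^{-1/q}$ and the norm $\|f\|_{T_p^q(\alpha)}$ (invoking Lemma~\ref{Tfn} to pass from $f^{(n)}$ with weight $np+\alpha$ back to $f$). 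This is exactly the ``self-contained alternative'' you sketch in your closing remark, so you have in effect identified the paper's proof as a secondary option. Your primary route --- choose $s>q$, embed $AT_p^q(\alpha)$ into $A_\delta^s$ with $\delta=\tfrac{s(\alpha+2)}{p}+\tfrac{s}{q}-2$ by Lemma~\ref{le2}, verify $\delta>-1$ and $\tfrac{\delta+2}{s}=\tfrac{\alpha+2}{p}+\tfrac1q$, and then invoke the classical pointwise growth estimate for $A_\delta^s$ (sub-mean-value inequality for $n=0$, Cauchy's formula for $n\ge1$) --- is clean and the exponent bookkeeping checks out. What it buys is a shorter argument that delegates all tent-space work to the already-quoted embedding Lemma~\ref{le2}; what the paper's direct argument buys is independence from that embedding and a proof pattern (local estimate plus averaging over a boundary arc) that is reused throughout the paper. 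Either proof is acceptable.
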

\begin{proof}
According to \cite[Proposition 4.13]{z1}, we have
$$
|f^{(n)}(z)|^p\lesssim\frac{1}{(1-|z|^2)^{\a+2}}\int_{D(z,r)}|f^{(n)}(w)|^p(1-|w|^2)^\a dA(w).
$$
For $f\in H(\D)$, using Lemma $\ref{2.9}$, we obtain
\begin{align*}
|f^{(n)}(z)|^q&\lesssim \left(\frac{1}{(1-|z|^2)^{\a+2+np}}\int_{D(z,r)}|f^{(n)}(w)|^p(1-|w|^2)^{np+\a} dA(w)\right)^\frac{q}{p}\\
&\lesssim\frac{1}{(1-|z|^2)^{\frac{q(\a+2)}{p}+nq+1}}\int_{\mathbb{T}}\left(\int_{\Gamma_{\zeta_+}(\eta)}|f^{(n)}(w)|^p(1-|w|^2)^{np+\a} dA(w) \right)^{\frac{q}{p}}|d\eta|\\
&\lesssim\frac{\|f\|_{T_p^q(\a)}}{(1-|z|^2)^{\frac{q(\a+2)}{p}+nq+1}},
\end{align*}
which implies the desired result.
\end{proof}

The sequence $Z=\{a_j\}$ is a separated sequence if there is a constant $\tau>0$ with $\beta(a_j, a_j) \geq \tau$ for $k \neq j$. 
For $r>\kappa>0$, the sequence $Z=\{a_j\}$ is said to be an $(r, \kappa)$-lattice if 
$\mathbb{D}=\bigcup_k D(a_j, r)$ and
the sets $D(a_j, \kappa)$ are pairwise disjoint.
It is obvious that any $(r, \kappa)$-lattice is a separated sequence.

\begin{Definition}
Let $0<p, q<\infty$ and $Z=\{a_j\}$ be an $(r, \kappa)$-lattice. The tent space $T_p^q(Z)$ consists of all $\{x_j\}$ such that
\begin{align*}
\|\{x_j\}\|_{T_p^q(Z)}=\left(\int_\mathbb{T}\left(\sum_{a_j \in \Gamma(\eta)}|x_j|^p\right)^{\frac{q}{p}}|d \eta|\right)^{\frac{1}{q}}<\infty.
\end{align*}
For $p=\infty$ and $0<q<\infty$, the tent space $T_\infty^q(Z)$ consists of all $\{x_j\}$ with
$$
\|\{x_j\}\|_{T_\infty^q(Z)}=\left(\int_\mathbb{T}\sup _{a_j \in \Gamma(\eta)}|x_j|^q|d \eta|\right)^{\frac{1}{q}}<\infty.
$$
For $q=\infty$ and $0<p<\infty$, the tent space $T^\infty_p(Z)$ consists of all $\{x_j\}$ for which
$$
\|\{x_j\}\|_{T_p^{\infty}(Z)}=\operatorname{esssup} _{\eta \in \mathbb{T}}\left(\sup _{u \in \Gamma(\eta)} \frac{1}{1-|u|^2} \sum_{a_j \in S(u)}|x_j|^p(1-|a_j|^2)\right)^{\frac{1}{p}}<\infty.
$$
\end{Definition}
	
Analogously, the aperture $\zeta  $ of the non-tangential region $\Gamma_\zeta(\eta)$   is suppressed in the above definition since any two apertures generate the tent spaces of sequences  with equivalent quasinorms. 

\begin{Remark}\label{remark}
Let	$Z=\{a_j\}$ be an $(r, \kappa)$-lattice.
By Lemma \ref{2.9}, there exists $\zeta_{+}>\zeta>\frac{1}{2}$ such that
\begin{align}\label{*}
\bigcup_{D(a_j, r) \bigcap \Gamma_\zeta(\eta) \neq \emptyset} D(a_j, r) \subset \Gamma_{\zeta_{+}}(\eta).
\end{align}
In the rest of this paper, we will denote  $\Gamma_{\zeta_{+}}(\eta)$ by $\widetilde{\Gamma}(\eta)$ in (\ref{*}) when we need not mention the aperture $\zeta_+$.
\end{Remark}

The following result is well known (see for example \cite{ag1}). 

\begin{Lemma}\label{2.13}
Let $0<p<\infty$. There is a constant $C>0$ such that
$$
\int_{\mathbb{T}}\left(\sup _{a_j \in \widetilde{\Gamma}(\eta)}|x_j|^p\right)|d \eta| \leq C \int_{\mathbb{T}}\left(\sup _{a_j \in \Gamma(\eta)}|x_j|^p\right)|d \eta|
$$
for all sequences $\{a_j\} \subset \mathbb{D}$ and $\{x_j\} \subset \mathbb{C}$.
\end{Lemma}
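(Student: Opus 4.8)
The plan is to reduce the statement to a comparison of distribution functions on $\mathbb{T}$ and then settle that by a covering argument. Throughout write $\Gamma(\eta)=\Gamma_\zeta(\eta)$ and $\widetilde{\Gamma}(\eta)=\Gamma_{\zeta_{+}}(\eta)$ with $\zeta_{+}>\zeta$ as in Remark~\ref{remark}; note that in the situation at hand $\zeta>1$, since $\widetilde{\Gamma}$ is produced through Lemma~\ref{2.9}, so that every cone $\Gamma_\zeta(\eta)$ already exhausts a neighbourhood of $0$. Given $\{a_j\}\subset\mathbb{D}$ and $\{x_j\}\subset\mathbb{C}$, set
\[
N(\eta)=\sup_{a_j\in\Gamma(\eta)}|x_j|,\qquad \widetilde{N}(\eta)=\sup_{a_j\in\widetilde{\Gamma}(\eta)}|x_j|,
\]
so that, using $\sup_j|x_j|^p=(\sup_j|x_j|)^p$, the asserted inequality is $\|\widetilde{N}\|_{L^p(\mathbb{T})}^p\le C\,\|N\|_{L^p(\mathbb{T})}^p$. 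By the layer-cake formula it suffices to prove
\[
\bigl|\{\eta\in\mathbb{T}:\widetilde{N}(\eta)>\lambda\}\bigr|\le C\,\bigl|\{\eta\in\mathbb{T}:N(\eta)>\lambda\}\bigr|,\qquad \lambda>0,
\]
with $C$ independent of $\lambda$ and of the sequences.

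The geometric input I would isolate first concerns \emph{shadows}. For $a\in\mathbb{D}$ put $J_\zeta(a)=\{\xi\in\mathbb{T}:a\in\Gamma_\zeta(\xi)\}=\mathbb{T}\cap\{\xi:|\xi-a|<\zeta(1-|a|^2)\}$. When $\zeta>1$ this is a nonempty subarc of $\mathbb{T}$ centered at $a/|a|$ with $|J_\zeta(a)|\asymp 1-|a|^2$, the implied constants depending only on $\zeta$ (and $J_\zeta(a)=\mathbb{T}$ once $|a|<1-1/\zeta$); the same holds for $\zeta_{+}$. Since $\zeta_{+}>\zeta$, the arcs $J_\zeta(a)\subset J_{\zeta_{+}}(a)$ are concentric with comparable lengths, so there is $C_0=C_0(\zeta,\zeta_{+})\ge1$ with $J_{\zeta_{+}}(a)\subset C_0J_\zeta(a)$, where $C_0J$ denotes the arc concentric with $J$ and $C_0$ times as long (with the convention $C_0\mathbb{T}:=\mathbb{T}$). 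Equivalently: if $a\in\widetilde{\Gamma}(\eta)$ then $\eta\in C_0J_\zeta(a)$.

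Now fix $\lambda>0$ and set $O_\lambda=\{N>\lambda\}$. If $\widetilde{N}(\eta)>\lambda$, choose $a_j\in\widetilde{\Gamma}(\eta)$ with $|x_j|>\lambda$; then for every $\xi\in J_\zeta(a_j)$ we have $a_j\in\Gamma(\xi)$, hence $N(\xi)\ge|x_j|>\lambda$, i.e. $J_\zeta(a_j)\subset O_\lambda$, while $\eta\in C_0J_\zeta(a_j)$ by the previous step. Thus
\[
\{\widetilde{N}>\lambda\}\subset\bigcup_{j:\,|x_j|>\lambda}C_0J_\zeta(a_j).
\]
Applying the basic Vitali covering lemma on $\mathbb{T}$ to this countable family of arcs, I extract a pairwise disjoint subfamily $\{C_0J_\zeta(a_{j_\ell})\}_\ell$ whose threefold dilates still cover the union above; since the $J_\zeta(a_{j_\ell})$ are then pairwise disjoint subsets of $O_\lambda$,
\[
|\{\widetilde{N}>\lambda\}|\le\sum_\ell|3C_0J_\zeta(a_{j_\ell})|=3C_0\sum_\ell|J_\zeta(a_{j_\ell})|\le 3C_0\,|O_\lambda|,
\]
which is the required distributional estimate with $C=3C_0$. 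Integrating in $\lambda$ finishes the proof.

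I expect the only genuine care to be needed in the geometric paragraph: checking $|J_\zeta(a)|\asymp 1-|a|^2$ with constants depending only on $\zeta$, and that enlarging the aperture merely dilates the shadow by a bounded factor (together with the harmless observation, automatic for $\zeta>1$, that the shadows are nonempty so the scheme does not degenerate near the origin). Everything after that is the standard distribution-function/covering argument; alternatively the Vitali step can be replaced by the weak type $(1,1)$ bound for the Hardy--Littlewood maximal operator $M$ on $\mathbb{T}$ applied to $\mathbf{1}_{O_\lambda}$, since the display above shows $\{\widetilde{N}>\lambda\}\subset\{M\mathbf{1}_{O_\lambda}\ge 1/C_0\}$.
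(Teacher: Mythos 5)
Your proof is correct. Note, however, that the paper does not actually prove Lemma~\ref{2.13}: it only records it as ``well known'' with a pointer to \cite{ag1}, so there is no in-paper argument to compare against. What you supply is the standard change-of-aperture argument for non-tangential maximal functions, adapted to the discrete setting: reduce to a distributional inequality, observe that the shadow $J_{\zeta_+}(a)$ is a concentric arc of length comparable to $J_\zeta(a)$, and conclude via Vitali (equivalently, via the weak type $(1,1)$ bound for the Hardy--Littlewood maximal operator applied to $\mathbf{1}_{O_\lambda}$, as you note). This is essentially the proof one finds in the cited literature, so you have filled in a genuine omission rather than diverged from the paper. Two small points worth keeping: (i) your observation that one must have $\zeta>1$ is not cosmetic --- for $\tfrac12<\zeta\le 1$ the shadow $J_\zeta(a)$ is empty for $a$ near the origin and the stated inequality can fail (e.g.\ a single point $a_1=0$ gives a positive left side and zero right side), so the reduction to the aperture $\zeta>1$ coming from Lemma~\ref{2.9} and Remark~\ref{remark} is exactly what makes the lemma true; (ii) in the display $\sum_\ell|3C_0J_\zeta(a_{j_\ell})|=3C_0\sum_\ell|J_\zeta(a_{j_\ell})|$ the equality should be an inequality $\le$, since a dilated arc is capped at length $2\pi$ --- this only helps you.
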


\begin{Proposition}\cite[Proposition 2.7]{ag1}\label{2.7}
Let $Z=\{a_j\}$ be an $(r, \kappa)$-lattice and $L \geq 1, R>0$. There is a positive integer $N=$ $N(L, R, Z)$ such that for each point $z \in \mathbb{D}$ there are at most $N$ hyperbolic discs $D(a_j, L r)$ satisfying $D(z, R) \cap D(a_j, L r) \neq \emptyset$.
\end{Proposition}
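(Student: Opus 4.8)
The plan is to reduce the statement to a volume‑counting argument for the Möbius–invariant measure on $\D$. First I would record the elementary geometric fact that any index $j$ contributing to the count produces a point $a_j$ close to $z$: if $D(z,R)\cap D(a_j,Lr)\neq\emptyset$, pick a common point $w$ and apply the triangle inequality for the hyperbolic metric to get $\beta(z,a_j)\le\beta(z,w)+\beta(w,a_j)<R+Lr$, so that $a_j\in D(z,R+Lr)$. (If one prefers to avoid the triangle inequality for $\beta$ directly, one can argue with the pseudo‑hyperbolic distance $\rho$ and its standard triangle inequality and then translate back through $\beta=\frac12\log\frac{1+\rho}{1-\rho}$.)

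Next I would use the defining property of an $(r,\kappa)$‑lattice, namely that the hyperbolic discs $D(a_j,\kappa)$ are pairwise disjoint. For any $j$ with $a_j\in D(z,R+Lr)$ and any $\xi\in D(a_j,\kappa)$ one has $\beta(z,\xi)\le\beta(z,a_j)+\beta(a_j,\xi)<R+Lr+\kappa$, hence $D(a_j,\kappa)\subset D(z,R+Lr+\kappa)=:D$. Thus the discs $\{D(a_j,\kappa)\}$, over the indices $j$ we are counting, form a pairwise disjoint family all contained in the single hyperbolic disc $D$.

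Then I would pass to the measure $d\lambda(\xi)=\dfrac{dA(\xi)}{(1-|\xi|^2)^2}$, which is invariant under the automorphisms of $\D$; since these act transitively on $\D$, the quantity $\lambda(D(w,\rho))$ does not depend on the centre $w$, so it equals some value $V(\rho)$ that is finite for every $\rho>0$ and positive for $\rho>0$. Combining disjointness with the containment in $D$ gives
$\#\{j:\,D(z,R)\cap D(a_j,Lr)\neq\emptyset\}\cdot V(\kappa)=\sum_j\lambda(D(a_j,\kappa))\le\lambda(D)=V(R+Lr+\kappa)$,
so the count is bounded by $N:=V(R+Lr+\kappa)/V(\kappa)$, a finite number depending only on $r,\kappa,R,L$, that is, only on $R$, $L$, and the lattice $Z$. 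This is exactly the assertion.

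I do not expect a genuine obstacle: the only points that need a little care are the triangle inequality for $\beta$ and the fact that $\lambda$ assigns one and the same finite value to all hyperbolic discs of a given radius. As an alternative to invoking $\lambda$, one could instead note that $1-|a_j|^2\asymp 1-|z|^2$ holds uniformly for $\xi\in D$ (by the estimate recorded just before Lemma \ref{2.9}) together with $\lambda(D(a_j,\kappa))\asymp(1-|a_j|^2)^2\asymp(1-|z|^2)^2\asymp A(D)$, which yields the same bound directly from the normalized area measure $A$; I would mention this variant but carry out the argument with $\lambda$ since it is the cleanest.
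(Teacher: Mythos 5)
Your argument is correct and is the standard volume-counting proof: reduce to the fact that the pairwise disjoint discs $D(a_j,\kappa)$ with $\beta(z,a_j)<R+Lr$ all sit inside $D(z,R+Lr+\kappa)$, then compare M\"obius-invariant measures. The paper itself offers no proof of this statement — it is quoted verbatim from \cite[Proposition 2.7]{ag1} — and the proof given there is essentially the same counting argument, so there is nothing to reconcile; the only points worth double-checking, the triangle inequality for $\beta$ and the centre-independence and finiteness of $\lambda(D(w,\rho))$, are both handled correctly in your write-up.
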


The following lemma is very important, and we can find it in \cite{pa}.
	
\begin{Proposition}\cite [Lemma 14]{pa}\label{2.23}
Let $0<p, q<\infty$, $\a>-2$, $L>\max \{1, \frac{p}{q}, \frac{1}{q}, \frac{1}{p}\}$, and
$Z=\{a_j\}$ be an $(r, \kappa)$-lattice. Then the operator $S: T_p^q(Z) \rightarrow A T_p^q(\alpha)$ is bounded. Here,
$$
S(\{x_j\})(z)=\sum_{j=1}^{\infty} x_j \frac{(1-|a_j|)^{L}}{(1-\overline{a_j} z)^{L+\frac{\a+2}{p}}}.
$$
\end{Proposition}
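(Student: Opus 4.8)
\emph{Proof sketch (plan).} The plan is to estimate $\|S(\{x_j\})\|_{T_p^q(\alpha)}$ directly and match it, through Lemma \ref{2.3}, against $\|\{x_j\}\|_{T_p^q(Z)}$. Applying Lemma \ref{2.3} to the measure $\mu=\sum_j|x_j|^p\delta_{a_j}$ (the unit point mass at $a_j$), for which $\mu(\Gamma(\eta))=\sum_{a_j\in\Gamma(\eta)}|x_j|^p$, shows that for \emph{every} $\lambda>\max\{1,\frac pq\}$,
\[
\|\{x_j\}\|_{T_p^q(Z)}^q\asymp\int_{\mathbb T}\Big(\sum_j|x_j|^p\Big(\frac{1-|a_j|^2}{|1-\overline{a_j}\eta|}\Big)^{\lambda}\Big)^{q/p}|d\eta|.
\]
Hence it suffices to produce one admissible $\lambda$ for which
\[
\int_{\Gamma(\eta)}|S(\{x_j\})(z)|^p(1-|z|^2)^{\alpha}\,dA(z)\lesssim\sum_j|x_j|^p\Big(\frac{1-|a_j|^2}{|1-\overline{a_j}\eta|}\Big)^{\lambda},\qquad\eta\in\mathbb T,
\]
since raising to the power $q/p$ and integrating over $\mathbb T$ then closes the argument. (The series defining $S(\{x_j\})$ converges locally uniformly, so $S(\{x_j\})\in H(\mathbb D)$; alternatively one proves the norm bound for finitely supported $\{x_j\}$ and passes to the limit.)

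The workhorse will be the elementary one‑dimensional reduction: for $\beta>-2$ and $c>0$,
\[
\int_{\Gamma(\eta)}\frac{(1-|z|^2)^{\beta}}{|1-\overline{a}z|^{\beta+2+c}}\,dA(z)\;\lesssim\;\frac{1}{|1-\overline{a}\eta|^{c}}\;\asymp\;\frac1{(1-|a|^2)^{c}}\Big(\frac{1-|a|^2}{|1-\overline{a}\eta|}\Big)^{c},\qquad a\in\mathbb D .
\]
I would prove this by slicing $\Gamma(\eta)$ along the level sets $\{1-|z|=s\}$, whose arc‑length is $\asymp s$, together with the standard fact that $|1-\overline{a}z|\asymp(1-|z|)+|1-\overline{a}\eta|$ on $\Gamma(\eta)$, which turns the integral into $\int_0^{1}s^{\beta+1}(s+|1-\overline{a}\eta|)^{-(\beta+2+c)}\,ds$; here $\beta>-2$ is precisely what makes the vertex piece integrable (the same reason $T_p^q(\alpha)$ is nontrivial only for $\alpha>-2$).

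When $0<p\le1$, subadditivity of $t\mapsto t^p$ gives $|S(\{x_j\})(z)|^p\le\sum_j|x_j|^p\,(1-|a_j|)^{Lp}|1-\overline{a_j}z|^{-(Lp+\alpha+2)}$; integrating against $(1-|z|^2)^\alpha$ over $\Gamma(\eta)$ and invoking the workhorse estimate with $\beta=\alpha$, $c=Lp$ yields the required bound with $\lambda=Lp$, and $L>\tfrac1p$, $L>\tfrac1q$ are exactly what force $Lp>\max\{1,\tfrac pq\}$. When $p>1$ I would factor the summand as $\frac{(1-|a_j|)^{u}}{(1-\overline{a_j}z)^{v}}\cdot\frac{(1-|a_j|)^{u'}}{(1-\overline{a_j}z)^{v'}}$ with $u+u'=L$, $v+v'=L+\frac{\alpha+2}p$, apply H\"older with exponents $p,p'$, and estimate the $p'$‑factor by
$\sum_j(1-|a_j|^2)^{up'}|1-\overline{a_j}z|^{-vp'}\lesssim(1-|z|^2)^{-(v-u)p'}$
— a standard estimate coming from the disjointness of the discs $D(a_j,\kappa)$ and the Forelli--Rudin integral, valid once $up'>1$ and $v>u$ — whose negative power of $1-|z|^2$ is then absorbed into $(1-|z|^2)^\alpha$. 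Applying the workhorse estimate to what remains, with parameters $\beta=\alpha-(v-u)p>-2$ and $c=u'p>0$, delivers the bound with $\lambda=u'p$. Finally one selects $\tfrac1{p'}<u<L-\max\{\tfrac1p,\tfrac1q\}$ and $u<v<u+\tfrac{\alpha+2}{p}$; this interval for $u$ is nonempty exactly because $L>\max\{1,\tfrac pq,\tfrac1q,\tfrac1p\}$ (checking the cases $q\ge p$ and $q<p$ separately), which is where the full strength of the hypothesis on $L$ is used.

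I expect the main obstacle to be the exponent bookkeeping in the case $p>1$: the four parameters $u,u',v,v'$ must be chosen so that simultaneously the auxiliary lattice sum converges ($up'>1$), the absorbed weight stays admissible ($\alpha-(v-u)p>-2$), the workhorse estimate applies ($u'p>0$), and the resulting $\lambda=u'p$ exceeds $\max\{1,\tfrac pq\}$ so that Lemma \ref{2.3} is available — and one must verify that the stated lower bound on $L$ is exactly enough to make all four constraints compatible. The case $p\le1$ and the reduction through Lemma \ref{2.3} are routine by comparison.
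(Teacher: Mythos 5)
The paper does not actually prove this proposition: it is quoted verbatim as Lemma 14 of the cited preprint of Per\"al\"a \cite{pa}, so there is no internal argument to compare against. Your sketch is a correct, self-contained proof, and it follows the standard route used in that circle of papers (Arsenovi\'c, Jevti\'c, Per\"al\"a): reduce the $T_p^q(Z)$-norm to $\int_{\mathbb T}\bigl(\sum_j|x_j|^p((1-|a_j|^2)/|1-\overline{a_j}\eta|)^{\lambda}\bigr)^{q/p}|d\eta|$ via Lemma \ref{2.3} applied to $\mu=\sum_j|x_j|^p\delta_{a_j}$, and then prove the fibrewise estimate over $\Gamma(\eta)$ by subadditivity when $p\le1$ and by H\"older plus the lattice/Forelli--Rudin bound $\sum_j(1-|a_j|)^{up'}|1-\overline{a_j}z|^{-vp'}\lesssim(1-|z|)^{-(v-u)p'}$ when $p>1$. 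I checked the exponent bookkeeping you were worried about: in the H\"older case the workhorse estimate indeed comes out with $\beta=\alpha-(v-u)p$ and $c=v'p-(\beta+2)=u'p$, so $\lambda=u'p$, and the window $\tfrac1{p'}<u<L-\max\{\tfrac1p,\tfrac1q\}$ is nonempty precisely under the stated hypothesis on $L$ (for $q<p$ one uses $\tfrac pq-\tfrac1q=\tfrac{p-1}{q}>\tfrac{p-1}{p}=1-\tfrac1p$); in the $p\le1$ case $\lambda=Lp>\max\{1,p/q\}$ is exactly $L>\max\{1/p,1/q\}$. The only point to tidy up in a full write-up is the one you already flag: the crude bound $|x_j|\lesssim\|\{x_j\}\|_{T_p^q(Z)}(1-|a_j|)^{-1/q}$ does not by itself give absolute convergence of the series for all admissible $L$, so one should either extract pointwise convergence from the very H\"older/subadditivity estimates of the proof, or run the argument for finitely supported sequences and extend by density, using the pointwise evaluation bound (Lemma \ref{fn} with $n=0$) to identify the extension with the pointwise sum. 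With that caveat the proof is complete.
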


We will use the following proposition, which is a factorization  of tent spaces  of sequences.

\begin{Proposition}\cite[Proposition 2.16]{ag1}\label{2.16}
Let $0<p, q<\infty$ and $Z=\{a_j\}$ be an $(r, \kappa)$-lattice. If $p \leq p_1, p_2 \leq\infty, q \leq q_1$, $q_2 \leq\infty$ and satisfy $\frac{1}{p}=\frac{1}{p_1}+\frac{1}{p_2}$ and $\frac{1}{q}=\frac{1}{q_1}+\frac{1}{q_1}$. Then
$$
T_p^q(Z)=T_{p_1}^{q_1}(Z) \cdot T_{p_2}^{q_2}(Z).
$$
\end{Proposition}

The following propositions 
play a key role in the proof of our main theorem. 
From now on, $p'$ is the conjugate exponent of $p$ with $\frac{1}{p}+\frac{1}{p'}=1$.

\begin{Proposition}\cite[Lemma 6]{ar}\label{2.18}
Let $1 \leq p<\infty, 1<q<\infty$ and $Z=\{a_j\}$ be an $(r, \kappa)$-lattice . Then $\left(T_p^q(Z)\right)^* \cong T_{p^{\prime}}^{q^{\prime}}(Z)$. The isomorphism between $\left(T_p^q(Z)\right)^*$ and $T_{p^{\prime}}^{q^{\prime}}(Z)$ is given by the operator
$
\{d_j\} \mapsto\langle\cdot,\{d_j\}\rangle,
$
where $\langle\cdot,\{d_j\}\rangle$ is defined by
$$
\langle\{c_j\},\{d_j\}\rangle=\sum_j c_j d_j(1-|a_j|^2), \quad\{c_j\} \in T_p^q(Z) .
$$
In fact, $\|\{d_j\}\|_{T_{p^{\prime}}^{q^{\prime}}(Z)} \asymp \sup \left\{\left|\sum_j c_j d_j(1-|a_j|^2)\right|:\|\{c_j\}\|_{T_p^q(Z)}=1\right\}$.
\end{Proposition}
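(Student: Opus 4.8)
The plan is to identify $T_p^q(Z)$ isometrically with a closed subspace of a vector-valued Lebesgue space and to transport the standard duality of the latter. For $a_j\in Z$ set $I(a_j)=\{\eta\in\mathbb{T}:a_j\in\Gamma(\eta)\}$; since $\zeta>\tfrac12$ this is a subarc of $\mathbb{T}$ with $|I(a_j)|\asymp 1-|a_j|^2$, and $a_j\in\Gamma(\eta)$ if and only if $\eta\in I(a_j)$. Define $\Phi:T_p^q(Z)\to L^q(\mathbb{T};\ell^p)$ by $\Phi(\{x_j\})(\eta,j)=x_j\mathbf{1}_{I(a_j)}(\eta)$. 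Then $\sum_j|\Phi(\{x_j\})(\eta,j)|^p=\sum_{a_j\in\Gamma(\eta)}|x_j|^p$, so $\Phi$ is an isometry of $T_p^q(Z)$ onto a closed subspace of $L^q(\mathbb{T};\ell^p)$.

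The first inclusion is elementary. If $\{d_j\}\in T_{p'}^{q'}(Z)$ then, using $1-|a_j|^2\asymp|I(a_j)|$ and Hölder's inequality first in $j$ (exponents $p,p'$) and then in $\eta$ (exponents $q,q'$),
\begin{align*}
\Bigl|\sum_j c_j d_j(1-|a_j|^2)\Bigr|
&\lesssim\int_{\mathbb{T}}\sum_{a_j\in\Gamma(\eta)}|c_j|\,|d_j|\,|d\eta|\\
&\le\int_{\mathbb{T}}\Bigl(\sum_{a_j\in\Gamma(\eta)}|c_j|^p\Bigr)^{1/p}\Bigl(\sum_{a_j\in\Gamma(\eta)}|d_j|^{p'}\Bigr)^{1/p'}|d\eta|
\le\|\{c_j\}\|_{T_p^q(Z)}\,\|\{d_j\}\|_{T_{p'}^{q'}(Z)}
\end{align*}
(the $p'=\infty$ factor being read as $\sup_{a_j\in\Gamma(\eta)}|d_j|$). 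Hence $\{d_j\}\mapsto\langle\cdot,\{d_j\}\rangle$ maps $T_{p'}^{q'}(Z)$ boundedly into $(T_p^q(Z))^*$; it is injective because testing against the sequences with a single nonzero entry recovers each $d_j(1-|a_j|^2)$.

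For surjectivity, let $\Lambda\in(T_p^q(Z))^*$. Since $\Phi$ is an isometry onto its range, $\Lambda\circ\Phi^{-1}$ extends by Hahn--Banach to a norm-preserving functional on $L^q(\mathbb{T};\ell^p)$. When $1<p<\infty$, reflexivity of $\ell^{p'}$ gives an $H\in L^{q'}(\mathbb{T};\ell^{p'})$ representing it with $\|H\|_{L^{q'}(\ell^{p'})}\le\|\Lambda\|$; when $p=1$ one represents the extension on each truncation $\{u:u(\eta,j)=0\text{ for }j>N\}\cong L^q(\mathbb{T};\ell^\infty_N)$, checks consistency in $N$ using reflexivity of these finite-dimensional fibers, and passes to the limit by monotone convergence to still obtain $H\in L^{q'}(\mathbb{T};\ell^\infty)$ with $\|H\|\le\|\Lambda\|$. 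Setting $d_j:=(1-|a_j|^2)^{-1}\int_{I(a_j)}H(\eta,j)\,|d\eta|$, one checks $\Lambda(\{x_j\})=\sum_j x_j d_j(1-|a_j|^2)$ first for finitely supported $\{x_j\}$ and then, since $q<\infty$ makes finitely supported sequences dense in $T_p^q(Z)$, for all $\{x_j\}\in T_p^q(Z)$.

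It remains to show $\|\{d_j\}\|_{T_{p'}^{q'}(Z)}\lesssim\|H\|_{L^{q'}(\ell^{p'})}\le\|\Lambda\|$, and this is the heart of the matter. From $|d_j|\lesssim|I(a_j)|^{-1}\int_{I(a_j)}|H(\eta,j)|\,|d\eta|$, together with the elementary fact that $\eta\in I(a_j)$ forces $I(a_j)$ to lie in an arc centered at $\eta$ of comparable length, one gets $|d_j|\lesssim M(H(\cdot,j))(\eta)$ for every $\eta\in I(a_j)$, where $M$ is the Hardy--Littlewood maximal operator on $\mathbb{T}$. Hence $\sum_{a_j\in\Gamma(\eta)}|d_j|^{p'}\lesssim\sum_j|M(H(\cdot,j))(\eta)|^{p'}$, and the Fefferman--Stein vector-valued maximal inequality (valid since $1<p'\le\infty$ and $1<q'<\infty$, the case $p'=\infty$ being the scalar inequality applied to $\sup_j|H(\cdot,j)|$) gives
\[
\|\{d_j\}\|_{T_{p'}^{q'}(Z)}\lesssim\Bigl\|\Bigl(\sum_j|M(H(\cdot,j))|^{p'}\Bigr)^{1/p'}\Bigr\|_{L^{q'}(\mathbb{T})}\lesssim\Bigl\|\Bigl(\sum_j|H(\cdot,j)|^{p'}\Bigr)^{1/p'}\Bigr\|_{L^{q'}(\mathbb{T})}=\|H\|_{L^{q'}(\ell^{p'})}.
\]
Combining the two directions with their norm estimates yields the isomorphism $(T_p^q(Z))^*\cong T_{p'}^{q'}(Z)$ and, in particular, $\|\{d_j\}\|_{T_{p'}^{q'}(Z)}\asymp\|\langle\cdot,\{d_j\}\rangle\|$, which is the final displayed assertion. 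I expect the last quantitative step to be the main obstacle: everything there rests on the Fefferman--Stein inequality and on the arc geometry of the lattice (the bounded-overlap phenomenon also recorded in Proposition \ref{2.7}), and the $p=1$ endpoint needs the extra truncation argument above precisely because $\ell^\infty$ fails the Radon--Nikodym property, so $(L^q(\mathbb{T};\ell^1))^*$ is not literally $L^{q'}(\mathbb{T};\ell^\infty)$.
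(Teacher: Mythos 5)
The paper offers no proof of this statement: it is imported verbatim from \cite[Lemma 6]{ar}, so there is no in-paper argument to compare against. Your proof is correct and self-contained, and it follows what is essentially the standard route for discrete tent-space duality: the easy inclusion by Fubini plus a double application of H\"older, and the converse by embedding $T_p^q(Z)$ isometrically into $L^q(\mathbb{T};\ell^p)$ via the arcs $I(a_j)=\{\eta:a_j\in\Gamma(\eta)\}$, extending by Hahn--Banach, representing the extension in $L^{q'}(\mathbb{T};\ell^{p'})$, and then controlling $\|\{d_j\}\|_{T_{p'}^{q'}(Z)}$ through the Fefferman--Stein vector-valued maximal inequality; your separate treatment of the $p=1$ endpoint by finite truncations is genuinely necessary (since $\ell^\infty$ fails the Radon--Nikodym property) and is correctly executed, with consistency of the $H_N$ and monotone convergence giving $\|H\|_{L^{q'}(\ell^\infty)}\le\|\Lambda\|$. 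One small point to make explicit: the comparability $|I(a_j)|\asymp 1-|a_j|^2$, the injectivity test against single-entry sequences, and the very definition of $d_j$ all presuppose $I(a_j)\neq\emptyset$ for every $j$, which can fail for lattice points near the origin when the aperture $\zeta$ is close to $\tfrac12$; fixing $\zeta>1$ (the convention already in force via Lemma \ref{2.9}, and harmless by aperture-independence of the quasinorms) removes the issue, and with that convention your argument goes through as written.
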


\begin{Proposition}\cite[Proposition 2]{ar}\label{2.19}
Let $0<p<1<q<\infty$ and $Z=\{a_j\}$ be an $(r, \kappa)$-lattice. Then $\left(T_p^q(Z)\right)^* \cong T_{\infty}^{q^{\prime}}(Z)$. The isomorphism between $\left(T_p^q(Z)\right)^*$ and $T_{\infty}^{q^{\prime}}(Z)$ is given by the operator as in Proposition \ref{2.18}. In fact, $$\|\{d_j\}\|_{T_{\infty}^{q^{\prime}}(Z)} \asymp \sup \left\{\left|\sum_j c_j d_j(1-|a_j|^2)\right|:\|\{c_j\}\|_{T_p^q(Z)}=1\right\}.$$
\end{Proposition}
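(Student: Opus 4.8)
The plan is to identify $(T_p^q(Z))^*$ with $T_\infty^{q'}(Z)$ by establishing two containments: (a) every $\{d_j\}\in T_\infty^{q'}(Z)$ induces, through $\langle\,\cdot\,,\{d_j\}\rangle$, a bounded linear functional on $T_p^q(Z)$ with norm $\lesssim\|\{d_j\}\|_{T_\infty^{q'}(Z)}$; and (b) conversely, every $\Lambda\in(T_p^q(Z))^*$ equals $\langle\,\cdot\,,\{d_j\}\rangle$ for a unique $\{d_j\}\in T_\infty^{q'}(Z)$ with $\|\{d_j\}\|_{T_\infty^{q'}(Z)}\lesssim\|\Lambda\|$. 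Together these give simultaneously the stated isomorphism and the norm equivalence. Containment (a) is elementary; containment (b) carries the weight.

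For (a), recall that $a_j\in\Gamma(\eta)$ precisely when $\eta$ lies in an arc $I_j\subseteq\mathbb{T}$ of length $\asymp 1-|a_j|^2$ centered at $a_j/|a_j|$, whence the Fubini-type comparison $\sum_j b_j(1-|a_j|^2)\asymp\int_{\mathbb{T}}\big(\sum_{a_j\in\Gamma(\eta)}b_j\big)\,|d\eta|$ for every sequence $b_j\ge 0$. Applying it with $b_j=|c_j||d_j|$, estimating inside each tent $\sum_{a_j\in\Gamma(\eta)}|c_j||d_j|\le\big(\sup_{a_j\in\Gamma(\eta)}|d_j|\big)\sum_{a_j\in\Gamma(\eta)}|c_j|\le\big(\sup_{a_j\in\Gamma(\eta)}|d_j|\big)\big(\sum_{a_j\in\Gamma(\eta)}|c_j|^p\big)^{1/p}$ (here $0<p\le 1$ forces $\sum x_j\le(\sum x_j^p)^{1/p}$), and then using Hölder on $\mathbb{T}$ with exponents $q'$ and $q$, one obtains $\big|\sum_j c_jd_j(1-|a_j|^2)\big|\lesssim\|\{d_j\}\|_{T_\infty^{q'}(Z)}\|\{c_j\}\|_{T_p^q(Z)}$, which is (a).

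For (b), let $\Lambda\in(T_p^q(Z))^*$. Finitely supported sequences are dense in $T_p^q(Z)$ (truncate to $|a_j|\le 1-1/N$ and apply dominated convergence), and each unit sequence $e_j$ lies in $T_p^q(Z)$; hence $\Lambda$ is represented on finitely supported sequences by $\langle\,\cdot\,,\{d_j\}\rangle$ with $d_j:=\Lambda(e_j)/(1-|a_j|^2)$, and it remains to prove $\|\{d_j\}\|_{T_\infty^{q'}(Z)}\lesssim\|\Lambda\|$. By monotone convergence it suffices to bound $\|\{d_j\mathbf{1}_{j\in F}\}\|_{T_\infty^{q'}(Z)}$ uniformly over finite sets $F$, so assume $\{d_j\}$ finitely supported and not identically zero, and write $u(\eta)=\sup_{a_j\in\Gamma(\eta)}|d_j|$, a bounded simple function supported on $\bigcup_{j\in F}I_j$. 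Since $q>1$ (so $q'<\infty$), put $\psi=u^{q'-1}/\|u\|_{L^{q'}(\mathbb{T})}^{q'-1}\ge 0$, so that $\|\psi\|_{L^q(\mathbb{T})}=1$ and $\int_{\mathbb{T}}u\psi\,|d\eta|=\|u\|_{L^{q'}(\mathbb{T})}=\|\{d_j\}\|_{T_\infty^{q'}(Z)}$. Partition $\{u>0\}$ into pairwise disjoint Borel sets $E_j$ with $E_j\subseteq I_j\cap\{u=|d_j|\}$, by assigning each $\eta$ to a deepest index attaining the supremum defining $u(\eta)$, and set $c_j:=\overline{d_j}\,|d_j|^{-1}(1-|a_j|^2)^{-1}\int_{E_j}\psi\,|d\eta|$ for $d_j\neq 0$ and $c_j:=0$ otherwise, so $\{c_j\}$ is finitely supported. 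Then $\Lambda(\{c_j\})=\sum_j c_jd_j(1-|a_j|^2)=\sum_j|d_j|\int_{E_j}\psi=\sum_j\int_{E_j}u\psi=\int_{\mathbb{T}}u\psi=\|\{d_j\}\|_{T_\infty^{q'}(Z)}$.

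The remaining point, and the \emph{main obstacle}, is the estimate $\|\{c_j\}\|_{T_p^q(Z)}\lesssim 1$; granting it, $\|\{d_j\}\|_{T_\infty^{q'}(Z)}=\Lambda(\{c_j\})\le\|\Lambda\|\,\|\{c_j\}\|_{T_p^q(Z)}\lesssim\|\Lambda\|$, finishing (b) and hence the proposition. The difficulty is that a fixed tent $\Gamma(\eta)$ contains infinitely many lattice points and that, a priori, $|c_j|$ need not decay along a tent, so a crude term-by-term bound of $\sum_{a_j\in\Gamma(\eta)}|c_j|^p$ diverges. The estimate is recovered by a stopping-time / Calder\'on--Zygmund argument exploiting: (i) that $\psi$ is constant on each $E_j$, equal to $|d_j|^{q'-1}\|u\|_{L^{q'}}^{-(q'-1)}$, so that $c_j\asymp\big(|E_j|/|I_j|\big)|d_j|^{q'-1}\|u\|_{L^{q'}}^{-(q'-1)}$; (ii) the pairwise disjointness of the $E_j$ and the inclusions $E_j\subseteq I_j$; (iii) the bounded-overlap property of the $(r,\kappa)$-lattice (Proposition~\ref{2.7}), which caps the number of points of $\Gamma(\eta)$ at any given scale; and (iv) $q>1$, via $L^q(\mathbb{T})$-boundedness of the Hardy--Littlewood maximal operator. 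One decomposes $\mathbb{T}=\bigcup_j E_j$, estimates $\int_{E_j}\big(\sum_{a_i\in\Gamma(\eta)}|c_i|^p\big)^{q/p}|d\eta|$ scale by scale along the tree of the lattice, and sums the resulting geometric series. A variant replaces the explicit test sequence by the factorization $T_p^q(Z)=T_1^q(Z)\cdot T_{p/(1-p)}^\infty(Z)$ of Proposition~\ref{2.16} together with the case $p=1$ of Proposition~\ref{2.18}, reducing matters to a pointwise-multiplier inequality for $T_\infty^{q'}(Z)$; this requires the same lattice geometry and does not circumvent the obstacle.
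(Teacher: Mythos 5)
This proposition is quoted in the paper from Arsenovi\'c \cite[Proposition 2]{ar}; the paper supplies no proof of its own, so your attempt can only be judged on its internal correctness. Your part (a) (that every $\{d_j\}\in T_\infty^{q'}(Z)$ gives a bounded functional, via the Fubini identity $\sum_j b_j(1-|a_j|^2)\asymp\int_{\mathbb T}\sum_{a_j\in\Gamma(\eta)}b_j\,|d\eta|$, the $p\le 1$ inequality $\sum x_j\le(\sum x_j^p)^{1/p}$, and H\"older in $\eta$) is correct and standard.

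Part (b), however, contains a genuine gap, and you have located it yourself: the estimate $\|\{c_j\}\|_{T_p^q(Z)}\lesssim 1$ for your test sequence is asserted, not proved. The disjointness of the sets $E_j\subseteq I_j$ gives you control of $\sum_j |c_j|(1-|a_j|^2)\asymp\sum_j\int_{E_j}\psi\le\int_{\mathbb T}\psi$, i.e.\ an $\ell^1$-per-cone type bound, but for $p<1$ the quantity $\bigl(\sum_{a_j\in\Gamma(\eta)}|c_j|^p\bigr)^{1/p}$ dominates $\sum_{a_j\in\Gamma(\eta)}|c_j|$, and a single cone $\Gamma(\eta)$ contains lattice points at every scale, so infinitely many indices with $c_j\neq 0$ may contribute; nothing in your construction forces the ratios $|E_j|/|I_j|$ to decay along the tree of the lattice, so the series $\sum_{a_j\in\Gamma(\eta)}(|E_j|/|I_j|)^p|d_j|^{p(q'-1)}$ has no identified summability mechanism. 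The appeal to a ``stopping-time / Calder\'on--Zygmund argument \dots summing a geometric series'' names the kind of tool one would want but does not exhibit the decomposition, the scale-by-scale estimate, or the source of geometric decay; as written it is a placeholder, not a proof. (Your proposed variant via the factorization $T_p^q(Z)=T_1^q(Z)\cdot T_{p/(1-p)}^\infty(Z)$ and the $p=1$ duality is a promising alternative --- it would reduce matters to testing against sequences with boundedly many nonzero entries per cone, where $\ell^p$ and $\ell^1$ are comparable --- but you correctly note you have not closed that route either.) Until the $T_p^q(Z)$ bound on a suitable test sequence is actually established, the lower estimate $\|\{d_j\}\|_{T_\infty^{q'}(Z)}\lesssim\|\Lambda\|$, and hence the surjectivity half of the duality, remains unproved.
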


\begin{Proposition}\cite[Lemma 3.4]{je}\label{2.20}
Let $1<p<\infty$ and $Z=\{a_j\}$ be an $(r, \kappa)$-lattice. Then $\left(T_p^1(Z)\right)^* \cong T_{p^{\prime}}^{\infty}(Z)$. The isomorphism between $\left(T_p^1(Z)\right)^*$ and $T_{p^{\prime}}^{\infty}(Z)$ is given by the operator as in Proposition \ref{2.18}.
In fact, $$\|\{d_j\}\|_{T^\infty_{p^{\prime}}(Z)} \asymp \sup \left\{\left|\sum_j c_j d_j(1-|a_j|^2)\right|:\|\{c_j\}\|_{T_p^1(Z)}=1\right\}.$$
\end{Proposition}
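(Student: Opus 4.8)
The plan is to prove the two halves of the isomorphism separately, leaning on the factorization of tent spaces of sequences (Proposition \ref{2.16}) and the $T_p^q$--$T_{p'}^{q'}$ duality for $q=2$ (Proposition \ref{2.18}), so as to avoid the Coifman--Meyer--Stein stopping-time machinery entirely. \textbf{First half (the pairing is bounded).} Given $\{d_j\}\in T_{p'}^\infty(Z)$, I want $\bigl|\sum_j c_j d_j(1-|a_j|^2)\bigr|\lesssim \|\{c_j\}\|_{T_p^1(Z)}\|\{d_j\}\|_{T_{p'}^\infty(Z)}$. Since $1<p<\infty$, Proposition \ref{2.16} applied with $(p_1,q_1)=(p,2)$, $(p_2,q_2)=(\infty,2)$ gives $T_p^1(Z)=T_p^2(Z)\cdot T_\infty^2(Z)$, and applied with $(p_1,q_1)=(\infty,2)$, $(p_2,q_2)=(p',\infty)$ gives $T_{p'}^2(Z)=T_\infty^2(Z)\cdot T_{p'}^\infty(Z)$; one should check these exponent tuples are admissible and that the factorization is quantitative (product of sets with the infimum norm). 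Then, factoring $c_j=c_j'c_j''$ with $\|\{c_j'\}\|_{T_p^2(Z)}\|\{c_j''\}\|_{T_\infty^2(Z)}\lesssim\|\{c_j\}\|_{T_p^1(Z)}$, the sequence $\{c_j''d_j\}$ lies in $T_{p'}^2(Z)$ with $\|\{c_j''d_j\}\|_{T_{p'}^2(Z)}\lesssim\|\{c_j''\}\|_{T_\infty^2(Z)}\|\{d_j\}\|_{T_{p'}^\infty(Z)}$, and Proposition \ref{2.18} with $q=2$ yields
$$
\Bigl|\sum_j c_j d_j(1-|a_j|^2)\Bigr|=\Bigl|\sum_j c_j'\,(c_j''d_j)(1-|a_j|^2)\Bigr|\lesssim \|\{c_j'\}\|_{T_p^2(Z)}\|\{c_j''d_j\}\|_{T_{p'}^2(Z)}\lesssim \|\{c_j\}\|_{T_p^1(Z)}\|\{d_j\}\|_{T_{p'}^\infty(Z)}.
$$
Taking the infimum over factorizations gives the bound, and injectivity of $\{d_j\}\mapsto\langle\,\cdot\,,\{d_j\}\rangle$ follows by testing on the unit sequences $e_j$.

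\textbf{Second half (every functional is represented).} Let $\Lambda\in(T_p^1(Z))^*$. I would first observe that the finitely supported sequences are dense in $T_p^1(Z)$: truncations converge by dominated convergence, dominated by the area function $\bigl(\sum_{a_j\in\Gamma(\eta)}|c_j|^p\bigr)^{1/p}$, which is finite for a.e.\ $\eta$. Set $d_j=\Lambda(e_j)/(1-|a_j|^2)$, so that $\Lambda(\{c_j\})=\sum_j c_jd_j(1-|a_j|^2)$ for finitely supported $\{c_j\}$. To estimate $\|\{d_j\}\|_{T_{p'}^\infty(Z)}$, fix $u\in\DD$ and a finite set $F\subset\{j:a_j\in S(u)\}$, and test $\Lambda$ against $c_j=|d_j|^{p'-2}\overline{d_j}\,\mathbf 1_{\{j\in F\}}$ (with $c_j=0$ when $d_j=0$), so that $c_jd_j=|d_j|^{p'}$ and $|c_j|^p=|d_j|^{p'}$, giving $\sum_{j\in F}|d_j|^{p'}(1-|a_j|^2)=\Lambda(\{c_j\})\le \|\Lambda\|\,\|\{c_j\}\|_{T_p^1(Z)}$. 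Using the geometric fact that $a_j\in S(u)\cap\Gamma(\eta)$ confines $\eta$ to an arc of length $\asymp 1-|u|$, together with $|\{\eta:a_j\in\Gamma(\eta)\}|\asymp 1-|a_j|^2$, Hölder's inequality on that arc yields $\|\{c_j\}\|_{T_p^1(Z)}\lesssim (1-|u|^2)^{1/p'}\bigl(\sum_{j\in F}|d_j|^{p'}(1-|a_j|^2)\bigr)^{1/p}$. Since $F$ is finite, $Y_F:=\sum_{j\in F}|d_j|^{p'}(1-|a_j|^2)<\infty$, and dividing gives $Y_F\lesssim \|\Lambda\|^{p'}(1-|u|^2)$; letting $F$ exhaust $\{j:a_j\in S(u)\}$ and taking the supremum over $u\in\DD$ (the $\operatorname{esssup}_\eta\sup_{u\in\Gamma(\eta)}$ in the definition of $T_{p'}^\infty(Z)$ equals $\sup_{u\in\DD}$ up to a constant, since $\{\eta:u\in\Gamma(\eta)\}$ has positive measure) gives $\|\{d_j\}\|_{T_{p'}^\infty(Z)}\lesssim\|\Lambda\|$. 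Finally, $\Lambda$ and $\langle\,\cdot\,,\{d_j\}\rangle$ are bounded functionals on $T_p^1(Z)$ — the latter by the first half — agreeing on a dense subspace, hence they coincide; combined with the first half this gives the isomorphism and the norm equivalence in ``In fact''.

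I expect the difficulty to be bookkeeping rather than anything deep. On the first half the only care needed is verifying the admissibility and the quantitative nature of the factorizations fed into Proposition \ref{2.16}. On the second half the one genuinely delicate point is the Hölder estimate for $\|\{c_j\}\|_{T_p^1(Z)}$ of the test sequence, which rests on the lattice geometry pinning the relevant $\eta$'s into an arc comparable to $1-|u|$; here one may first need to enlarge the aperture of $\Gamma$ via Lemma \ref{2.9}, which is harmless since all apertures produce equivalent quasinorms. (Should one insist on a proof of the first half independent of the factorization theorem, the substitute is the Coifman--Meyer--Stein stopping-time decomposition along the level sets $O_k=\{\eta:(\sum_{a_j\in\Gamma(\eta)}|c_j|^p)^{1/p}>2^k\}$ and their tents, which is the real technical heart of tent-space duality; but in the present setup Proposition \ref{2.16} makes this unnecessary.)
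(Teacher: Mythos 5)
The paper does not actually prove this proposition: it is imported verbatim as \cite[Lemma 3.4]{je}, so there is no internal argument to compare against. Your proposal is, as far as I can check, a correct self-contained derivation from the other black-box results of Section 2, and it is a genuinely different route from the source. For the boundedness of the pairing you replace the Coifman--Meyer--Stein stopping-time decomposition (which is what the cited source and its antecedents use) by two applications of the factorization Proposition \ref{2.16} together with the reflexive-range duality of Proposition \ref{2.18} at $q=2$; the exponent tuples $(p,2)\cdot(\infty,2)=(p,1)$ and $(\infty,2)\cdot(p',\infty)=(p',2)$ are admissible, and the pointwise multiplication estimate you need, $T_\infty^2(Z)\cdot T_{p'}^\infty(Z)\subset T_{p'}^2(Z)$, is the containment direction of Proposition \ref{2.16}. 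You are right to flag this as the point carrying all the weight: at the endpoint $q_2=\infty$ that containment is itself a Carleson-embedding statement, so your argument has not eliminated the hard analysis, only relocated it into a result the paper already quotes — which is a legitimate economy here, but worth saying explicitly. For the converse inequality your testing argument is sound: with $c_j=|d_j|^{p'-2}\overline{d_j}\mathbf{1}_F$ one has $|c_j|^p=|d_j|^{p'}=c_jd_j$, the set of $\eta$ with $\Gamma(\eta)\cap S(u)\neq\emptyset$ is an arc of length $\lesssim 1-|u|$, and Fubini plus $|\{\eta: a_j\in\Gamma(\eta)\}|\asymp 1-|a_j|^2$ gives exactly the H\"older bound $\|\{c_j\}\|_{T_p^1(Z)}\lesssim (1-|u|^2)^{1/p'}Y_F^{1/p}$, after which the finiteness of $Y_F$ justifies the division. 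The density of finitely supported sequences and the identification of $\Lambda$ with the pairing then close the argument. What your approach buys is independence from the stopping-time machinery at the cost of leaning on Proposition \ref{2.16} at an $\infty$-endpoint; what the cited proof buys is a direct argument that does not presuppose the factorization theorem.
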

	
	
\begin{Proposition}\cite[Proposition 2.21]{ag1}\label{2.21}
If either $0<p<\infty, 0<q<1$ or $0<p \leq 1, q=1$, let $Z=\{a_j\}$ be an $(r, \kappa)$-lattice, then
$$
\sup \left\{\left|\sum_j c_j d_j(1-|a_j|^2)\right|:\|\{d_j\}\|_{T_p^q(Z)}=1\right\} \asymp \sup _k|c_j|(1-|a_j|^2)^{1-\frac{1}{q}}
$$
for any sequence $\{c_j\}$.
\end{Proposition}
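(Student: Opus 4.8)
The identity is a weak-type duality statement for $T_p^q(Z)$ in the quasi-Banach range $q\le 1$, and none of the duality results quoted above (Propositions \ref{2.18}--\ref{2.20}, which between them require $q>1$, or else $q=1$ with $p>1$) covers it; this is precisely what makes the range in Proposition \ref{2.21} different. The plan is to prove the two comparisons separately. Put $M:=\sup_j|c_j|(1-|a_j|^2)^{1-\frac1q}$, and assume, as we may, that all $c_j,d_j$ are nonnegative and that $\{d_j\}$ is finitely supported. For ``$\gtrsim$'', fix $j_0$ and test against the one-point sequence $\{\delta_{j,j_0}\}$. Since $\{\eta\in\mathbb{T}:a_{j_0}\in\Gamma_\zeta(\eta)\}$ is an arc of length $\asymp 1-|a_{j_0}|^2$, the definition of $T_p^q(Z)$ (the same formula applies when $p=\infty$) gives $\|\{\delta_{j,j_0}\}\|_{T_p^q(Z)}^q\asymp 1-|a_{j_0}|^2$; normalising and pairing against $\{c_j\}$ produces the value $c_{j_0}(1-|a_{j_0}|^2)\,\|\{\delta_{j,j_0}\}\|_{T_p^q(Z)}^{-1}\asymp c_{j_0}(1-|a_{j_0}|^2)^{1-\frac1q}$, and the supremum over $j_0$ finishes ``$\gtrsim$''.

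For ``$\lesssim$'', writing $c_j\le M(1-|a_j|^2)^{\frac1q-1}$ reduces the claim to
\begin{equation*}
\sum_j d_j\,(1-|a_j|^2)^{\frac1q}\ \lesssim\ \|\{d_j\}\|_{T_p^q(Z)}\qquad(0<q\le1).
\end{equation*}
Here one should resist raising to the power $q$ and using $q$-subadditivity: that would demand $\sum_j d_j^q(1-|a_j|^2)\lesssim\|\{d_j\}\|_{T_p^q(Z)}^q$, i.e.\ (by Fubini, using $1-|a_j|^2\asymp|\{\eta:a_j\in\Gamma(\eta)\}|$) $\int_{\mathbb{T}}\sum_{a_j\in\Gamma(\eta)}d_j^q\,|d\eta|\lesssim\int_{\mathbb{T}}\big(\sum_{a_j\in\Gamma(\eta)}d_j^p\big)^{q/p}|d\eta|$, which is false once $q<p$: a truncated Carleson-box configuration $d_j=\mathbf{1}_{\{a_j\in S(I)\}}$ (kept up to a deep generation) makes the left side exceed the right by an arbitrarily large factor. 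Instead I would invoke the atomic decomposition of $T_p^q(Z)$ available for $q\le1$ (the sequence analogue of the Coifman--Meyer--Stein decomposition): there is a decomposition $\{d_j\}=\sum_k\lambda_k\{b^{(k)}_j\}$ with $\sum_k|\lambda_k|^q\lesssim\|\{d_j\}\|_{T_p^q(Z)}^q$, in which each atom $\{b^{(k)}_j\}$ is supported on the lattice points in a Carleson box $S(I_k)$ and is normalised by $\big(\sum_j|b^{(k)}_j|^p\big)^{1/p}\le|I_k|^{-1/q}$ (with $\sup_j|b^{(k)}_j|\le|I_k|^{-1/q}$ when $p=\infty$).

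Then $\big|\sum_j c_jd_j(1-|a_j|^2)\big|\le\sum_k|\lambda_k|\,\big|\sum_j c_jb^{(k)}_j(1-|a_j|^2)\big|$, and $\sum_k|\lambda_k|\le\big(\sum_k|\lambda_k|^q\big)^{1/q}\lesssim\|\{d_j\}\|_{T_p^q(Z)}$ because $q\le1$; so everything comes down to the single-atom bound $\big|\sum_j c_jb_j(1-|a_j|^2)\big|\lesssim M$, i.e.\ $\sum_{a_j\in S(I)}b_j(1-|a_j|^2)^{\frac1q}\lesssim 1$. For $1<p<\infty$ this is Hölder together with the lattice count $\#\{j:a_j\in S(I),\ 1-|a_j|\asymp 2^{-n}\}\asymp 2^n|I|$: one gets $\sum_{a_j\in S(I)}(1-|a_j|^2)^{p'/q}\asymp|I|\sum_{2^{-n}\lesssim|I|}2^{n(1-p'/q)}\asymp|I|^{p'/q}$ (the series being convergent since $p'/q>1$), hence $\sum_{a_j\in S(I)}b_j(1-|a_j|^2)^{\frac1q}\le|I|^{-1/q}\cdot|I|^{1/q}=1$. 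The case $p=\infty$ is identical with $\sup_jb_j$ pulled out and the convergent series $\sum_{2^{-n}\lesssim|I|}2^{n(1-1/q)}$ (valid because $q<1$), and the case $q=1,\ p\le1$ uses $\sum_j b_j\le(\sum_j b_j^p)^{1/p}$ together with $1-|a_j|^2\lesssim|I|$ on $S(I)$. Combining the two comparisons gives the assertion.

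The one genuinely non-routine ingredient is the passage to the atomic decomposition: it is what substitutes for the Banach-space duality that is unavailable in this range, and the $\ell^q$-summability — not merely $\ell^1$-summability — of the coefficients $\lambda_k$ is exactly what lets one convert the quasi-norm bound on $\{d_j\}$ into $\sum_k|\lambda_k|\lesssim\|\{d_j\}\|_{T_p^q(Z)}$. Everything else is the arc-length estimate for one-point sequences plus short geometric-series computations, once the standard counting facts for $(r,\kappa)$-lattices are in hand — the number of lattice points of a given generation inside $S(I)$, and the boundedly many lattice points of a given generation inside a fixed cone $\Gamma(\eta)$ (which follows from separation, cf.\ Proposition \ref{2.7}).
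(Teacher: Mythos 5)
First, a remark on the comparison itself: the paper offers no proof of Proposition \ref{2.21} --- it is quoted verbatim from \cite[Proposition 2.21]{ag1} --- so your argument can only be judged on its own terms. Your lower bound (testing with normalized one-point sequences, using $|\{\eta: a_{j_0}\in\Gamma(\eta)\}|\asymp 1-|a_{j_0}|^2$) and the reduction of the upper bound to $\sum_j d_j(1-|a_j|^2)^{1/q}\lesssim\|\{d_j\}\|_{T_p^q(Z)}$ are both correct, as is your observation that crude $q$-subadditivity fails when $q<p$.

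The gap is in the atomic decomposition you invoke, specifically its normalization. The CMS-style stopping-time construction for sequence tent spaces (stopping sets $\{A_p>2^k\}$, Whitney arcs, coefficients $\lambda_{k,i}=2^k|I_{k,i}|^{1/q}$) produces atoms satisfying the \emph{weighted} bound $\bigl(\sum_j|b_j|^p(1-|a_j|^2)\bigr)^{1/p}\le|I|^{1/p-1/q}$ together with $\sup_j|b_j|\lesssim|I|^{-1/q}$; it does not, and cannot, produce your unweighted normalization $\bigl(\sum_j|b_j|^p\bigr)^{1/p}\le|I|^{-1/q}$. Indeed the sequence equal to $|I|^{-1/q}$ on the $\asymp 2^n|I|$ lattice points of generation $2^{-n}\ll|I|$ inside $S(I)$ satisfies both achievable bounds with constants independent of $n$, yet violates the unweighted one by the factor $(2^n|I|)^{1/p}$. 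This matters because several of your single-atom estimates lean on the unachievable normalization: your $q=1,\ p\le1$ case uses $\sum_jb_j\le(\sum_jb_j^p)^{1/p}\le|I|^{-1}$ directly, and the case $0<p\le1,\ q<1$ (which is in the hypotheses but which you never address --- note also that $p=\infty$, which you do address, is not) would need it too, since with only the weighted bound the estimate $\sum_jb_j^p(1-|a_j|^2)^{p/q}\le|I|^{p/q-1}\sum_jb_j^p(1-|a_j|^2)$ goes the wrong way when $p<q$. The repair is twofold. For $q<p$ keep the decomposition but use the achievable normalizations: Hölder exactly as you wrote it when $p>1$ (your geometric series computation is fine and only needs $q<1$), and for $p\le1$ combine $p$-subadditivity with $b_j\le b_j^p\,(\sup_i b_i)^{1-p}$ so that both the weighted $\ell^p$ bound and the $\sup$ bound enter. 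For $p\le q$ --- which contains the entire $q=1$ alternative of the proposition --- no decomposition is needed at all: the route you told yourself to resist works verbatim there, since $\bigl(\sum_jd_j(1-|a_j|^2)^{1/q}\bigr)^q\le\sum_jd_j^q(1-|a_j|^2)\asymp\int_{\mathbb{T}}\sum_{a_j\in\Gamma(\eta)}d_j^q\,|d\eta|\le\int_{\mathbb{T}}\bigl(\sum_{a_j\in\Gamma(\eta)}d_j^p\bigr)^{q/p}|d\eta|$ by the monotonicity of $\ell^p$ norms when $p\le q$; your counterexample to this chain genuinely requires $q<p$.
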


We also need the following two lemmas.

\begin{Lemma}\cite[Theorem 1.1]{wz2}\label{th1}
Let $0<p,q,s<\infty$, $\a>-2$, $m\in\NN\cup\{0\}$ and let $\mu$ be a positive Borel measure on $\D$. Then differential type operator $D^{(m)}:AT_p^q(\a)\to L^s(\mu)$ is bounded if and only if any $r\in(0,1)$, one of the following conditions holds:
	
(a) If $q>s$ and $p>s$, then $\Psi_\mu\in T^{\frac{q}{q-s}}_{\frac{p}{p-s}}$;
	
(b) If $p\le s<q$, then $\Psi_\mu(\cdot)(1-|\cdot|^2)^{2(1-\frac{s}{p})}\in T^{\frac{q}{q-s}}_\infty$;
	
(c) If $q=s<p$, then $\Psi_\mu\in T^\infty_{\frac{p}{p-s}}$;
	
(d) If $p\le q=s$ or $q<s$, then $\Psi_\mu(\cdot)(1-|\cdot|)^{3-\frac{2s}{p}-\frac{s}{q}}\in L^\infty$.\\
Here, $\Psi_\mu(z)=\mu(D(z,r))(1-|z|^2)^{-3-ms-\frac{s\a}{p}}$.
\end{Lemma}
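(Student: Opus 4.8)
The plan is to reduce the assertion to a Carleson embedding for a single Bergman-type weight, to discretize that embedding against an $(r,\kappa)$-lattice, and then to read off the four regimes from the known duals of tent spaces of sequences. The first move is to set $\beta=mp+\alpha$ and apply Lemma \ref{Tfn}: since $f\in AT_p^q(\alpha)$ if and only if $f^{(m)}\in AT_p^q(\beta)$, with $\|f\|_{AT_p^q(\alpha)}\asymp\sum_{0\le j<m}|f^{(j)}(0)|+\|f^{(m)}\|_{AT_p^q(\beta)}$, and since every $g\in AT_p^q(\beta)$ is $f^{(m)}$ for some $f\in AT_p^q(\alpha)$ whose first $m$ Taylor coefficients vanish, the operator $D^{(m)}\colon AT_p^q(\alpha)\to L^s(\mu)$ is bounded if and only if the identity embedding $I_d\colon AT_p^q(\beta)\to L^s(\mu)$ is bounded. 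Because $-3-ms-\tfrac{s\alpha}{p}=-3-\tfrac{s\beta}{p}$, in this reformulation $\Psi_\mu(z)=\mu(D(z,r))(1-|z|^2)^{-3-s\beta/p}$ and each of (a)--(d) becomes the corresponding statement with $\alpha$ replaced by $\beta$; from now on I work with $I_d$ and $\beta$.

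Next, fix an $(r,\kappa)$-lattice $Z=\{a_j\}$, so $\D=\bigcup_j D(a_j,r)$. The weighted sub-mean value property (as in Lemma \ref{fn}, via \cite[Proposition 4.13]{z1}) gives, for $z\in D(a_j,r)$, $|f(z)|^p\lesssim(1-|a_j|^2)^{-\beta-2}b_j^p$ with $b_j^p:=\int_{D(a_j,2r)}|f(w)|^p(1-|w|^2)^{\beta}\,dA(w)$. Covering $\D$ by the discs $D(a_j,r)$ and using the finite overlap of dilated discs (Proposition \ref{2.7}) together with Remark \ref{remark},
\[
\int_\D|f|^s\,d\mu\ \le\ \sum_j\mu(D(a_j,r))\,\sup_{D(a_j,r)}|f|^s\ \lesssim\ \sum_j d_j\,b_j^s\,(1-|a_j|^2),\qquad d_j:=\Psi_\mu(a_j)(1-|a_j|^2)^{2-2s/p},
\]
while also $\|\{b_j\}\|_{T_p^q(Z)}\lesssim\|f\|_{AT_p^q(\beta)}$ (again by finite overlap and Remark \ref{remark}). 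Hence if $\{d_j\}$ induces a bounded functional on $T_{p/s}^{q/s}(Z)$ through the pairing $\langle\{c_j\},\{d_j\}\rangle=\sum_j c_jd_j(1-|a_j|^2)$, then with $z_j=b_j^s$ and $\|\{b_j^s\}\|_{T_{p/s}^{q/s}(Z)}=\|\{b_j\}\|_{T_p^q(Z)}^s$ one gets $\|f\|_{L^s(\mu)}\lesssim\|f\|_{AT_p^q(\beta)}$, which is the sufficiency. For the necessity I would test on $f_\varepsilon(z)=\sum_j\varepsilon_jc_j\,(1-|a_j|)^{L}(1-\overline{a_j}z)^{-L-(\beta+2)/p}$ with $\|\{c_j\}\|_{T_p^q(Z)}\le1$, $L>\max\{1,\tfrac pq,\tfrac1q,\tfrac1p\}$ and $\{\varepsilon_j\}$ Rademacher: Proposition \ref{2.23} gives $\|f_\varepsilon\|_{AT_p^q(\beta)}\lesssim1$ uniformly in $\varepsilon$, and Khinchine's inequality combined with $|(1-|a_j|)^{L}(1-\overline{a_j}z)^{-L-(\beta+2)/p}|\asymp(1-|a_j|^2)^{-(\beta+2)/p}$ on $D(a_j,r)$ and Proposition \ref{2.7} yields $\sum_j d_j|c_j|^s(1-|a_j|^2)\lesssim\mathbb{E}_\varepsilon\|f_\varepsilon\|_{L^s(\mu)}^s\le\|I_d\|^s$. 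Thus $I_d$ is bounded if and only if $\{d_j\}$ is a bounded functional on $T_{p/s}^{q/s}(Z)$.

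It then remains to describe this condition in each parameter range and to translate the resulting discrete condition on $\{\Psi_\mu(a_j)(1-|a_j|^2)^{2-2s/p}\}$ back to a condition on $\Psi_\mu$, using the standard comparison $\|\{\Psi_\mu(a_j)(1-|a_j|^2)^{2/u}\}\|_{T_u^v(Z)}\asymp\|\Psi_\mu\|_{T_u^v}$ (with the analogues on the $T_\infty^v$ and $L^\infty$ scales), valid for the averaging function $\Psi_\mu$ because $\mu(D(\cdot,r))$ is comparable to its hyperbolic averages once the radius is allowed to vary, all conditions being radius-independent; one also uses Lemma \ref{2.13} to absorb aperture changes. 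When $q>s$ and $p>s$, both $\tfrac ps,\tfrac qs>1$ and $(T_{p/s}^{q/s}(Z))^{*}\cong T_{p/(p-s)}^{q/(q-s)}(Z)$ by Proposition \ref{2.18}; since $2-\tfrac{2s}{p}=\tfrac{2}{p/(p-s)}$ this is exactly $\Psi_\mu\in T_{p/(p-s)}^{q/(q-s)}$, i.e.\ (a). When $p\le s<q$, $\tfrac ps\le1<\tfrac qs$ and $(T_{p/s}^{q/s}(Z))^{*}\cong T_\infty^{q/(q-s)}(Z)$ (Proposition \ref{2.19} if $p<s$, Proposition \ref{2.18} with first exponent $1$ if $p=s$); as the $T_\infty$-scale carries no weight the factor $2-\tfrac{2s}{p}=2(1-\tfrac sp)$ stays on $\Psi_\mu$, giving (b). When $q=s<p$, $\tfrac qs=1<\tfrac ps$ and $(T_{p/s}^{1}(Z))^{*}\cong T_{p/(p-s)}^{\infty}(Z)$ by Proposition \ref{2.20}, and matching the weight as in (a) gives $\Psi_\mu\in T_{p/(p-s)}^{\infty}$, i.e.\ (c). Finally, when $p\le q=s$ or $q<s$, Proposition \ref{2.21} says $\{d_j\}$ is a bounded functional on $T_{p/s}^{q/s}(Z)$ exactly when $\sup_j|d_j|(1-|a_j|^2)^{1-s/q}<\infty$, and since $|d_j|(1-|a_j|^2)^{1-s/q}=\Psi_\mu(a_j)(1-|a_j|^2)^{3-2s/p-s/q}$ this is $\Psi_\mu(\cdot)(1-|\cdot|)^{3-2s/p-s/q}\in L^\infty$, i.e.\ (d).

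The routine ingredients are the reduction in the first step and the covering/overlap estimates in the discretization. The two points that need care are the Khinchine--Rademacher lower bound in the necessity half (the standard device turning the analytic test family into a genuine lower bound for $\int|f|^s\,d\mu$), and, above all, the continuous--discrete comparison for the averaging function $\Psi_\mu$, where the radius-independence of the condition must be exploited to pass between $\mu(D(\cdot,r))$ for different radii; once that is in place, carrying the weight $(1-|a_j|^2)^{2-2s/p}$ through the four different duals is only bookkeeping.
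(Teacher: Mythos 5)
This lemma is not proved in the paper at all: it is imported verbatim as \cite[Theorem 1.1]{wz2}, so there is no in-paper argument to compare against. Judged on its own terms, your proposal is a sound reconstruction of the standard Luecking-type proof that the cited reference uses: reduce $D^{(m)}$ to the identity embedding on $AT_p^q(mp+\alpha)$ via Lemma \ref{Tfn} (the exponent bookkeeping $-3-ms-\tfrac{s\alpha}{p}=-3-\tfrac{s\beta}{p}$ is right), discretize against an $(r,\kappa)$-lattice using the sub-mean-value property and Proposition \ref{2.7}, prove necessity with the Rademacher/Khinchine test family from Proposition \ref{2.23}, and identify the resulting dual pairing condition through Propositions \ref{2.18}--\ref{2.21}; your weight accounting in the four regimes ($2-\tfrac{2s}{p}$ on the $T$-scale, the extra $(1-|a_j|^2)^{1-s/q}$ from Proposition \ref{2.21} producing the exponent $3-\tfrac{2s}{p}-\tfrac{s}{q}$) checks out against (a)--(d). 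A pleasant by-product is that your intermediate discrete condition on $\{\Psi_\mu(a_j)(1-|a_j|^2)^{2-2s/p}\}$ is exactly the companion Lemma \ref{th2}, so the argument establishes both quoted results at once. The two places where your sketch leans on "standard" facts and would need to be written out are the ones you already flag: the Khinchine lower bound only yields $\mu(D(a_j,\kappa))$ over the pairwise disjoint discs, so passing to $\mu(D(\cdot,r))$ for an arbitrary $r$ requires the usual finite-covering/radius-change argument, and the continuous--discrete comparison for the averaging function $\Psi_\mu$ requires $\mu(D(z,r))\asymp\mu(D(a_j,r'))$ for $z\in D(a_j,r)$ with a slightly enlarged $r'$, again absorbed by radius-independence and Lemma \ref{2.13}. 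With those details filled in, the proof is complete and is essentially the one in \cite{wz2} and \cite{ag1}.
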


\begin{Lemma} \cite[Theorems 4.2 and 4.3]{ag1}  or  \cite[Corollary 3.4]{wz2}\label{th2}
Let $0<p,q,s<\infty$, $\a>-2$, $m\in\NN\cup\{0\}$ and let $\mu$ be a positive Borel measure on $\D$. Then the following conditions are equivalent:\\
(1) $D^{(m)}:AT_p^q(\a)\to L^s(\mu)$ is bounded;\\
(2) For any $r\in(0,1)$ and an $(r,\kappa)$-lattice $Z=\{a_j\}$, the sequence 
$$\hat{\mu}=\{\hat{\mu}_j\}=\left\{ \frac{\mu(D(a_j,r))}{(1-|a_j|^2)^{\frac{s(\a+2)}{p}+sm+1}}  \right\}
$$
satisfies one of the following conditions:
	
(a) If $q>s$ and $p>s$, then $\hat{\mu}\in T^{\frac{q}{q-s}}_{\frac{p}{p-s}}(Z)$;
	
(b) If $p\le s<q$, then $\hat{\mu}\in T^{\frac{q}{q-s}}_\infty(Z)$;
	
(c) If $q=s<p$, then $\hat{\mu}\in T^\infty_{\frac{p}{p-s}}(Z)$;
	
(d) If $p\le q=s$ or $q<s$, then $\left\{\hat{\mu}_j\cdot(1-|a_j|^2)^{1-\frac{s}{q}}\right\}\in l^\infty$.
\end{Lemma}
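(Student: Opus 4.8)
\medskip
\noindent\textbf{Proof proposal.} The plan is to deduce the equivalence from Lemma~\ref{th1}. That lemma already characterizes the boundedness of $D^{(m)}:AT_p^q(\a)\to L^s(\mu)$ by membership of
$$
\Psi_\mu(z)=\mu(D(z,r))(1-|z|^2)^{-3-ms-\frac{s\a}{p}}
$$
in an appropriate tent space of functions (cases (a)--(c)) or in $L^\infty$ (case (d)), and the four cases (a)--(d) exhaust all $(p,q,s)\in(0,\infty)^3$ (split according to whether $s<q$, $s=q$ or $s>q$, and then according to whether $p>s$ or $p\le s$), so in every regime exactly one of them applies. Hence it suffices to prove, case by case, that the condition on $\Psi_\mu$ furnished by Lemma~\ref{th1} is equivalent to the stated condition on the sequence $\hat\mu=\{\hat\mu_j\}$, $\hat\mu_j=\mu(D(a_j,r))(1-|a_j|^2)^{-d}$ with $d=\frac{s(\a+2)}{p}+sm+1$.

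The bridge is the standard discretization of tent spaces of functions by their values on an $(r,\kappa)$-lattice. I would first record the geometric ingredients: $\mathbb{D}=\bigcup_j D(a_j,r)$, $|D(a_j,r)|\asymp(1-|a_j|^2)^2$, each point of $\mathbb{D}$ lies in a bounded number of dilated discs $D(a_j,2r)$ by Proposition~\ref{2.7}, and $z\mapsto\mu(D(z,r))(1-|z|^2)^{-c}$ is, after harmlessly enlarging the radius, essentially constant on each $D(a_j,r)$. Feeding these into the definitions, together with Lemma~\ref{2.9} and Remark~\ref{remark} to pass between apertures (and Lemma~\ref{2.13} for the $\sup$-type norm), I would obtain, for $0<v,u<\infty$ and any nonnegative $G$ of the above form,
$$
\|G\|_{T_v^u}\asymp\Big\|\big\{G(a_j)(1-|a_j|^2)^{\frac2v}\big\}\Big\|_{T_v^u(Z)},
$$
together with the limiting versions $\|G\|_{T_\infty^u}\asymp\|\{G(a_j)\}\|_{T_\infty^u(Z)}$, $\|G\|_{T_v^\infty}\asymp\|\{G(a_j)(1-|a_j|^2)^{2/v}\}\|_{T_v^\infty(Z)}$ (this one also needing that the Carleson boxes $S(u)$ behave controllably under a hyperbolic perturbation), and $\|G\|_{L^\infty}\asymp\sup_j G(a_j)$.

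What is then left is a matching of exponents. With $c=3+ms+\frac{s\a}{p}$, so $\Psi_\mu(z)=\mu(D(z,r))(1-|z|^2)^{-c}$, one has $\frac2v-c=-d$ exactly when $v=\frac p{p-s}$. Thus in case (a) ($v=\frac p{p-s}$, $u=\frac q{q-s}$) and in case (c) ($v=\frac p{p-s}$, $u=\infty$) the lattice values satisfy $\Psi_\mu(a_j)(1-|a_j|^2)^{2/v}=\hat\mu_j$, and the equivalences above convert Lemma~\ref{th1}(a),(c) into $\hat\mu\in T^{q/(q-s)}_{p/(p-s)}(Z)$ and $\hat\mu\in T^\infty_{p/(p-s)}(Z)$. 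In case (b) one computes $\Psi_\mu(z)(1-|z|^2)^{2(1-\frac sp)}=\mu(D(z,r))(1-|z|^2)^{-d}$, whose lattice values are exactly $\hat\mu_j$, so the $T^{q/(q-s)}_\infty$-condition of Lemma~\ref{th1}(b) becomes $\hat\mu\in T^{q/(q-s)}_\infty(Z)$. In case (d), $\Psi_\mu(z)(1-|z|^2)^{3-\frac{2s}{p}-\frac sq}=\big(\mu(D(z,r))(1-|z|^2)^{-d}\big)(1-|z|^2)^{1-\frac sq}$, so the $L^\infty$-condition of Lemma~\ref{th1}(d) is equivalent to $\{\hat\mu_j(1-|a_j|^2)^{1-s/q}\}\in\ell^\infty$. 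Assembling the four cases yields the equivalence of (1) and (2).

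The only real obstacle I anticipate is the bookkeeping inside the discretization: since $\mu(D(z,r))\le\mu(D(a_j,2r))$ only for the dilated disc, one must use that each of the conditions in Lemma~\ref{th1} and in the present lemma is independent of the radius $r$ (equivalently, of the aperture of the cones $\Gamma(\eta)$ and the scale of the boxes $S(u)$)---which is precisely why both statements carry the clause ``for any $r\in(0,1)$''. This is verified by comparing a dilated lattice disc with a bounded union of discs of a finer lattice via Proposition~\ref{2.7}, together with the aperture-independence of the tent quasinorms recalled in Sections~1 and~2.
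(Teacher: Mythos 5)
The paper does not prove this lemma at all: it is imported verbatim from the literature (Theorems 4.2 and 4.3 of \cite{ag1}, or Corollary 3.4 of \cite{wz2}), so there is no in-paper argument to compare against. Your proposal supplies a genuine derivation, reducing the discrete statement to the continuous one in Lemma \ref{th1} via the standard lattice discretization, and I checked the exponent bookkeeping: with $c=3+ms+\frac{s\a}{p}$ and $d=\frac{s(\a+2)}{p}+sm+1$ one has $c-d=2(1-\frac{s}{p})=\frac{2}{v}$ for $v=\frac{p}{p-s}$, so in cases (a) and (c) the lattice values $\Psi_\mu(a_j)(1-|a_j|^2)^{2/v}$ are exactly $\hat\mu_j$, in case (b) the weighted function $\Psi_\mu(\cdot)(1-|\cdot|^2)^{2(1-s/p)}$ has lattice values $\hat\mu_j$, and in case (d) the extra factor $(1-|z|^2)^{1-s/q}$ comes out correctly; the four parameter regimes are also exhaustive. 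This is essentially how the cited sources pass between the function condition on $\Psi_\mu$ and the sequence condition on $\hat\mu$, so your route is the natural one. The only step that deserves more than a wave is the one you flag yourself: $\mu(D(z,r))$ is comparable to $\mu(D(a_j,r'))$ for $z\in D(a_j,r)$ only after enlarging the radius to $r'=2r$, so the two-sided comparison of quasinorms really produces a different radius and a different aperture on each side, and you must invoke (i) the aperture-independence of the tent quasinorms, (ii) the finite-overlap bound of Proposition \ref{2.7}, and (iii) the fact that both Lemma \ref{th1} and the present statement are quantified over all $r\in(0,1)$, so radius-independence is already built into the cited hypotheses rather than something you need to establish from scratch. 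With that understood, your argument is sound; it trades a citation for a short, checkable proof.
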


\section{The boundedness of $T_g^{n,k}:AT_p^q(\alpha)\to AT_t^s(\beta)$ }

The purpose of this section is to study  the boundedness  of the operator $T_g^{n,k}:AT_p^q(\a)\to AT_t^s(\b)$.
Now, we are in a position to state and prove the main result in this section.

\begin{Theorem}\label{th3}
Let $0<p,q,s,t<\infty$, $\a,\b>-2$, $g\in H(\D)$, $n\in\NN$  and $k\in\NN\cup\{0\}$ such that $0\le  k< n$ and $(n-k)pt+p(\b+2)-t(\a+2)>0$. 
Then $T_g^{n,k}:AT_p^q(\a)\to AT_t^s(\b)$ is bounded if and only if one of the following conditions holds:
	
(a) If $0<s<q<\infty$, $0<t<p<\infty$, 
$g^{(n-k)}\in AT_{\frac{pt}{p-t}}^{\frac{qs}{q-s}}(\delta),$ where $\delta=\left(n-k+\frac{\b}{t}-\frac{\a}{p} \right)\frac{pt}{p-t}$.
	
(b) If $0<s<q<\infty$, $0<p \leq t<\infty$, 
$
g^{(n-k)}(\cdot)(1-|\cdot|^2)^{n-k+\frac{\b+2}{t}-\frac{\a+2}{p}}\in AT_\infty^{\frac{qs}{q-s}}.
$
	
(c) If $0<q=s<\infty$, $0<t<p<\infty$, $g^{(n-k)}\in AT_{\frac{pt}{p-t}}^{\infty}(\delta),$ where $\delta=\left(n-k+\frac{\b}{t}-\frac{\a}{p} \right)\frac{pt}{p-t}$.
	
(d) If $0<q<s<\infty$, $0<p,t <\infty$ or $0<q=s<\infty$, $0<p\leq t<\infty$, 
$$\sup_{z\in \DD}|g^{(n-k)}(z)|(1-|z|^2)^{n-k+\frac{\b+2}{t}-\frac{\a+2}{p}+\frac{1}{s}-\frac{1}{q}}<\infty.$$
\end{Theorem}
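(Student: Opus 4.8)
The plan is to reduce the boundedness of $T_g^{n,k}:AT_p^q(\alpha)\to AT_t^s(\beta)$ to a differential-type embedding of the form $D^{(k)}:AT_p^q(\alpha)\to L^s(\mu_g)$ for a suitable measure, and then invoke Lemma \ref{th1} and Lemma \ref{th2}. First I would observe that, by the definition of $T_g^{n,k}$ and the derivative characterization in Lemma \ref{Tfn} applied with the $n$-th derivative, $\|T_g^{n,k}f\|_{AT_t^s(\beta)}\asymp \big\|\,(T_g^{n,k}f)^{(n)}(\cdot)(1-|\cdot|^2)^{n}\,\big\|_{T_t^s(\beta)}$. Since $(T_g^{n,k}f)^{(n)}(z)=f^{(k)}(z)\,g^{(n-k)}(z)$, this means $T_g^{n,k}:AT_p^q(\alpha)\to AT_t^s(\beta)$ is bounded if and only if
\begin{align*}
\int_{\mathbb{T}}\left(\int_{\Gamma(\eta)}|f^{(k)}(z)|^t|g^{(n-k)}(z)|^t(1-|z|^2)^{nt+\beta}\,dA(z)\right)^{\frac{s}{t}}|d\eta|\lesssim \|f\|_{AT_p^q(\alpha)}^s
\end{align*}
for all $f\in AT_p^q(\alpha)$. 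In other words, the operator $D^{(k)}:AT_p^q(\alpha)\to L^s(\mu_g)$ is bounded, where $d\mu_g(z)=|g^{(n-k)}(z)|^t(1-|z|^2)^{nt+\beta}\,dA(z)$, but with the tent-space target norm $\|\cdot\|_{T_t^s}$ rather than an ordinary $L^s$; here I would either quote the tent-space embedding results directly in this form (as in Lemma \ref{th1}, Lemma \ref{th2}, which are phrased for $L^s(\mu)$ but whose proofs via $(r,\kappa)$-lattices adapt verbatim to the $T_t^s$ target) or, more cleanly, note that the relevant statement is exactly the ``$D^{(m)}:AT_p^q(\alpha)\to AT_t^s(\beta)$ bounded'' characterization from \cite{wz2, ag1} with $m=k$ and the measure absorbed into the weight.

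Second, I would translate the lattice conditions coming from Lemma \ref{th2} into the stated conditions on $g^{(n-k)}$. With $Z=\{a_j\}$ an $(r,\kappa)$-lattice, the key quantity is
\begin{align*}
\hat{\mu}_j=\frac{\mu_g(D(a_j,r))}{(1-|a_j|^2)^{\frac{t(\alpha+2)}{p}+tk+1}}\asymp \frac{|g^{(n-k)}(a_j)|^t(1-|a_j|^2)^{nt+\beta+2}}{(1-|a_j|^2)^{\frac{t(\alpha+2)}{p}+tk+1}}=|g^{(n-k)}(a_j)|^t(1-|a_j|^2)^{(n-k)t+\beta+1-\frac{t(\alpha+2)}{p}},
\end{align*}
where I use the sub-mean-value property (as in Lemma \ref{fn} / \cite[Prop.\ 4.13]{z1}) together with $1-|w|\asymp 1-|a_j|$ on $D(a_j,r)$ to replace $\mu_g(D(a_j,r))$ by $|g^{(n-k)}(a_j)|^t(1-|a_j|^2)^{nt+\beta+2}$; the hypothesis $(n-k)pt+p(\beta+2)-t(\alpha+2)>0$ guarantees the exponent is positive so the measure is finite and the replacement is legitimate. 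Then in case (a) ($q>s$, $p>t$), the condition $\hat{\mu}\in T^{q/(q-s)}_{p/(p-t)}(Z)$ becomes $\{|g^{(n-k)}(a_j)|(1-|a_j|^2)^{n-k+\frac{\beta+1}{t}-\frac{\alpha+2}{p}}\}\in T^{qs/(q-s)}_{pt/(p-t)}(Z)$ after raising to the $1/t$ power and matching exponents; comparing with the lattice characterization of the analytic tent space $AT^{qs/(q-s)}_{pt/(p-t)}(\delta)$ — i.e.\ that $g^{(n-k)}\in AT^{u}_{v}(\delta)$ iff $\{|g^{(n-k)}(a_j)|(1-|a_j|^2)^{\frac{\delta+2}{v}+\frac{1}{u}}\}\in T^u_v(Z)$ — forces $\frac{\delta+2}{pt/(p-t)}+\frac{1}{qs/(q-s)}=n-k+\frac{\beta+1}{t}-\frac{\alpha+2}{p}+\frac{1}{t}$, which solves to exactly $\delta=\big(n-k+\frac{\beta}{t}-\frac{\alpha}{p}\big)\frac{pt}{p-t}$ as claimed. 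Cases (b) and (c) are the same bookkeeping with $T^{q/(q-s)}_\infty(Z)$ and $T^\infty_{p/(p-t)}(Z)$ respectively, yielding the stated $AT^{qs/(q-s)}_\infty$ growth/Carleson condition and the $AT^\infty_{pt/(p-t)}(\delta)$ condition. Case (d) comes from the $l^\infty$ condition $\{\hat{\mu}_j(1-|a_j|^2)^{1-s/q}\}\in l^\infty$, which after the substitution and extracting the $t$-th root becomes precisely $\sup_z|g^{(n-k)}(z)|(1-|z|^2)^{n-k+\frac{\beta+2}{t}-\frac{\alpha+2}{p}+\frac1s-\frac1q}<\infty$ (using that a sup over a lattice of a quantity that is comparable on each $D(a_j,r)$ is comparable to the sup over $\mathbb{D}$, plus Lemma \ref{fn} for the reverse direction).

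For the converse direction (sufficiency of each condition), I would run the same equivalences backwards: assuming the stated condition on $g^{(n-k)}$, read off that $\hat{\mu}$ satisfies the corresponding lattice condition of Lemma \ref{th2}, hence $D^{(k)}:AT_p^q(\alpha)\to L^s(\mu_g)$ is bounded in the tent-space sense, hence $T_g^{n,k}$ is bounded by the first reduction. I expect the main obstacle to be purely technical rather than conceptual: namely, justifying that Lemma \ref{th1} and Lemma \ref{th2}, as stated for the target $L^s(\mu)$, apply equally to the target $T_t^s(\beta)$-type quantity appearing here (an inner $L^t$ integral raised to the $s/t$ power and integrated over $\mathbb{T}$), and carefully tracking the exponent algebra so that the weight $\delta$ and all the growth exponents come out exactly as in the statement; one must also check that the hyperbolic-disc estimates (sub-mean-value inequality, $1-|w|\asymp 1-|a_j|$, Lemma \ref{2.9}, Proposition \ref{2.7}) are applied with the correct power of $(1-|z|^2)$ so that no stray factor appears. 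A secondary point requiring care is the role of the standing hypothesis $(n-k)pt+p(\beta+2)-t(\alpha+2)>0$: it should be used to ensure both that $\mu_g$ has the right local behaviour for the embedding machinery and that the extremal test functions (of the form $f_a(z)=(1-|a|^2)^{c}/(1-\bar a z)^{c+(\alpha+2)/p}$, whose $AT_p^q(\alpha)$-norms are controlled by Proposition \ref{2.23}) actually lie in $AT_p^q(\alpha)$ and produce the claimed lower bounds when testing necessity.
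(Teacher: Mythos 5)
Your central reduction has a genuine gap. You propose to rewrite the boundedness of $T_g^{n,k}$ as the boundedness of $D^{(k)}:AT_p^q(\alpha)\to L^s(\mu_g)$ with $d\mu_g=|g^{(n-k)}|^t(1-|z|^2)^{nt+\beta}dA$ and then quote Lemmas \ref{th1} and \ref{th2}. But those lemmas characterize embeddings into a genuine $L^s(\mu)$ target; here, unless $s=t$, the quantity you must control is $\int_{\mathbb{T}}\bigl(\int_{\Gamma(\eta)}|f^{(k)}|^t|g^{(n-k)}|^t(1-|z|^2)^{nt+\beta}dA\bigr)^{s/t}|d\eta|$, i.e.\ a weighted $T_t^s$-norm with the non-radial weight $|g^{(n-k)}|^t$, and no cited result covers that (your own bookkeeping betrays this: you write $\hat\mu_j$ with denominator $(1-|a_j|^2)^{\frac{t(\alpha+2)}{p}+tk+1}$, whereas Lemma \ref{th2} would put $s$ there). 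Asserting that the proofs ``adapt verbatim'' or that the measure can be ``absorbed into the weight'' assumes precisely the tent-to-tent multiplier characterization that is the content of the theorem; the paper has to earn it: sufficiency is proved case by case with H\"older's inequality (together with Lemmas \ref{Tfn}, \ref{le1}, \ref{le2}, and for $q<s$ a detour through Bergman norms $A^r_\gamma$ with $q<r<s$, which is not a lattice argument at all), and necessity is proved by testing on randomized lattice functions (Proposition \ref{2.23}, Kahane and Khinchine inequalities) combined with duality and factorization of sequence tent spaces (Propositions \ref{2.16}--\ref{2.21}), and only afterwards does the paper invoke Lemmas \ref{th2} and \ref{th1} --- with the auxiliary measure $|g^{(n-k)}|^{1/b}(1-|z|^2)^{m+\beta+t+1+\frac{n-k-1}{b}}dA$ --- as a bridge from the discrete condition at lattice points to the stated continuous conditions. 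Your proposal contains neither the randomization/duality step (a single test function $F_a$ only yields the pointwise condition (d), not the integrated conditions (a)--(c)) nor any substitute for the H\"older estimates on the sufficiency side.

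Two further points are not mere bookkeeping. First, the subharmonic mean-value inequality gives only $|g^{(n-k)}(a_j)|^t(1-|a_j|^2)^{nt+\beta+2}\lesssim\mu_g(D(a_j,r))$, not the reverse, so your claimed equivalence $\hat\mu_j\asymp|g^{(n-k)}(a_j)|^t(1-|a_j|^2)^{(n-k)t+\beta+1-\frac{t(\alpha+2)}{p}}$ is one-sided (the hypothesis $(n-k)pt+p(\beta+2)-t(\alpha+2)>0$ is irrelevant to this); likewise, in case (d) the supremum of $|g^{(n-k)}(a_j)|(1-|a_j|^2)^{c}$ over a fixed lattice does \emph{not} control the supremum over $\D$ for a general analytic $g$ (a nonzero analytic function may vanish on the whole lattice), which is exactly why the paper passes from the discrete data back to continuous conditions through the Lemma \ref{th2}$\Rightarrow$Lemma \ref{th1} bridge (or, for (d), via the explicit test functions and Lemma \ref{fn}, as in the compactness proof). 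Second, your exponent matching in case (a) is inconsistent: with your discretization rule (the extra $\frac{1}{u}$ in the exponent) the equation you display yields $\delta=\bigl(n-k+\frac{\beta}{t}-\frac{\alpha}{p}\bigr)\frac{pt}{p-t}-\bigl(\frac1s-\frac1q\bigr)\frac{pt}{p-t}$, not the stated $\delta$; the correct discrete exponent, as in the paper, is $n-k+\frac{\beta+2}{t}-\frac{\alpha+2}{p}$ with no $\frac1s-\frac1q$ shift in cases (a)--(c). So while your final conditions agree with the theorem, the argument as proposed rests on an unproved generalization of the embedding lemmas and on two incorrect comparability claims, and would need essentially the paper's machinery to be repaired.
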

\begin{proof}
{\bf Sufficiency.}
(a) If $0<s<q<\infty$, $0<t<p<\infty$, 
$g^{(n-k)}\in AT_{\frac{pt}{p-t}}^{\frac{qs}{q-s}}(\delta)$, where $\delta=\left( n-k+\frac{\b}{t}-\frac{\a}{p} \right)\frac{pt}{p-t}$.
Using Lemma \ref{Tfn} and H\"older's inequalities with $\frac{p}{t}>1$ and $\frac{q}{s}>1$ respectively, we obtain

\begin{equation}\label{aa}
\begin{aligned}
&\|T_g^{n,k}f\|_{AT_t^s(\b)}^s\\
\asymp&\int_{\mathbb{T}}\left( \int_{\Gamma(\eta)}|(T_g^{n,k}f)^{(n)}(z)|^t(1-|z|^2)^{nt+\b}dA(z)  \right)^{\frac{s}{t}}|d\eta|\\
=&\int_{\mathbb{T}}\left( \int_{\Gamma(\eta)}|f^{(k)}(z)|^t|g^{(n-k)}(z)|^t(1-|z|^2)^{nt+\b}dA(z)  \right)^{\frac{s}{t}}|d\eta|\\
\lesssim& \int_{\mathbb{T}} \left(   \int_{\Gamma(\eta)}|f^{(k)}(z)|^p(1-|z|^2)^{kp+\a}dA(z) \right)^{\frac{s}{p}} \cdot\left(\int_{\Gamma(\eta)}|g^{(n-k)}(z)|^{\frac{pt}{p-t}}(1-|z|^2)^{\delta} dA(z) \right)^{\frac{s(p-t)}{pt}}|d\eta| \\
\lesssim&\left(  \int_{\mathbb{T}} \left(   \int_{\Gamma(\eta)}|f^{(k)}(z)|^p(1-|z|^2)^{kp+\a}dA(z) \right)^{\frac{q}{p}} |d\eta|\right)^{\frac{s}{q}}\\
&\cdot\left(\int_{\mathbb{T}} \left(\int_{\Gamma(\eta)}|g^{(n-k)}(z)|^{\frac{pt}{p-t}}(1-|z|^2)^{\delta} dA(z) \right)^{\frac{qs(p-t)}{pt(q-s)}}|d\eta|  \right)^{\frac{q-s}{q}}\\
\lesssim& \|f\|_{AT_p^q(\a)}^s\cdot\|g^{(n-k)}\|^s_{AT_{\frac{pt}{p-t}}^{\frac{qs}{q-s}}(\delta)}.
\end{aligned}
\end{equation}
	
(b) If $0<s<q<\infty$, $0<p \leq t<\infty$, $
g^{(n-k)}(\cdot)(1-|\cdot|^2)^{n-k+\frac{\b+2}{t}-\frac{\a+2}{p}}\in AT_\infty^{\frac{qs}{q-s}}.$
Using Lemma \ref{Tfn}, H\"older's inequality and Lemma \ref{le1}, we get 
\begin{equation*} 
\begin{aligned}
 &\|T_g^{n,k}f\|_{AT_t^s(\b)}^s 
\asymp\int_{\mathbb{T}}\left( \int_{\Gamma(\eta)}|(T_g^{n,k}f)^{(n)}(z)|^t(1-|z|^2)^{nt+\b}dA(z)  \right)^{\frac{s}{t}}|d\eta|\\
=&\int_{\mathbb{T}}\left( \int_{\Gamma(\eta)}|f^{(k)}(z)|^t|g^{(n-k)}(z)|^t(1-|z|^2)^{nt+\b}dA(z)  \right)^{\frac{s}{t}}|d\eta|\\
\lesssim& \int_{\mathbb{T}} \left(   \int_{\Gamma(\eta)}|f^{(k)}(z)|^t(1-|z|^2)^{kt+\frac{t(\a+2)}{p}-2}dA(z) \right)^{\frac{s}{t}}\\
&\cdot\left(\sup_{z\in \Gamma(\eta)}|g^{(n-k)}(z)|^{t}(1-|z|^2)^{\b+2+nt-kt-\frac{t(\a+2)}{p}}  \right)^{\frac{s}{t}}|d\eta| \\
\lesssim& \left(  \int_{\mathbb{T}} \left(   \int_{\Gamma(\eta)}|f^{(k)}(z)|^t(1-|z|^2)^{kt+\frac{t(\a+2)}{p}-2}dA(z) \right)^{\frac{q}{t}} |d\eta|\right)^{\frac{s}{q}}\\
&\cdot\left(\int_{\mathbb{T}} \left(\sup_{z\in \Gamma(\eta)}|g^{(n-k)}(z)|(1-|z|^2)^{\frac{\b+2}{p}+n-k-\frac{(\a+2)}{p}}  \right)^{\frac{qs}{q-s}}|d\eta|  \right)^{\frac{q-s}{q}} 
\end{aligned}
\end{equation*}
\begin{equation}\label{bb}
\begin{aligned}
\lesssim& \|g^{(n-k)}(\cdot)(1-|\cdot|^2)^{n-k+\frac{\b+2}{t}-\frac{\a+2}{p}}\|^s_{AT_\infty^{\frac{qs}{q-s}}} \left(  \int_{\mathbb{T}} \left(   \int_{\Gamma(\eta)}|f^{(k)}(z)|^t(1-|z|^2)^{kt+\frac{t(\a+2)}{p}-2}dA(z) \right)^{\frac{q}{t}} |d\eta|\right)^{\frac{s}{q}}\\
\lesssim& \|g^{(n-k)}(\cdot)(1-|\cdot|^2)^{n-k+\frac{\b+2}{t}-\frac{\a+2}{p}}\|^s_{AT_\infty^{\frac{qs}{q-s}}}\|f\|^s_{AT_p^q(\a)}.
\end{aligned}
\end{equation}
	
(c) If $0<q=s<\infty$, $0<t<p<\infty$, 
$g^{(n-k)}\in AT_{\frac{pt}{p-t}}^{\infty}(\delta)$, where $\delta=\left(n-k+\frac{\b}{t}-\frac{\a}{p} \right)\frac{pt}{p-t}$.
Using Lemma \ref{Tfn} and  H\"older's inequality, we have
\begin{equation} \label{cc}
\begin{aligned}
\|T_g^{n,k}f\|_{AT_t^s(\b)}^q\asymp&\int_{\mathbb{T}}\left( \int_{\Gamma(\eta)}|(T_g^{n,k}f)^{(n)}(z)|^t(1-|z|^2)^{nt+\b}dA(z)  \right)^{\frac{q}{t}}|d\eta|\\
=&\int_{\mathbb{T}}\left( \int_{\Gamma(\eta)}|f^{(k)}(z)|^t|g^{(n-k)}(z)|^t(1-|z|^2)^{nt+\b}dA(z)  \right)^{\frac{s}{t}}|d\eta|\\
\lesssim& \int_{\mathbb{T}} \left(   \int_{\Gamma(\eta)}|f^{(k)}(z)|^p(1-|z|^2)^{kp+\a}dA(z) \right)^{\frac{q}{p}}\\
&\cdot\left(\int_{\Gamma(\eta)}|g^{(n-k)}(z)|^{\frac{pt}{p-t}}(1-|z|^2)^\delta dA(z) \right)^{\frac{q(p-t)}{pt}}|d\eta| \\
\lesssim&\|f\|^q_{AT_p^q(\a)}\sup_{\eta\in\mathbb{T}}\left( \int_{\Gamma(\eta)}|g^{n-k}(z)|^{\frac{pt}{p-t}}(1-|z|^2)^\delta dA(z) \right)^{\frac{q(p-t)}{pt}}
\\
\asymp&\|f\|^q_{AT_p^q(\a)} \sup_{\eta\in\mathbb{T}}\left(
\int_{\D}\chi_{\Gamma(\eta)}(z)|g^{n-k}(z)|^{\frac{pt}{p-t}}\frac{(1-|z|^2)^{\delta+1}}{|1-\bar{\eta}z|}dA(z)
\right)^{\frac{q(p-t)}{pt}}\\
\lesssim&\|f\|^q_{AT_p^q(\a)} \left(
\|g^{(n-k)}\|^{\frac{pt}{p-t}}_{AT_{\frac{pt}{p-t}}^{\infty}(\delta)}\cdot\sup_{0<\rho<1}\left\| \frac{\chi_\Gamma(\eta)(\cdot)}{|1-\bar{\eta}\cdot(\cdot)|} \right\|_{L^1(\rho\mathbb{T})}
\right)^{\frac{q(p-t)}{pt}}\\
\lesssim&\|f\|^q_{AT_p^q(\a)}
\|g^{(n-k)}\|^{q}_{AT_{\frac{pt}{p-t}}^{\infty}(\delta)}.	
\end{aligned}
\end{equation}
	
(d) If $0<q<s<\infty$, $0<p,t <\infty$ or $0<q=s<\infty$, $0<p\leq t<\infty$,
$\sup_{z\in \DD}|g^{(n-k)}(z)|(1-|z|^2)^{n-k+\frac{\b+2}{t}-\frac{\a+2}{p}+\frac{1}{s}-\frac{1}{q}}<\infty$.
Assume first that $0<q<s<\infty$, $0<p,t <\infty$. From \cite[Lemma 3.1]{wz1}, we know that
$$
\|f\|_{AT_t^s(\b)}\lesssim\|f'\|_{A^q_{\frac{q(2+\b+t)}{t}+\frac{q}{s}-2}}\asymp\|f^{(n)}\|_{A^q_{\frac{q(\b+2)}{t}+nq+\frac{q}{s}-2}}.
$$
Thus, when $q<s$, there exists $r$ such that $q<r<s$. Therefore, using Lemma \ref{le2}, we obtain
\begin{equation}\label{dd}
\begin{aligned}
\|T_g^{n,k}f\|_{AT_t^s(\b)}^s&\lesssim\|(T_g^{n,k}f)^{(n)}\|^s_{A^r_{\frac{r(\b+2)}{t}+nr+\frac{r}{s}-2}}\\
&\asymp\left( \int_\D|f^{(k)}(z)|^r|g^{(n-k)}(z)|^r(1-|z|^2)^{\frac{r(\b+2)}{t}+nr+\frac{r}{s}-2} dA(z) \right)^{\frac{s}{r}}\\
&\lesssim \left( \int_\D|g^{(n-k)}(z)|^r(1-|z|^2)^{\delta r}|f^{(k)}(z)|^r(1-|z|^2)^{\frac{r(\a+2)}{p}+kr+\frac{r}{q}-2} dA(z) \right)^{\frac{s}{r}}\\
&\lesssim\left( \sup_{z\in\D}|g^{(n-k)}(z)|(1-|z|^2)^\delta \right)^s \left( \int_\D |f^{(k)}(z)|^r(1-|z|^2)^{\frac{r(\a+2)}{p}+kr+\frac{r}{q}-2}dA(z) \right)^{\frac{s}{r}}\\
&\lesssim\|f\|^s_{AT_p^q(\a)}.
\end{aligned}
\end{equation}
Next, suppose that $0<q=s<\infty$, $0<p\leq t<\infty$. Using Lemmas \ref{Tfn} and   \ref{le1}, we have
\begin{equation}\label{ee}
\begin{aligned}
&\|T_g^{n,k}f\|_{AT_t^s(\b)}^q\asymp\int_{\mathbb{T}}\left( \int_{\Gamma(\eta)}|(T_g^{n,k}f)^{(n)}(z)|^t(1-|z|^2)^{nt+\b}dA(z)  \right)^{\frac{q}{t}}|d\eta|\\
=&\int_{\mathbb{T}}\left( \int_{\Gamma(\eta)}|f^{(k)}(z)|^t|g^{(n-k)}(z)|^t(1-|z|^2)^{nt+\b}dA(z)  \right)^{\frac{q}{t}}|d\eta|\\
\lesssim& \left(\sup_{z\in \Gamma(\eta)}|g^{(n-k)}(z)|^{t}(1-|z|^2)^{\delta}  \right)^{q}  \int_{\mathbb{T}} \left(   \int_{\Gamma(\eta)}|f^{(k)}(z)|^t(1-|z|^2)^{kt+\frac{t(\a+2)}{p}-2}dA(z) \right)^{\frac{s}{t}}|d\eta|\\
\lesssim& \|f\|^q_{AT_p^q(\a)},
\end{aligned}
\end{equation}
which implies the boundedness of $T_g^{n,k}$.
	
{\bf Necessity.}
Let  $Z=\{a_j\}$ be an $(r, \kappa)$-lattice and $\{x_j\}\in T_p^q(Z)$. Let $r_j:[0,1] \rightarrow\{-1,1\}$ be the Radermacher functions. For $L>\max \{1, \frac{p}{q}, \frac{1}{q}, \frac{1}{p}\}$, set
$$
F_v(z)=\sum_{j=1}^\infty x_j r_j(v) \frac{(1-|a_j|^2)^{L}}{(1-\overline{a_j} z)^{L+\frac{\a+2}{p}}}, \quad z \in \mathbb{D} .
$$
Using Proposition $\ref{2.23}$, we get $
\|F_v\|_{AT_p^q(\a)}\lesssim\|\{x_j\}\|_{T_p^q(Z)}.$
By the assumption, we get
\begin{align*} &\int_\mathbb{T}\left(\int_{\Gamma(\eta)}|(T_g^{n,k}F_v)^{(n)}(z)|^t(1-|z|^2)^{nt+\b} dA(z)\right)^{\frac{s}{t}}|d \eta|\\
\lesssim& \|T_g^{n,k}\|^s \|F_v\|_{AT_p^q(\alpha)}^s
\lesssim \|T_g^{n,k}\|^s \|\{x_j\}\|_{T_p^q(Z)}^s.
\end{align*}
Integrating both sides with respect to $v$ from $0$ to $1$,
using Fubini's theorem and Kahane's inequality (see \cite[Theorem 2.1]{ka}), we have
\begin{align*}
&\int_\mathbb{T}\left(\int_{0}^{1}\int_{\Gamma(\eta)}
|F_v^{(k)}(z)|^t|g^{(n-k)}(z)|^t
(1-|z|^2)^{nt+\b} dA(z) dv \right)^{\frac{s}{t}} |d \eta|\\
\lesssim&\int_\mathbb{T}\int_{0}^{1}\left(\int_{\Gamma(\eta)}|F_v^{(k)}(z)|^t|g^{(n-k)}(z)|^t(1-|z|^2)^{nt+\b} dA(z)\right)^{\frac{s}{t}} dv |d \eta|
\lesssim \|T_g^{n,k}\|^s \|\{x_j\}\|_{T_p^q(Z)}^s.
\end{align*}
Applying Khinchine's inequality, we obtain
\begin{align*}
\int_\mathbb{T}\left(\int_{\Gamma(\eta)}\left(\sum_j|x_j|^2 |a_j|^{2k}\frac{(1-|a_j|^2)^{2 L}}{|1-\overline{a_j} z|^{2 L+2k+\frac{2(\a+2)}{p}}}\right)^{\frac{t}{2}} |g^{n-k}(z)|^t
(1-|z|^2)^{nt+\b} dA(z)\right)^{\frac{s}{t}}|d \eta| 
\lesssim \|T_g^{n,k}\|^s\|\{x_j\}\|_{T_p^q(Z)}^s.
\end{align*}
Hence, 
\begin{align*}
&\int_\mathbb{T}\left(\int_{\Gamma(\eta)}\left(\sum_j|x_j|^2 \frac{(1-|a_j|^2)^{2 L}}{|1-\overline{a_j} z|^{2 L+2k+\frac{2(\a+2)}{p}}}\right)^{\frac{t}{2}} |g^{n-k}(z)|^t
(1-|z|^2)^{nt+\b} dA(z)\right)^{\frac{s}{t}}|d \eta| \\
\lesssim& \|T_g^{n,k}\|^s\|\{x_j\}\|_{T_p^q(Z)}^s.
\end{align*}
Let $w_j=|g^{(n-k)}(a_j)|(1-|a_j|^2)^{n-k+\frac{\b+2}{t}-\frac{\a+2}{p}}$. 
From Lemma \ref{2.9} we get  $\cup_{z\in\Gamma(\eta)}D(z,4r)\subset\widetilde{\Gamma}(\eta)$, and using Proposition \ref{2.7}, Fubini's theorem and H\"older inequality, we have
\begin{align*}
&\int_\mathbb{T}\left(\sum_{a_j \in \Gamma(\eta)}|x_j|^t |w_j|^t\right)^{\frac{s}{t}}|d \eta| \\ 
=&\int_\mathbb{T}\left(\sum_{a_j \in \Gamma(\eta)}|x_j|^t |g^{(n-k)}(a_j)|^t(1-|a_j|^2)^{nt-kt+\b+2-\frac{t(\a+2)}{p}}\right)^{\frac{s}{t}}|d \eta| \\ 
\lesssim&\int_\mathbb{T}\left(\sum_{a_j \in \Gamma(\eta)}|x_j|^t \int_{D(a_j, 4r)}|g^{(n-k)}(z)|^t \frac{(1-|z|^2)^{nt+\b}(1-|a_j|^2)^{tL}}
{|1-\ol{a_j}z|^{tL+tk+\frac{t(\a+2)}{p}}}dA(z)\right)^{\frac{s}{t}}|d \eta| \\
\lesssim&\int_{\mathbb{T}}\left(\int_{\widetilde{\Gamma}(\eta)}\sum_j|x_j|^t 
\frac{(1-|a_j|^2)^{tL}}{|1-\ol{a_j}z|^{tL+tk+\frac{t(\a+2)}{p}}}\chi_{D(a_j,4r)}(z)|g^{(n-k)}(z)|^t(1-|z|^2)^{nt+\b}dA(z)\right)^{\frac{s}{t}}|d\eta|\\
\lesssim&\int_{\mathbb{T}}\left(\int_{\widetilde{\Gamma}(\eta)}\left(\sum_j|x_j|^2 
\frac{(1-|a_j|^2)^{2L}}{|1-\ol{a_j}z|^{2L+2k+\frac{2(\a+2)}{p}}}\right)^\frac{t}{2}|g^{(n-k)}(z)|^t(1-|z|^2)^{nt+\b}dA(z)\right)^{\frac{s}{t}}|d \eta|\\
\lesssim&\|T_g^{n,k}\|^s \|\{x_j\}\|_{T_p^q(Z) }^s.
\end{align*}
Let $b>\max \left\{\frac{1}{s}, \frac{1}{t}\right\}$. For any $\{\iota_j\} \in T_{\frac{bt}{bt-1}}^{\frac{bs}{bs-1}}(Z)$,
using Fubini's theorem and H\"older's inequality twice, we obtain
\begin{equation}\label{4.5}
\begin{aligned}
&\sum_j |\iota_j x_j^{\frac{1}{b}} w_j^{\frac{1}{b}}|(1-|a_j|^2) 
\asymp \int_{\mathbb{T}}\left(\sum_{a_j \in \Gamma(\eta)} |\iota_j| |x_j|^{\frac{1}{b}} |w_j|^{\frac{1}{b}}\right)|d \eta| \\
\lesssim& \int_{\mathbb{T}}\left(\sum_{a_j \in \Gamma(\eta)}|x_j|^t  |w_j|^t \right)^{\frac{1}{tb}}\left(\sum_{a_j \in \Gamma(\eta)} |\iota_j|^{\frac{b t}{b t-1}}\right)^{\frac{b t-1}{b t}}|d \eta| \\
\leq&\left(\int_{\mathbb{T}}\left(\sum_{a_j \in \Gamma(\eta)}|x_j|^t |w_j|^t\right)^{\frac{sb}{tb}}|d \eta|\right)^{\frac{1}{bs}}
\left( \int_{\mathbb{T}} \left(  \sum_{a_j \in \Gamma(\eta)} |\iota_j|^{\frac{bt}{bt-1}} \right)^\frac{s(bt-1)}{t(bs-1)}   |d\eta| \right)^{\frac{bs-1}{bs}}\\
\lesssim&\|T_g^{n,k}\|^{\frac{1}{b}} \|\{x_j\}\|_{T_p^q(Z)}^{\frac{1}{b}}\|\{\iota_j\}\|_{T_{\frac{bt}{bt-1}}^{\frac{bs}{bs-1}}(Z)}.
\end{aligned}
\end{equation}
Applying Proposition \ref{2.16} we get
$$
\{\iota_j x_j^\frac{1}{b}\} \in T_{\frac{p b t}{pbt-p+t}}^{\frac{q b s}{qbs-q+s}}(Z)=T_{\frac{b t}{b t-1}}^{\frac{b s}{b s-1}}(Z) \cdot T_{b p}^{b q}(Z).
$$
Now, we divide the rest proof into four cases.

(a) For $0<s<q<\infty$, $0<t<p<\infty$, using Proposition \ref{2.18}, we have $$\left(T_{\frac{p b t}{pbt-p+t}}^{\frac{q b s}{qbs-q+s}}(Z)\right)^*=T_{\frac{pb t}{p-t}}^{\frac{ qb s}{q-s}}(Z),$$ 
which together with $(\ref{4.5})$ imply
\begin{align*}
&\left\|\left\{ |w_j|^{\frac{1}{b}}\right\}\right\|_{T_{\frac{pb t}{p-t}}^{\frac{ qb s}{q-s}}(Z)}=\left\|\left\{|w_j|\right\}\right\|_{T_{\frac{p t}{p-t}}^{\frac{q s}{q-s}}(Z)}^\frac{1}{b}\\ =&\left(\int_{\mathbb{T}}\left(\sum_{a_j \in \Gamma(\eta)}\left( |g^{(n-k)}(a_j)|(1-|a_j|^2)^{n-k+\frac{\b+2}{t}-\frac{\a+2}{p}} \right)^{\frac{p t}{p-t}}\right)^{\frac{(p-t) q s}{(q-s) p t}}|d \eta|\right)^{\frac{q-s}{qbs}}<\infty .
\end{align*} 
Thus, we have
$|g^{(n-k)}(a_j)|(1-|a_j|^2)^{n-k+\frac{\b+2}{t}-\frac{\a+2}{p}}\in T_{\frac{pt}{p-t}}^{\frac{qs}{q-s}}(Z)$.
By Lemma \ref{th2}, we obtain for $b>\max\{\frac{1}{t},\frac{1}{s}\}$ and $m\in\NN\cup\{0\}$, the operator
$D^{(m)}:AT_{\frac{p b t}{pbt-p+t}}^{\frac{q b s}{qbs-q+s}}(\gamma)\to L^1(\mu_g)$
is bounded.
Here, $\gamma=\left[(\b+t)\left(1-\frac{1}{tb}\right)+\frac{\a}{pb}\right]\cdot\frac{pbt}{pbt-p+t}$ and $d\mu_g(z)=|g^{(n-k)}(z)|^\frac{1}{b}(1-|z|^2)^{m+\b+t+1+\frac{n-k-1}{b}}dA(z)$.
According to Lemma \ref{th1}, we know that the operator $D^{(m)}:AT_{\frac{p b t}{pbt-p+t}}^{\frac{q b s}{qbs-q+s}}(\gamma)\to L^1(\mu_g)$ is bounded if and only if 
$g^{(n-k)}\in AT_{\frac{pt}{p-t}}^{\frac{qs}{q-s}}(\delta)$, where $\delta=\left( n-k+\frac{\b}{t}-\frac{\a}{p} \right)\frac{pt}{p-t}$.

(b) Let $0<s<q<\infty$.
By Proposition \ref{2.18}  for  $0<p=t<\infty$ and Proposition \ref{2.19}  for  $0<p<t<\infty$, 
we obtain 
$$\left(T_{\frac{p b t}{pbt-p+t}}^{\frac{q b s}{qbs-q+s}}(Z)\right)^*=T_{\frac{pb t}{p-t}}^{\frac{ qb s}{q-s}}(Z)\quad  \mbox{ and} \quad \left(T_{\frac{p b t}{pbt-p+t}}^{\frac{q b s}{qbs-q+s}}(Z)\right)^*=T_{\infty}^{\frac{ qb s}{q-s}}(Z),$$ 
respectively. By $(\ref{4.5})$ we have
\begin{align*}
&\left\|\left\{|w_j|^{\frac{1}{b}}\right\}\right\|_{T_{\infty}^{\frac{ qb s}{q-s}}(Z)}=\left\|\left\{|w_j|\right\}\right\|_{T_{\infty}^{\frac{q s}{q-s}}(Z)}^\frac{1}{b}\\
=&\left(\int_{\mathbb{T}}\left(\sup _{a_j \in \Gamma(\eta)} |g^{(n-k)}(a_j)|(1-|a_j|^2)^{n-k+\frac{\b+2}{t}-\frac{\a+2}{p}}\right)^{\frac{qs}{q-s}}|d \eta|\right)^{\frac{q-s}{qbs}}<\infty .
\end{align*}
Thus, we have $|g^{(n-k)}(a_j)|(1-|a_j|^2)^{n-k+\frac{\b+2}{t}-\frac{\a+2}{p}}\in T_\infty^{\frac{qs}{q-s}}(Z)$.
By Lemma \ref{th2}, we obtain for $b>\max\{\frac{1}{t},\frac{1}{s}\}$ and $m\in\NN\cup\{0\}$, the operator
$D^{(m)}:AT_{\frac{p b t}{pbt-p+t}}^{\frac{q b s}{qbs-q+s}}(\gamma)\to L^1(\mu_g)$
is bounded.  
Here, $\gamma=\left[(\b+t)\left(1-\frac{1}{tb}\right)+\frac{\a}{pb}\right]\cdot\frac{pbt}{pbt-p+t}$ and $d\mu_g(z)=|g^{(n-k)}(z)|^\frac{1}{b}(1-|z|^2)^{m+\b+t+1+\frac{n-k-1}{b}}dA(z)$.
Using Lemma \ref{th1}, we have that $D^{(m)}:AT_{\frac{p b t}{pbt-p+t}}^{\frac{q b s}{qbs-q+s}}(\gamma)\to L^1(\mu_g)$ is bounded if and only if 
%
$g^{(n-k)}(\cdot)(1-|\cdot|^2)^{n-k+\frac{\b+2}{t}-\frac{\a+2}{p}}\in AT_\infty^{\frac{qs}{q-s}}$.

(c) For $0<q=s<\infty$, $0<t<p<\infty$, applying  Proposition $\ref{2.20}$, we get $$\left(T_{\frac{p b t}{pbt-p+t}}^{1}(Z)\right)^*=T_{\frac{pb t}{p-t}}^{\infty}(Z).$$ 
Using $(\ref{4.5})$ we have
{\small\begin{align*}
&\left\|\left\{|w_j|^{\frac{1}{b}}\right\}\right\|_{T_{\frac{ pb t}{p-t}}^{\infty}(Z)}=\left\|\left\{|w_j|\right\}\right\|_{T_{\frac{p t}{p-t}}^{\infty}(Z)}^\frac{1}{b}\\
=&\operatorname{esssup} _{\eta \in \mathbb{T}}\left(\sup _{u\in\Gamma(\eta)}\frac{1}{1-|u|^2} \sum_{a_j \in S(u)}\left( |g^{(n-k)}(a_j)|(1-|a_j|^2)^{n-k+\frac{\b+2}{t}-\frac{\a+2}{p}} \right)^{\frac{pt}{p-t}}(1-|a_j|^2)\right)^{\frac{p-t}{pbt}}<\infty.
\end{align*}}
Thus, we have $|g^{(n-k)}(a_j)|(1-|a_j|^2)^{n-k+\frac{\b+2}{t}-\frac{\a+2}{p}}\in T^\infty_{\frac{pt}{p-t}}(Z)$.
By Lemma \ref{th2}, we obtain for $b>\max\{\frac{1}{t},\frac{1}{s}\}$ and $m\in\NN\cup\{0\}$, the operator
$D^{(m)}:AT_{\frac{p b t}{pbt-p+t}}^{\frac{q b s}{qbs-q+s}}(\gamma)\to L^1(\mu_g)$
is bounded. 
Here, $\gamma=\left[(\b+t)\left(1-\frac{1}{tb}\right)+\frac{\a}{pb}\right]\cdot\frac{pbt}{pbt-p+t}$ and $d\mu_g(z)=|g^{(n-k)}(z)|^\frac{1}{b}(1-|z|^2)^{m+\b+t+1+\frac{n-k-1}{b}}dA(z)$.
From Lemma \ref{th1}, we know that $D^{(m)}:AT_{\frac{p b t}{pbt-p+t}}^{\frac{q b s}{qbs-q+s}}(\gamma)\to L^1(\mu_g)$ is bounded if and only if $g^{(n-k)}\in AT_{\frac{pt}{p-t}}^{\infty}(\delta)$, where $\delta=\left(n-k+\frac{\b}{t}-\frac{\a}{p} \right)\frac{pt}{p-t}$.

(d) For $0<q<s<\infty$, $0<p,t<\infty$ or $0<q=s<\infty$, $0<p\le t<\infty$, using Proposition \ref{2.21} and $(\ref{4.5})$, we have
\begin{align*}
\sup _j |w_j|^{\frac{1}{b}}(1-|a_j|^2)^{\frac{q-s}{q b s}}=\left(\sup _j |g^{(n-k)}(a_j)|(1-|a_j|^2)^{n-k+\frac{\b+2}{t}-\frac{\a+2}{p}+\frac{1}{s}-\frac{1}{q}}\right)^\frac{1}{b}<\infty.
\end{align*}
Thus, we have $ |g^{(n-k)}(a_j)|(1-|a_j|^2)^{n-k+\frac{\b+2}{t}-\frac{\a+2}{p}+\frac{1}{s}-\frac{1}{q}}\in l^\infty$.  
By Lemma \ref{th2}, we obtain for $b>\max\{\frac{1}{t},\frac{1}{s}\}$ and $m\in\NN\cup\{0\}$, the operator
$D^{(m)}:AT_{\frac{p b t}{pbt-p+t}}^{\frac{q b s}{qbs-q+s}}(\gamma)\to L^1(\mu_g)$
is bounded.
Here, $\gamma=\left[(\b+t)\left(1-\frac{1}{tb}\right)+\frac{\a}{pb}\right]\cdot\frac{pbt}{pbt-p+t}$ and $d\mu_g(z)=|g^{(n-k)}(z)|^\frac{1}{b}(1-|z|^2)^{m+\b+t+1+\frac{n-k-1}{b}}dA(z)$.
By Lemma \ref{th1},
the operator $D^{(m)}:AT_{\frac{p b t}{pbt-p+t}}^{\frac{q b s}{qbs-q+s}}(\gamma)\to L^1(\mu_g)$ is bounded if and only if
$$\sup_{z\in \DD}|g^{(n-k)}(z)|(1-|z|^2)^{n-k+\frac{\b+2}{t}-\frac{\a+2}{p}+\frac{1}{s}-\frac{1}{q}}<\infty.
$$
%
\end{proof}

Next, we consider the boundedness of the operator $T_g^{n,k}:AT_p^q(\a)\to AT_t^s(\b)$ when  $t=\infty$, i.e., $T_g^{n,k}:AT_p^q(\a)\to H^s$, since $AT_\infty^s=H^s$.

\begin{Theorem}\label{hb}
Let $0<p,q,s<\infty$, $\a>-2$, $g\in H(\D)$, $n\in\NN$  and $k\in\NN\cup\{0\}$ such that $0\le k<n$ and $(n-k)p-(\a+2)>0$. 
Then $T_g^{n,k}:AT_p^q(\a)\to H^s$ is bounded if and only if one of the following conditions holds:
	
(a) If $0<s<q<\infty$, $2<p<\infty$, 
	$g^{(n-k)}\in AT_{\frac{2p}{p-2}}^{\frac{qs}{q-s}}(\delta),$ where $\delta=\left(n-k -1-\frac{\a}{p} \right)\frac{2p}{p-2}$.
	
(b) If $0<s<q<\infty$, $0<p \leq 2$, 
	$	g^{(n-k)}(\cdot)(1-|\cdot|^2)^{n-k-\frac{\a+2}{p}}\in AT_\infty^{\frac{qs}{q-s}}.$
	
(c) If $0<q=s<\infty$, $2<p<\infty$, $g^{(n-k)}\in AT_{\frac{2p}{p-2}}^{\infty}(\delta),$ where $\delta=\left(n-k-1-\frac{\a}{p} \right)\frac{2p}{p-2}$.
	
(d) If $0<q<s<\infty$, $0<p<\infty$ or $0<q=s<\infty$, $0<p\leq 2$, 
$$\sup_{z\in \DD}|g^{(n-k)}(z)|(1-|z|^2)^{n-k-\frac{\a+2}{p}+\frac{1}{s}-\frac{1}{q}}<\infty.$$
\end{Theorem}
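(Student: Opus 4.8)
The plan is to deduce Theorem \ref{hb} from Theorem \ref{th3} by taking the formal limit $t\to\infty$, while supplying the two places where the $t=\infty$ case genuinely differs. Observe first that $AT^s_\infty = H^s$ by the non-tangential maximal function characterization mentioned in the introduction, so the target space is what we want. Recall that $H^s = AT^s_\infty = AT^s_2(-1)$ in the sense of equivalent (quasi-)norms (this is the Fefferman--Stein / Calderón area-integral description of $H^s$: $\|f\|_{H^s}\asymp\|f'(\cdot)(1-|\cdot|)\|_{T^s_2(-1)}$, which is exactly Lemma \ref{Tfn}--type reasoning applied with $p=2,\ \a=-1$). Thus, \emph{formally}, $T^s_\infty$ behaves like $T^s_t(\b)$ with $t=2$ and $\b=-1$. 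Substituting $t=2,\ \b=-1$ into the exponents of Theorem \ref{th3} gives $\delta = (n-k+\frac{-1}{2}-\frac{\a}{p})\frac{2p}{p-2} = (n-k-\frac12-\frac{\a}{p})\frac{2p}{p-2}$; but the paper's statement has $\delta=(n-k-1-\frac\a p)\frac{2p}{p-2}$, and the maximal-function exponents become $n-k+\frac{\b+2}{t}-\frac{\a+2}{p}=n-k+\frac12-\frac{\a+2}{p}$ versus the stated $n-k-\frac{\a+2}{p}$. These discrepancies are precisely the $\frac{s}{q}$- and normalization-shifts coming from the fact that the area function for $H^s$ carries weight $(1-|z|^2)^{1}$ rather than $(1-|z|^2)^{2}$; so I would \emph{not} literally substitute, but instead rerun the proof of Theorem \ref{th3} with $T^s_\infty$ in place of $AT^s_t(\b)$ from the start.

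Concretely, for the \textbf{sufficiency} direction I would write, for $f\in AT^q_p(\a)$,
\[
\|T_g^{n,k}f\|_{H^s}^s \asymp \int_{\mathbb T}\left(\int_{\Gamma(\eta)}\bigl|(T_g^{n,k}f)^{(n)}(z)\bigr|^{2}(1-|z|^2)^{2n-1}\,dA(z)\right)^{\frac s2}|d\eta|,
\]
which by the product rule equals $\int_{\mathbb T}\bigl(\int_{\Gamma(\eta)}|f^{(k)}(z)|^2|g^{(n-k)}(z)|^2(1-|z|^2)^{2n-1}dA(z)\bigr)^{s/2}|d\eta|$. Now in case (a) ($0<s<q$, $2<p$) apply Hölder in the area integral with exponents $\frac p2$ and $\frac p{p-2}$ (this requires $p>2$, which is exactly the hypothesis), then Hölder in the $\eta$-integral with exponents $\frac qs$ and $\frac q{q-s}$, and recognize the first factor as $\|f^{(k)}(\cdot)(1-|\cdot|)^{k}\|_{T^q_p(\a)}^s\asymp\|f\|_{AT^q_p(\a)}^s$ via Lemma \ref{Tfn} and the second as $\|g^{(n-k)}\|_{AT^{qs/(q-s)}_{2p/(p-2)}(\delta)}^s$ with the correct $\delta$. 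Cases (b), (c), (d) mirror (b), (c), (d) of Theorem \ref{th3} verbatim with $t=2$, $\b=-1$: in (b) and the second subcase of (d) one uses the pointwise sup-bound on $g^{(n-k)}$ together with the embedding $AT^q_p(\a)\subset AT^q_2(\frac{2(\a+2)}{p}-2)$ (Lemma \ref{le1}); in the first subcase of (d) one uses $\|f\|_{H^s}\lesssim\|f^{(n)}\|_{A^q_\gamma}$ with $\gamma=\frac{q}{2}\cdot\!\ldots$, Lemma \ref{le2}, and the pointwise bound; in (c) one uses Lemma \ref{2.3} to pass from the area integral over $\Gamma(\eta)$ to the $|1-\bar\eta z|^{-1}$-weighted integral over $\mathbb D$ and then the $T^\infty_{2p/(p-2)}(\delta)$-norm.

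For \textbf{necessity} I would use the same randomized test functions $F_v(z)=\sum_j x_j r_j(v)\frac{(1-|a_j|^2)^L}{(1-\overline{a_j}z)^{L+\frac{\a+2}{p}}}$ with $\{x_j\}\in T^q_p(Z)$ and $L>\max\{1,\frac pq,\frac1q,\frac1p\}$, invoke Proposition \ref{2.23} to get $\|F_v\|_{AT^q_p(\a)}\lesssim\|\{x_j\}\|_{T^q_p(Z)}$, apply the boundedness assumption, integrate in $v$ and use Kahane's and Khinchine's inequalities exactly as in the proof of Theorem \ref{th3}, but now with the $H^s$-area expression ($t=2$, weight $(1-|z|^2)^{2n-1}$). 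This produces, with $w_j=|g^{(n-k)}(a_j)|(1-|a_j|^2)^{n-k-\frac{\a+2}{p}}$, the inequality $\int_{\mathbb T}\bigl(\sum_{a_j\in\Gamma(\eta)}|x_j|^2|w_j|^2\bigr)^{s/2}|d\eta|\lesssim\|T_g^{n,k}\|^s\|\{x_j\}\|_{T^q_p(Z)}^s$ after discretizing via $D(a_j,4r)$, Proposition \ref{2.7} and Fubini. Choosing $b>\max\{\frac12,\frac1s\}$, Hölder twice (as in \eqref{4.5}) and the factorization $T^{\frac{pbt}{pbt-p+t}}_{\cdot}=T^{\cdot}_{\cdot}\cdot T^{bq}_{bp}$ from Proposition \ref{2.16} with $t=2$, one lands on a duality computation: in case (a) use Proposition \ref{2.18} to identify the dual and conclude $\{|g^{(n-k)}(a_j)|(1-|a_j|^2)^{n-k-\frac{\a+2}{p}}\}\in T^{qs/(q-s)}_{2p/(p-2)}(Z)$; in (b) use Proposition \ref{2.18}/\ref{2.19}; in (c) Proposition \ref{2.20}; in (d) Proposition \ref{2.21}. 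Finally, Lemmas \ref{th1} and \ref{th2} translate each lattice condition into the stated condition on $g^{(n-k)}$ (recall that the hypothesis $(n-k)p-(\a+2)>0$ plays the role that $(n-k)pt+p(\b+2)-t(\a+2)>0$ played before, and is exactly what is needed for the exponent $\delta$ to fall in the admissible range of those lemmas).

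\medskip

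The hypothesis $(n-k)p>\a+2$ is what makes the weight $\delta$ in the conclusions sit in the range where Lemmas \ref{th1}--\ref{th2} apply; I would double-check at the end of each case that this inequality indeed guarantees $\delta>-2$ (and the analogous admissibility of the $AT_\infty$ weights). The main obstacle — and the only genuinely new point relative to Theorem \ref{th3} — is justifying rigorously the replacement of the $H^s$-norm by the weighted area expression with the weight $(1-|z|^2)^{2n-1}$ (equivalently, identifying $AT^s_\infty$ with $AT^s_2(-1)$ at the level of $n$-th derivatives), since Lemma \ref{Tfn} as stated is for $0<p<\infty$ and $\a>-2$, and here we are using it in the boundary case $p=2$, $\a=-1$; this is the classical Littlewood--Paley/area-integral identity for $H^s$ and should be cited (e.g. from \cite{pa} or \cite{z2}) rather than re-proved. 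Everything else is a routine, if lengthy, transcription of the argument for Theorem \ref{th3} with the substitution $t\rightsquigarrow 2$, $\b\rightsquigarrow -1$.
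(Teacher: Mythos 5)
Your overall strategy --- replace the $H^s$-norm by an area-integral expression in the $n$-th derivative of $T_g^{n,k}f$ and then rerun the proof of Theorem \ref{th3} with $t=2$ --- is exactly the paper's route. However, the one genuinely new ingredient is stated incorrectly in your proposal: you identify $H^s$ with $AT_2^s(-1)$ and consequently use the weight $(1-|z|^2)^{2n-1}$, whereas the classical area-function characterization (quoted in the paper as \cite[Theorem G]{pa}) gives
\[
\|T_g^{n,k}f\|_{H^s}^s \asymp \int_{\mathbb{T}}\left(\int_{\Gamma(\eta)}|f^{(k)}(z)|^2\,|g^{(n-k)}(z)|^2\,(1-|z|^2)^{2n-2}\,dA(z)\right)^{\frac{s}{2}}|d\eta|,
\]
i.e.\ the \emph{unweighted} Lusin area integral of the first derivative, which at the level of the $n$-th derivative carries $(1-|z|^2)^{2n-2}$; formally this is the endpoint $t=2$, $\beta=-2$ (note that $\|f'(\cdot)(1-|\cdot|)\|_{T_2^s(-1)}$ is a Bergman-type quantity, not comparable to $\|f\|_{H^s}$). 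This is not a harmless normalization issue: substituting $t=2$, $\beta=-2$ into Theorem \ref{th3} reproduces precisely the data of Theorem \ref{hb} --- $\delta=\bigl(n-k-1-\frac{\alpha}{p}\bigr)\frac{2p}{p-2}$, the sup-exponent $n-k-\frac{\alpha+2}{p}+\frac1s-\frac1q$, the splitting at $p=2$, and the standing hypothesis $(n-k)pt+p(\beta+2)-t(\alpha+2)>0$ turning into $(n-k)p-(\alpha+2)>0$ --- whereas your $t=2$, $\beta=-1$ version yields $\delta=\bigl(n-k-\tfrac12-\frac{\alpha}{p}\bigr)\frac{2p}{p-2}$, sup-exponents shifted by $\tfrac12$, and a different admissibility condition. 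You in fact noticed this mismatch, but instead of correcting the weight you attributed it to an unspecified ``normalization shift'' and kept $(1-|z|^2)^{2n-1}$; carried through, your transcription of the argument would establish a statement with the wrong exponents, not Theorem \ref{hb}.

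Once the identity above is taken as the starting point, the remainder of your plan (H\"older with exponents $\frac p2,\frac p{p-2}$ and $\frac qs,\frac q{q-s}$ for sufficiency, Lemmas \ref{le1}--\ref{le2} in the cases $p\le 2$ and $q<s$, and for necessity the randomized test functions with Proposition \ref{2.23}, Kahane--Khinchine, the duality/factorization Propositions \ref{2.16}--\ref{2.21} and the translation via Lemmas \ref{th1}--\ref{th2}) is exactly the omitted ``similar to Theorem \ref{th3}'' argument of the paper, and your observation that this characterization of $\|\cdot\|_{H^s}$ must be cited rather than deduced from Lemma \ref{Tfn} (whose range excludes the relevant endpoint) is correct --- but the endpoint is $\alpha=-2$, not $\alpha=-1$.
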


\begin{proof}
From \cite[Theorem G]{pa}, we have
\begin{align*}
\|T_g^{n,k}f\|^s_{H^s}\asymp\int_{\TT}\left(\int_{\Gamma(\eta)} |f^{(k)}(z)|^2|g^{(n-k)}(z)|^2(1-|z|^2)^{2n-2}dA(z) \right)^{\frac{s}{2}}|d\eta|.
\end{align*}
The proof is similar to the proof of Theorem \ref{th3}, so we omit the details here.
\end{proof}

\begin{Remark}
Using Theorem \ref{hb}, we can obtain the characterization of the boundedness of the operator $T_g^{n,k}:H^p\to H^s$, which has already been investigated by Chalmoukis in \cite{ch}.
In particular, we show that
$T_g^{n,k}:H^p\to H^s$ is bounded  if and only if $\sup\limits_{z\in\D}(1-|z|^2)^{\frac{1}{s}-\frac{1}{p}+n-k}|g^{(n-k)}(z)|<\infty$ when $0<p<s<\infty$. 
In addition, when $n=1$ and $k=0$, by Theorem \ref{hb} and the fact that
$A^p_\a=AT_p^p(\a-1)$,  we can get the characterization of the boundedness of the operator $T_g:A^p_\a\to H^s$, which has already been studied by Wu in \cite{w2} and subsequently by Miihkinen, Pau, Per\"al\"a, and Wang in \cite{mppw} in the case of the unit ball.
\end{Remark}

\section{The compactness of $T_g^{n,k}:AT_p^q(\alpha)\to AT_t^s(\beta)$}
In this section, we will study the compactness  of the operator $T_g^{n,k}:AT_p^q(\a)\to AT_t^s(\b)$.

\begin{Theorem}\label{th5}
Let $0<p,q,s,t<\infty$, $\a,\b>-2$, $g\in H(\D)$, $n\in\NN$ and $k\in\NN\cup\{0\}$ such that $0\le  k< n$ and $(n-k)pt+p(\b+2)-t(\a+2)>0$.  
Then $T_g^{n,k}:AT_p^q(\a)\to AT_t^s(\b)$ is compact if and only if $T_g^{n,k}:AT_p^q(\a)\to AT_t^s(\b)$ is bounded and one of the following conditions holds:
	
%
%
%
	
(a) If $0<s<q<\infty$, $0<t<p<\infty$, 
$g^{(n-k)}\in AT_{\frac{pt}{p-t}}^{\frac{qs}{q-s}}(\delta),$ where $\delta=\left(n-k+\frac{\b}{t}-\frac{\a}{p} \right)\frac{pt}{p-t}$.\\
	
(b) If $0<s<q<\infty$, $0<p \leq t<\infty$, 
$$
\lim_{R\to 1^{-}}\int_{\TT}\left( \sup_{{z}\in\Gamma(\eta)\backslash \ol{D(0,R)}} |g^{(n-k)}(z)|(1-|z|^2)^{n-k+\frac{\b+2}{t}-\frac{\a+2}{p}} \right)^{\frac{qs}{q-s}}|d\eta|=0.
$$
	
(c) If $0<q=s<\infty$, $0<t<p<\infty$, 
$$
\lim_{|u|\to 1^{-}}\frac{1}{1-|u|^2}\int_{ S(u)}|g^{(n-k)}(z)|^{\frac{pt}{p-t}}(1-|z|^2)^{\left(n-k+\frac{\b}{t}-\frac{\a}{p}\right){\frac{pt}{p-t}}+1}dA(z)=0.
$$
	
(d) If $0<q<s<\infty$, $0<p,t <\infty$ or $0<q=s<\infty$, $0<p\leq t<\infty$, 
$$\lim_{z\to1^-}|g^{(n-k)}(z)|(1-|z|^2)^{n-k+\frac{\b+2}{t}-\frac{\a+2}{p}+\frac{1}{s}-\frac{1}{q}}=0.$$
\end{Theorem}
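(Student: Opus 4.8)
The plan is to reduce the compactness statement to the boundedness statement via a standard limiting argument, paralleling the structure of the proof of Theorem \ref{th3}. First I would record the usual criterion: $T_g^{n,k}:AT_p^q(\a)\to AT_t^s(\b)$ is compact if and only if for every sequence $\{f_m\}$ bounded in $AT_p^q(\a)$ with $f_m\to0$ uniformly on compact subsets of $\D$, one has $\|T_g^{n,k}f_m\|_{AT_t^s(\b)}\to0$. The forward implication in cases (a) is immediate, since there the compactness condition \emph{is} the boundedness condition; for (b), (c), (d) I would argue by contradiction, taking a sequence $a_m\to\TT$ (or $u_m\to\TT$) along which the relevant quantity stays bounded away from $0$, and testing against the normalized reproducing-kernel-type functions $f_{a_m}(z)=(1-|a_m|^2)^{c}/(1-\overline{a_m}z)^{c+(\a+2)/p+k}$ with $c$ chosen large; these are bounded in $AT_p^q(\a)$, tend to $0$ uniformly on compacta, and by the estimates already used in the necessity part of Theorem \ref{th3} (together with Lemma \ref{fn} and the pointwise lower bounds for $|f_{a_m}^{(k)}|$ on $D(a_m,r)$) force $\|T_g^{n,k}f_{a_m}\|_{AT_t^s(\b)}$ to stay bounded below, a contradiction.

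For the converse (these "little-oh" conditions plus boundedness imply compactness), I would split $\D=\ol{D(0,R)}\cup(\D\setminus\ol{D(0,R)})$ and correspondingly split the tent-space integral defining $\|T_g^{n,k}f_m\|_{AT_t^s(\b)}^{\min(s,t)}$ (using Lemma \ref{Tfn} to pass to $f_m^{(k)}g^{(n-k)}$). On $\ol{D(0,R)}$ the integrand is controlled by $\sup_{|z|\le R}|g^{(n-k)}(z)|\cdot\sup_{|z|\le R}|f_m^{(k)}(z)|$, which tends to $0$ with $m$ by uniform convergence on compacta together with the Cauchy-estimate bound $|f_m^{(k)}(z)|\lesssim\|f_m\|_{AT_p^q(\a)}$ from Lemma \ref{fn}. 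On $\D\setminus\ol{D(0,R)}$ I would run exactly the H\"older-inequality chains of \eqref{aa}, \eqref{bb}, \eqref{cc}, \eqref{dd}, \eqref{ee} from the sufficiency proof of Theorem \ref{th3}, but with the norm of $g^{(n-k)}$ (or the $T^\infty$-, $T^\infty_{\frac{pt}{p-t}}$-, or $L^\infty$-quantity) replaced by its "tail" over $\D\setminus\ol{D(0,R)}$; the hypotheses in (b), (c), (d) say precisely that this tail $\to0$ as $R\to1^-$, and case (a) is handled by absolute continuity of the integral of the fixed $AT_{\frac{pt}{p-t}}^{\frac{qs}{q-s}}(\delta)$-function $g^{(n-k)}$. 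Letting first $m\to\infty$ and then $R\to1^-$ gives $\limsup_m\|T_g^{n,k}f_m\|_{AT_t^s(\b)}=0$.

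I would also note the reconciliation between the "continuous" tail conditions stated in Theorem \ref{th5} and the "lattice" formulations that naturally arise from the necessity argument: by Lemma \ref{fn}, Proposition \ref{2.7}, and the subharmonicity estimate $|g^{(n-k)}(a_j)|^p\lesssim(1-|a_j|^2)^{-(\a+2)}\int_{D(a_j,r)}|g^{(n-k)}|^p(1-|w|^2)^\a\,dA$, a sequence lies in a vanishing-tail lattice space exactly when the corresponding function has the vanishing-tail property in (b), (c), or satisfies $\lim_{z\to\TT}|g^{(n-k)}(z)|(1-|z|^2)^{\cdots}=0$ in (d); this is the same dictionary used implicitly in Theorem \ref{th3} and in Lemmas \ref{th1}, \ref{th2}. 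The main obstacle I anticipate is case (c): there the relevant space is a $T^\infty_{p/(p-t)\,\cdot t}$-type space, whose "norm" is a supremum over Carleson boxes rather than an integral, so "absolute continuity of the integral" is unavailable and one must instead exploit the hypothesis $\lim_{|u|\to1^-}\frac1{1-|u|^2}\int_{S(u)}|g^{(n-k)}|^{pt/(p-t)}(1-|z|^2)^{\delta+1}\,dA=0$ directly, splitting the Carleson-box supremum into boxes near $\TT$ (small by hypothesis) and boxes of bounded hyperbolic size away from $\TT$ (small because $f_m^{(k)}\to0$ there); getting the bookkeeping of apertures and the interplay with Lemma \ref{2.9} right in that sup-based estimate is the delicate point.
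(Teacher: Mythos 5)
Your sufficiency argument is essentially the paper's: split each tent integral over $\Gamma(\eta)\cap\ol{D(0,R)}$ and its complement, reuse the H\"older chains \eqref{aa}--\eqref{ee}, handle (a) by absolute continuity/dominated convergence of the integral of $g^{(n-k)}$, and (b), (c), (d) by the stated vanishing tails; that half is fine, including your remark that in (c) one works directly with the Carleson-box supremum. The genuine gap is in the necessity of (b) and (c). You propose to get these by contradiction, testing against single normalized kernels $f_{a_m}$. A single kernel concentrated near one boundary point can only detect a pointwise quantity, i.e.\ it yields (at best) the condition in (d), $\lim_{|a|\to1^-}|g^{(n-k)}(a)|(1-|a|^2)^{n-k+\frac{\b+2}{t}-\frac{\a+2}{p}+\frac{1}{s}-\frac{1}{q}}=0$; it cannot recover the vanishing of the $T_\infty^{\frac{qs}{q-s}}$-type tail norm in (b) (an integral over $\eta\in\TT$ of a supremum over $\Gamma(\eta)\setminus\ol{D(0,R)}$) nor the vanishing Carleson-box integral in (c), both of which involve the global distribution of $g^{(n-k)}$ rather than its size at one point. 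Moreover, these quantities are indexed by $R$ (or by $u$), not by a point $a$, so your ``sequence $a_m$ along which the quantity stays bounded below'' does not even parse for (b). The paper's necessity proof for (b), (c) is structurally different and this structure is what you are missing: it uses compactness to conclude that the image of the bounded family $F(\{x_j\})$ ($\{x_j\}$ in the unit sphere of $T_p^q(Z)$, $Z$ a lattice) is totally bounded in $AT_t^s(\b)$, covers it by finitely many $\epsilon/2$-balls, extracts from the finitely many centers a \emph{uniform} smallness of their tails over $\Gamma(\eta)\setminus\ol{D(0,R)}$, and only then runs the Rademacher/Khinchine randomization and the sequence-space duality and factorization (Propositions \ref{2.18}--\ref{2.21}) to obtain the lattice tail conditions, which are finally transferred to the continuous statements. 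Without this total-boundedness/finite-net step there is no way to convert compactness into the uniform-in-$\{x_j\}$ tail estimate that the duality argument needs.

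Two smaller points. First, your test function for (d) is written as $(1-|a_m|^2)^{c}/(1-\ol{a_m}z)^{c+\frac{\a+2}{p}+k}$; with the extra $+k$ (and without the $+\frac{1}{q}$) this family is \emph{not} bounded in $AT_p^q(\a)$ when $k\ge1$, and in any case the normalization must carry the $\frac{1}{q}$ in the exponent, as in the paper's $F_a(z)=(1-|a|^2)^{l}/(1-\ol{a}z)^{l+\frac{\a+2}{p}+\frac{1}{q}}$, or the exponent $\frac{1}{s}-\frac{1}{q}$ in (d) will not come out; the paper then applies Lemma \ref{fn} to $T_g^{n,k}F_a$ and evaluates at $z=a$, which is simpler than a lower-bound-on-$D(a,r)$ argument. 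Second, in the compactness criterion the paper argues via a normal-families extraction ($h_j=f_{n_j}-f$ with $f\in AT_p^q(\a)$ by Fatou) rather than assuming the criterion with $f_m\to0$ on compacta; this is harmless, but if you invoke the latter criterion you should at least note why it is valid in these quasi-Banach tent spaces.
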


\begin{proof}
{\bf Sufficiency.}
Consider the sequence $\{f_j\}_{j=1}^\infty\subset AT_p^q(\a)$ with the property that $\sup_j\|f_j\|_{AT_p^q(\a)}<\infty$. This implies that the set $\{f_j\}$ is uniformly bounded on compact subsets of the unit disc $\D$, and by Montel's theorem, it constitutes a normal family. Consequently, a uniformly convergent subsequence $\{f_{n_j}\}_{j=1}^\infty$ can be selected on compact subsets of $\D$ to an analytic function $f$. According to Fatou's Lemma, $f\in AT_p^q(\a)$. Let $h_j = f_{n_j} - f$, which also belongs to $AT_p^q(\a)$. The goal is to prove that $\lim_{j\to \infty}\|T_g^{n,k}h_j\|_{AT_t^s(\b)}=0$, which would establish that the operator $T_g^{n,k}:AT_p^q(\a)\to AT_t^s(\b)$ is compact.

(a) If $0<s<q<\infty$, $0<t<p<\infty$, 
%
$g^{(n-k)}\in AT_{\frac{pt}{p-t}}^{\frac{qs}{q-s}}(\delta)$, where $\delta=\left(n-k+ \frac{\b}{t}-\frac{\a}{p} \right)\frac{pt}{p-t}$. Then, by the dominated convergence theorem, for any $\epsilon>0$, there exists $R_0\in(0,1)$ such that
$$
\sup_{R\ge R_0}\left(\int_{\TT}\left( \int_{\Gamma(\eta)\backslash\ol{D(0,R_0)}} |g^{(n-k)}(z)|^{\frac{pt}{p-t}}(1-|z|^2)^{\delta}dA(z) \right)^{\frac{(p-t)qs}{pt(q-s)}}|d\eta|\right)^{\frac{q-s}{qs}}<\epsilon.
$$
Noting that $|h_j(z)|\to 0$ uniformly on any compact subsets of $\D$, we can choose $j_0$ large enough such that $\sup_{j\ge j_0,|z|\le R_0}|h_j^{(k)}(z)|<\epsilon$. Using $(\ref{aa})$, when $j\ge j_0$, we have
\begin{align*}
\|T_g^{n,k}h_j\|_{AT_t^s(\b)}^s\lesssim&\int_{\mathbb{T}}\left( \int_{\Gamma(\eta)\cap\{|z|\le R_0\}}|h_j^{(k)}(z)|^t|g^{(n-k)}(z)|^t(1-|z|^2)^{nt+\b}dA(z)  \right)^{\frac{s}{t}}|d\eta|\\
&+\int_{\mathbb{T}}\left( \int_{\Gamma(\eta)\backslash\ol{D(0,R_0)}}|h_j^{(k)}(z)|^t|g^{(n-k)}(z)|^t(1-|z|^2)^{nt+\b}dA(z)  \right)^{\frac{s}{t}}|d\eta|\\
\lesssim&\epsilon^s+\|h_j\|_{AT_p^q(\a)}^s\left(\int_{\mathbb{T}} \left(\int_{\Gamma(\eta)\backslash\ol{D(0,R_0)}}|g^{(n-k)}(z)|^{\frac{pt}{p-t}}(1-|z|^2)^\delta dA(z) \right)^{\frac{(p-t)qs}{pt(q-s)}}|d\eta|  \right)^{\frac{q-s}{q}}\\
\lesssim& \epsilon^s.
\end{align*}

(b) If $0<s<q<\infty$, $0<p \leq t<\infty$, the assumption
implies that for any $\epsilon>0$, there exists $R_0\in(0,1)$ such that
$$
\sup_{R\ge R_0}\left(\int_{\TT}\left( \sup_{z\in\Gamma(\eta)\backslash \ol{D(0,R)}} |g^{(n-k)}(z)|(1-|z|^2)^{n-k+\frac{\b+2}{t}-\frac{\a+2}{p}} \right)^{\frac{qs}{q-s}}|d\eta|\right)^{\frac{q-s}{qs}}<\epsilon.
$$
Choose $j_0$ such that $\sup_{j\ge j_0,|z|\le R_0}|h_j^{(k)}(z)|<\epsilon$. Using $(\ref{bb})$, when $j\ge j_0$, we get
\begin{align*}
\|T_g^{n,k}h_j\|_{AT_t^s(\b)}^s\lesssim&\int_{\mathbb{T}}\left( \int_{\Gamma(\eta)\cap\{|z|\le R_0\}}|h_j^{(k)}(z)|^t|g^{(n-k)}(z)|^t(1-|z|^2)^{nt+\b}dA(z)  \right)^{\frac{s}{t}}|d\eta|\\
&+\int_{\mathbb{T}}\left( \int_{\Gamma(\eta)\backslash\ol{D(0,R_0)}}|h_j^{(k)}(z)|^t|g^{(n-k)}(z)|^t(1-|z|^2)^{nt+\b}dA(z)  \right)^{\frac{s}{t}}|d\eta|\\
\lesssim&\epsilon^s+ \|h_j\|_{AT_p^q(\a)}^s  \left(\int_{\TT}  \left(\sup_{z\in \Gamma(\eta)\backslash\ol{D(0,R_0)}}|g^{(n-k)}(z)|(1-|z|^2)^{n-k+\frac{\b+2}{t}-\frac{\a+2}{p}}  \right)^{\frac{qs}{q-s}}|d\eta| \right)^{\frac{q-s}{q}} \\
\lesssim&\epsilon^s. 
\end{align*}

(c) If $0<q=s<\infty$, $0<t<p<\infty$, 
the assumption implies that
$$
\lim_{R\to 1^{-}}\sup_{u\in\D}\frac{1}{1-|u|^2}\int_{S(u)\backslash\ol{D(0,R)}}|g^{(n-k)}(z)|^{\frac{pt}{p-t}}(1-|z|^2)^{\left( n-k+\frac{\b}{t}-\frac{\a}{p} \right)\frac{pt}{p-t}+1}dA(z)=0.
$$
Hence, for any $\epsilon>0$, there exists $R_0\in(0,1)$ such that
$$
\sup_{u\in\D,R\ge R_0}\frac{1}{1-|u|^2}\int_{S(u)\backslash\ol{D(0,R)}}|g^{(n-k)}(z)|^{\frac{pt}{p-t}}(1-|z|^2)^{\left( n-k+\frac{\b}{t}-\frac{\a}{p} \right)\frac{pt}{p-t}+1}dA(z)<\epsilon.
$$
Choose $j_0$ such that $\sup_{j\ge j_0,|z|\le R_0}|h_j^{(k)}(z)|<\epsilon$. Using $(\ref{cc})$, when $j\ge j_0$, we obtain
\begin{align*}
 \|T_g^{n,k}h_j\|_{AT_t^s(\b)}^s 
\asymp&\int_{\mathbb{T}}\left( \int_{\Gamma(\eta)\cap\{|z|\le R_0\}}|h_j^{(k)}(z)|^t|g^{(n-k)}(z)|^t(1-|z|^2)^{nt+\b}dA(z)  \right)^{\frac{s}{t}}|d\eta|\\
&+\int_{\mathbb{T}}\left( \int_{\Gamma(\eta)\backslash\ol{D(0,R_0)}}|h_j^{(k)}(z)|^t|g^{(n-k)}(z)|^t(1-|z|^2)^{nt+\b}dA(z)  \right)^{\frac{s}{t}}|d\eta|\\
\lesssim&\epsilon^s+ \|h_j\|_{AT_p^q(\a)}^s\\
&\cdot
\left( \sup_{u\in\D,R\ge R_0}\frac{1}{1-|u|^2}\int_{S(u)\backslash\ol{D(0,R)}}|g^{(n-k)}(z)|^{\frac{pt}{p-t}}(1-|z|^2)^{\left( n-k+\frac{\b}{t}-\frac{\a}{p} \right)\frac{pt}{p-t}+1}dA(z)    \right)^\frac{s(p-t)}{pt}      \\
\lesssim&\epsilon^s. 
\end{align*} 

(d) If $0<q<s<\infty$, $0<p,t <\infty$ or $0<q=s<\infty$, $0<p\leq t<\infty$, the assumption implies that 
for any $\epsilon>0$, there exists $R_0\in(0,1)$ such that
$|g^{(n-k)}(z)|(1-|z|^2)^{n-k+\frac{\b+2}{t}-\frac{\a+2}{p}+\frac{1}{s}-\frac{1}{q}}<\epsilon$ for any $|z|\ge R_0$.
Choose $j_0$ such that $\sup_{j\ge j_0,|z|\le R_0}|h_j^{(k)}(z)|<\epsilon$.
Assume first that $0<q<s<\infty$, $0<p,t <\infty$, there exists $r$ such that $q<r<s$,
by $(\ref{dd})$ and Lemma \ref{le2}, when $j\ge j_0$, we get
\begin{equation*}
\begin{aligned}
\|T_g^{n,k}h_j\|_{AT_t^s(\b)}^s\lesssim&\|(T_g^{n,k}h_j)^{(n)}\|^s_{A^r_{\frac{r(\b+2)}{t}+nr+\frac{r}{s}-2}}\\
\asymp&\left( \int_\D|h_j^{(k)}(z)|^r|g^{(n-k)}(z)|^r(1-|z|^2)^{\frac{r(\b+2)}{t}+nr+\frac{r}{s}-2} dA(z) \right)^{\frac{s}{r}}\\
\lesssim&\left( 
\int_{|z|\le R_0}|h_j^{(k)}(z)|^r|g^{(n-k)}(z)|^r(1-|z|^2)^{\frac{r(\b+2)}{t}+nr+\frac{r}{s}-2} dA(z)
\right)^{\frac{s}{r}}\\
&+\left( 
\int_{R_0<|z|<1}|h_j^{(k)}(z)|^r|g^{(n-k)}(z)|^r(1-|z|^2)^{\frac{r(\b+2)}{t}+nr+\frac{r}{s}-2} dA(z)
\right)^{\frac{s}{r}}\\
\lesssim&\epsilon^s+\epsilon^s\left(\int_\DD |h_j^{(k)}(z)|^r(1-|z|^2)^{\frac{r(\a+2)}{p}+kr+\frac{r}{q}-2}dA(z)\right)^{\frac{s}{r}}\\
\lesssim&\epsilon^s. 
\end{aligned}
\end{equation*}
Next, suppose that $0<q=s<\infty$, $0<p\leq t<\infty$,
by $(\ref{ee})$ and Lemma \ref{le1}, when $j\ge j_0$, we get
\begin{align*}
\|T_g^{n,k}h_j\|_{AT_t^s(\b)}^s\lesssim&\int_{\mathbb{T}}\left( \int_{\Gamma(\eta)\cap\{|z|\le R_0\}}|h_j^{(k)}(z)|^t|g^{(n-k)}(z)|^t(1-|z|^2)^{nt+\b}dA(z)  \right)^{\frac{s}{t}}|d\eta|\\
&+\int_{\mathbb{T}}\left( \int_{\Gamma(\eta)\backslash\{|z|\le R_0\}}|h_j^{(k)}(z)|^t|g^{(n-k)}(z)|^t(1-|z|^2)^{nt+\b}dA(z)  \right)^{\frac{s}{t}}|d\eta|\\
\lesssim&\epsilon^s+\epsilon^s\int_{\mathbb{T}} \left(\int_{\Gamma(\eta)}|h_j^{(k)}(z)|^t(1-|z|^2)^{kt+\frac{t(\a+2)}{p}-2}dA(z) \right)^{\frac{s}{t}}|d\eta|  \\
\lesssim&\epsilon^s. 
\end{align*}
Thus, $T_g^{n,k}:AT_p^q(\a)\to AT_t^s(\b)$ is compact.

{\bf Necessity.}  Assume that $T_g^{n,k}:AT_p^q(\a)\to AT_t^s(\b)$ is compact.  It is evident that $T_g^{n,k}:AT_p^q(\a)\to AT_t^s(\b)$ is bounded and $(a)$ is true according to Theorem \ref{th3}. Thus, we only need to prove that $(b)$, $(c)$, and $(d)$ hold.
Let $Z=\{a_j\}$ be an $(r, \kappa)$-lattice and $E=\left\{\{x_j\}\in T_p^q(Z):\|\{\{x_j\}\|_{T_p^q(Z)}=1  \right\}$.  For $L>\max \{1, \frac{p}{q}, \frac{1}{q}, \frac{1}{p}\}$, set
$$
F(\{x_j\})(z)=\sum_{j=1}^\infty x_j  \frac{(1-|a_j|^2)^{L}}{(1-\overline{a_j} z)^{L+\frac{\a+2}{p}}}, \quad z \in \mathbb{D}.
$$
Using Proposition \ref{2.23}, we get $\|F(\{x_j\})\|_{T_p^q(\a)}\lesssim \|\{x_j\}\|_{T_p^q(Z)}$.
Since $T_g^{n,k}$ is compact and $F(\{x_j\})$ is a bounded set, the set $\{T_g^{n,k}\circ F(\{x_j\}):x_j\in E \}$ is relatively compact 
and hence a totally bounded set in $T_t^s(\b)$. Thus, for any $\epsilon>0$,
there exist a finite number of functions $h_1,\cdots,h_N$, such that $T_g^{n,k}\circ F\subset \bigcup_{i=1}^{N}D\left(h_i,\frac{\epsilon}{2}\right)$, where $$D\left(h,\frac{\epsilon}{2}\right)=\left\{ f\in T_g^{n,k}\circ F:\|f-h\|_{T_t^s(\b)}<\frac{\epsilon}{2} \right\}.$$
Noting that $\sup_{i=1,\cdots,N}\|h_i^{(n)}\|_{T_t^s(\b)}<\infty$, for the above $\epsilon>0$, there exists $R_0\in(0,1)$ such that
$$
\sup_{i=1,\cdots,N}\left( \int_{\TT}\left(   \int_{\Gamma(\eta)\backslash\ol{D(0,R)}} |h_i^{(n)}(z)|^t(1-|z|^2)^{nt+\b}dA(z)   \right)^{\frac{s}{t}} |d\eta|\right)^{\frac{1}{s}}<\frac{\epsilon}{2}
$$ 
for all $R> R_0$. Thus, for any $
\{x_j\}\in E$, there exists some $i_0\in\{1,\cdots,N\}$ such that $T_g^{n,k}\circ F\in D\left(h_{i_0},\frac{\epsilon}{2}\right)$. Therefore,
\begin{align*}
&\left( \int_{\TT}\left(   \int_{\Gamma(\eta)\backslash\ol{D(0,R)}} |(T_g^{n,k}F)^{(n)}(z)|^t(1-|z|^2)^{nt+\b}dA(z)  \right)^{\frac{s}{t}} |d\eta|\right)^{\frac{1}{s}}\\
=&\left( \int_{\TT}\left(   \int_{\Gamma(\eta)\backslash\ol{D(0,R)}} |F^{(k)}(z) g^{(n-k)}(z)|^t(1-|z|^2)^{nt+\b}dA(z)  \right)^{\frac{s}{t}} |d\eta|\right)^{\frac{1}{s}}\\
\lesssim& \left( \int_{\TT}\left(   \int_{\Gamma(\eta)\backslash\ol{D(0,R)}} |F^{(k)}(z)g^{(n-k)}(z)-h_{i_0}^{(n)}(z)|^t(1-|z|^2)^{nt+\b}dA(z)  \right)^{\frac{s}{t}} |d\eta|\right)^{\frac{1}{s}}\\
&+\left( \int_{\TT}\left(   \int_{\Gamma(\eta)\backslash\ol{D(0,R)}} |h_{i_0}^{(n)}(z)|^t(1-|z|^2)^{nt+\b}dA(z)   \right)^{\frac{s}{t}} |d\eta|\right)^{\frac{1}{s}}\\
\lesssim&\|T_g^{n,k}\circ F-h_{i_0}\|_{T_t^s(\b)}+\frac{\epsilon}{2}<\epsilon
\end{align*}
for all $R> R_0$, that is equivalent to
\begin{align*}
&\int_\mathbb{T}\left(\int_{\Gamma(\eta)\backslash\ol{D(0,R)}}\left(\sum_j|x_j|^2 \frac{(1-|a_j|^2)^{2 L}}{|1-\overline{a_j} z|^{2 L+2k+\frac{2(\a+2)}{p}}}\right)^{\frac{t}{2}} |g^{n-k}(z)|^t
(1-|z|^2)^{nt+\b} dA(z)\right)^{\frac{s}{t}}|d \eta| \\
\lesssim& \epsilon^s\|\{x_j\}\|_{T_p^q(Z)}^s.
\end{align*}
for any $\{x_j\}\in T_p^q(Z)$ and $R> R_0$. 
Let $r_j(v)$ be the Rademacher functions. Replacing $x_j$ by $x_j r_j(v)$, and by applying the same technique as in Theorem \ref{th3}, we obtain
\begin{align*}
\int_\mathbb{T}\left(\sum_{a_j \in \Gamma(\eta)}|x_j|^t |g^{(n-k)}(a_j)|^t(1-|a_j|^2)^{nt-kt+\b+2-\frac{t(\a+2)}{p}}\cdot\chi_{\{|z|\ge R\}}\right)^{\frac{s}{t}}|d \eta|
\lesssim \epsilon^s \|\{x_j\}\|_{T_p^q(Z)}^s
\end{align*}
for $R>R_0'=\inf\left\{|a_j|:D(a_j,\rho)\subset\{|z|\ge R_0\}\right\}$, 
where $\chi_{\{|z|\ge R\}}$ is the characteristic function.
	
(b)  If $0<s<q<\infty$, $0<p \leq t<\infty$, using the principles of duality and factorization for sequence tent spaces, as outlined in Theorem \ref{th3}, we deduce
\begin{align*}
\int_{\TT} \sup_{{a_j}\in\Gamma(\eta)\backslash \ol{D(0,R)}} |g^{(n-k)}(a_j)|^{\frac{qs}{q-s}}(1-|a_j|^2)^{(n-k+\frac{\b+2}{t}-\frac{\a+2}{p})\frac{qs}{q-s}}  |d\eta| \lesssim \epsilon^s
\end{align*}
for all $R> R_0'$.
Therefore, 
\begin{align*}
&\int_{\TT} \sup_{{z}\in\Gamma(\eta)\backslash \ol{D(0,R)}} |g^{(n-k)}(z)|^{\frac{qs}{q-s}}(1-|z|^2)^{(n-k+\frac{\b+2}{t}-\frac{\a+2}{p})\frac{qs}{q-s}}  |d\eta| \\
\lesssim&\int_{\TT} \sup_{{a_j}\in\widetilde{\Gamma}(\eta)\backslash \ol{D(0,R)}} |g^{(n-k)}(a_j)|^{\frac{qs}{q-s}}(1-|a_j|^2)^{(n-k+\frac{\b+2}{t}-\frac{\a+2}{p})\frac{qs}{q-s}}  |d\eta| \lesssim \epsilon^s.
\end{align*}
	
(c) If $0<q=s<\infty$, $0<t<p<\infty$, 
similar to the proof of Theorem \ref{th3}, we have
$$
\sup_{u\in\D}\frac{1}{1-|u|^2}\sum_{a_j\in S(u),|a_j|\ge R}\left(|g^{(n-k)}(a_j)|(1-|a_j|^2)^{n-k+\frac{\b+2}{t}-\frac{\a+2}{p}}\right)^{\frac{pt}{p-t}}(1-|a_j|^2)\lesssim\epsilon^s
$$
for all $R> R_0'$.
Then
$$
\sup_{u\in\D,R> R_0'}\frac{1}{1-|u|^2}\int_{S(u)\backslash\ol{D(0,R)}}|g^{(n-k)}(z)|^{\frac{pt}{p-t}}(1-|z|^2)^{\left( n-k+\frac{\b}{t}-\frac{\a}{p} \right)\frac{pt}{p-t}+1}dA(z)\lesssim\epsilon^s,
$$
which implies that
$$
\lim_{|u|\to 1^{-}}\frac{1}{1-|u|^2}\int_{ S(u)}|g^{(n-k)}(z)|^{\frac{pt}{p-t}}(1-|z|^2)^{\left(n-k+\frac{\b}{t}-\frac{\a}{p}\right){\frac{pt}{p-t}}+1}dA(z)=0.
$$
	
(d) If $0<q<s<\infty$, $0<p,t <\infty$ or $0<q=s<\infty$, $0<p\leq t<\infty$, for $l>0$ and $a\in \D$, define
$$
F_a(z)=\frac{(1-|a|^2)^l}{(1-\ol{a}z)^{l+\frac{\a+2}{p}+\frac{1}{q}}},\quad z\in \D.
$$
Observing that $F_a$ converges to zero  uniformly on any compact subsets of $\D$ as $|a|\to1^-$. The compactness of $T_g^{n,k}$ implies that 
$$
\lim_{|a|\to 1^-}\|T_g^{n,k} F_a\|_{AT_t^s(\b)}=0.
$$
By Lemma \ref{fn}, we obtain
$$
|F_a^{(k)}(z)g^{(n-k)}(z)|\lesssim\frac{\|(T_g^{n,k}F_a)^{(n)}\|_{AT_t^s(\b)}}{(1-|z|^2)^{\frac{\b+2}{t}+\frac{1}{s}+n}}   \quad\text{for all} ~z\in\D.
$$
Replacing $z$ by $a$ in the inequality above, we have
$$
\lim_{|a|\to1^-}|g^{(n-k)}(a)|(1-|a|^2)^{n-k+\frac{\b+2}{t}-\frac{\a+2}{p}+\frac{1}{s}-\frac{1}{q}}=0.
$$
The proof is complete.
\end{proof}

\begin{Theorem}
Let $0<p,q,s<\infty$, $\a>-2$, $g\in H(\D)$, $n\in\NN$  and $k\in\NN\cup\{0\}$ such that $0\le  k< n$ and $(n-k)p-(\a+2)>0$. 
Then $T_g^{n,k}:AT_p^q(\a)\to H^s$ is compact if and only if $T_g^{n,k}:AT_p^q(\a)\to H^s$ is bounded and one of the following conditions holds:

(a) If $0<s<q<\infty$, $2<p<\infty$, 
$g^{(n-k)}\in AT_{\frac{2p}{p-2}}^{\frac{qs}{q-s}}(\delta),$ where $\delta=\left(n-k -1-\frac{\a}{p} \right)\frac{2p}{p-2}$.
	
(b) If $0<s<q<\infty$, $0<p \leq 2$, 
$$
\lim_{R\to 1^{-}}\int_{\TT}\left( \sup_{{z}\in\Gamma(\eta)\backslash \ol{D(0,R)}} |g^{(n-k)}(z)|(1-|z|^2)^{n-k-\frac{\a+2}{p}} \right)^{\frac{qs}{q-s}}|d\eta|=0.
$$
	
(c) If $0<q=s<\infty$, $2<p<\infty$,
$$
\lim_{|u|\to 1^{-}}\frac{1}{1-|u|^2}\int_{ S(u)}|g^{(n-k)}(z)|^{\frac{2p}{p-2}}(1-|z|^2)^{\left(n-k-1-\frac{\a}{p}\right){\frac{2p}{p-2}}+1}dA(z)=0.
$$
	
(d) If $0<q<s<\infty$, $0<p<\infty$ or $0<q=s<\infty$, $0<p\leq 2$,  
$$\lim_{z\to1^-}|g^{(n-k)}(z)|(1-|z|^2)^{n-k-\frac{\a+2}{p}+\frac{1}{s}-\frac{1}{q}}=0.$$
\end{Theorem}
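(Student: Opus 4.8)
My plan is to reduce the whole statement to the already-proved Theorem \ref{th5} by exploiting the identification $H^s=AT^s_\infty$ together with the norm equivalence
\[
\|T_g^{n,k}f\|^s_{H^s}\asymp\int_{\TT}\left(\int_{\Gamma(\eta)} |f^{(k)}(z)|^2|g^{(n-k)}(z)|^2(1-|z|^2)^{2n-2}\,dA(z)\right)^{\frac{s}{2}}|d\eta|,
\]
which is \cite[Theorem G]{pa} and was already used in the proof of Theorem \ref{hb}. The right-hand side is exactly the quantity controlling $\|T_g^{n,k}f\|_{AT_t^s(\b)}$ throughout the proof of Theorem \ref{th5} after the formal substitution $t=2$, $\b=-2$: under it the standing hypothesis $(n-k)pt+p(\b+2)-t(\a+2)>0$ becomes $(n-k)p-(\a+2)>0$, the exponents $n-k+\frac{\b+2}{t}-\frac{\a+2}{p}$ and $\delta=\big(n-k+\frac{\b}{t}-\frac{\a}{p}\big)\frac{pt}{p-t}$ become $n-k-\frac{\a+2}{p}$ and $\big(n-k-1-\frac{\a}{p}\big)\frac{2p}{p-2}$, and the dichotomies $t<p$, $p\le t$ become $2<p$, $p\le 2$. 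So it is enough to check that each step of the proof of Theorem \ref{th5} survives this boundary substitution.

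For sufficiency I would rerun the estimates (\ref{aa})--(\ref{ee}) with $t=2$, $\b=-2$, splitting every integral over $\Gamma(\eta)$ into its part in $\{|z|\le R_0\}$ and its part in $\Gamma(\eta)\setminus\overline{D(0,R_0)}$, exactly as in Theorem \ref{th5}. In cases (a) and (c) the hypothesis $p>2$ makes $\frac p2>1$, so H\"older's inequality with exponents $\frac p2$ and $\frac qs$ (respectively $\frac p2$ and the supremum norm) applies verbatim; in cases (b) and (d) one replaces that use of H\"older by Lemma \ref{le1} (and by Lemma \ref{le2} when $q<s$). The tail is then absorbed by the relevant decay hypothesis in (a)--(d), while on $\{|z|\le R_0\}$ one uses that $h_j^{(k)}\to 0$ uniformly on compacta, producing $\|T_g^{n,k}h_j\|_{H^s}^s\lesssim \epsilon^s$ for large $j$ and hence compactness.

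For necessity, compactness forces boundedness, so (a) is immediate from Theorem \ref{hb}, and it remains to obtain (b), (c), (d). Here I would copy the lattice and test-function argument of Theorem \ref{th5}: fix an $(r,\kappa)$-lattice $Z=\{a_j\}$, set $F(\{x_j\})(z)=\sum_j x_j(1-|a_j|^2)^L(1-\overline{a_j}z)^{-L-\frac{\a+2}{p}}$ (bounded from $T_p^q(Z)$ to $AT_p^q(\a)$ by Proposition \ref{2.23}), use total boundedness of $\{T_g^{n,k}\circ F(\{x_j\})\}$ in $H^s$ to extract a finite $\frac{\epsilon}{2}$-net whose $n$-th derivatives have uniformly small tail $H^s$-integrals, and pass via the Rademacher functions, Khinchine's inequality, Fubini, Proposition \ref{2.7} and H\"older to the tail sequence estimate
\[
\int_\mathbb{T}\Big(\sum_{a_j\in\Gamma(\eta),\,|a_j|\ge R}|x_j|^2|g^{(n-k)}(a_j)|^2(1-|a_j|^2)^{2n-2k-\frac{2(\a+2)}{p}}\Big)^{\frac s2}|d\eta|\lesssim\epsilon^s\|\{x_j\}\|_{T_p^q(Z)}^s .
\]
Applying the duality Propositions \ref{2.18}--\ref{2.21} (with $\frac{pt}{p-t}=\frac{2p}{p-2}$ replaced by $\infty$ exactly when $p\le 2$) and the factorization Proposition \ref{2.16} to this estimate then gives (b), (c) and (d) in the stated vanishing forms. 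For (d) it is cleaner to argue directly with the test functions $F_a(z)=(1-|a|^2)^l(1-\overline{a}z)^{-l-\frac{\a+2}{p}-\frac1q}$, which tend to $0$ uniformly on compacta as $|a|\to1^-$, combined with the pointwise bound $|h^{(n)}(z)|\lesssim\|h\|_{H^s}(1-|z|^2)^{-n-\frac1s}$ (the $p=\infty$ analogue of Lemma \ref{fn}) evaluated at $z=a$.

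The one place deserving genuine care -- and the part I would actually write out -- is that the boundary values $t=2$ and $\b=-2$ lie outside the hypothesis $\b>-2$ of Theorem \ref{th5}, so one must verify that every auxiliary result invoked along the way (\cite[Theorem G]{pa}, Lemmas \ref{le1}, \ref{le2}, \ref{fn}, and the duality and factorization Propositions) is still applicable at these endpoint parameters, and that the exponents remain admissible (in particular $\frac{2p}{p-2}$ is finite precisely because $p>2$ in cases (a) and (c)). Once this bookkeeping is in place the argument is word for word that of Theorem \ref{th5}.
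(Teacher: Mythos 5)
Your proposal is correct and follows exactly the route the paper intends: the paper itself omits the proof with the remark that it is ``similar to the previous theorem,'' i.e.\ one combines the area-function identity $\|T_g^{n,k}f\|_{H^s}^s\asymp\int_{\TT}(\int_{\Gamma(\eta)}|f^{(k)}|^2|g^{(n-k)}|^2(1-|z|^2)^{2n-2}dA)^{s/2}|d\eta|$ from \cite[Theorem G]{pa} with the argument of Theorem \ref{th5} under the formal substitution $t=2$, $\b=-2$. Your additional remark that the endpoint $\b=-2$ falls outside the stated hypothesis $\b>-2$ and therefore requires checking each auxiliary lemma is a worthwhile piece of care that the paper glosses over.
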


\begin{proof}
The proof of this theorem is similar to the previous theorem and is omitted here.
\end{proof}

	{\bf Data Availability}  No data were used to support this study.\msk
	
	{\bf Conflicts of Interest}  The authors  declare that they have no conflicts of interest.\msk
	
{\bf Authorship contribution statement } Rong Yang and Lian Hu: Writing – review \& editing. Songxiao Li: Writing – original draft, Conceptualization, Supervision. All authors reviewed the manuscript.\msk 

	{\bf Acknowledgements} 
	The corresponding author was supported by  NNSF of China (No. 12371131), STU Scientific Research Initiation Grant(No. NTF23004), 
Li Ka-Shing Foundation (No. 2024 LKSFG06),  Guangdong Basic and Applied Basic Research Foundation (No. 2023A1515010614).\\
	

\end{document}